\newcommand{\sfrac}[2]{{\textstyle\frac{#1}{#2}}}
\newcommand{\Ex}{\mathbb{E}}
 \renewcommand{\Pr}{{\mathbb{P}}}
 \renewcommand{\SS}{\mathcal S}
 \newcommand{\FF}{\mathcal F}
\newcommand{\Ints}{{\mathbb{Z}}}
 \newcommand{\eps}{\varepsilon}
 \newcommand{\bX}{\mathbf X}
 \newcommand{\bY}{\mathbf Y}
 \newcommand{\bZ}{\mathbf Z}
 \newcommand{\bt}{\mathbf t}
 \newcommand{\tree}{\mathbb{T}}
 \newcommand{\Reals}{{\mathbb{R}}}
 \newcommand\CTCS{\operatorname{CTCS}}
\newcommand\DTCS{\mathrm{DTCS}}
\newtheorem{Lemma}{Lemma}
\newtheorem{Theorem}[Lemma]{Theorem}
\newtheorem{Proposition}[Lemma]{Proposition}
\newtheorem{Remark}[Lemma]{Remark}
\newtheorem{OP}{Open Problem}
\newtheorem{Ansatz}[Lemma]{Ansatz}
\newenvironment{romenumerate}[1][-10pt]{% optional argument changes indentation
\addtolength{\leftmargini}{#1}\begin{enumerate}% gives (i), (ii) etc.
 }{\end{enumerate}}
 \newcommand{\qed}{\ \ \rule{1ex}{1ex}}
 \newcommand{\var}{\mathrm{var}}
 \renewcommand{\dh}{\bar{h}}%{\mathrm{d-h}}
 \newcommand{\dl}{\bar{l}}%{\mathrm{dl}}
\newcommand\nn{^{(n)}}
\newcommand\nnn{^{[n]}}
\newcommand\bbN{\mathbb N}
\newcommand\set[1]{\ensuremath{\{#1\}}}
\newcommand\setn{\set{1,\dots,n}}
\newcommand\bigabs[1]{\bigl\lvert#1\bigr\rvert}
\newcommand\KX{K}
\newcommand\XP{P}
\newcommand\ntoo{\ensuremath{{n\to\infty}}}
\newcommand\oi{\ensuremath{[0,1]}}
\newcommand\oio{\ensuremath{[0,1)}}
\newcommand\cXt{\XP_{t,1}}
\newcommand\gG{\Gamma}
\newcommand\cL{{\mathcal L}}
\newcommand\bigpar[1]{\bigl(#1\bigr)}
\newcommand\Bigpar[1]{\Bigl(#1\Bigr)}
\newcommand\intoo{\int_0^\infty}
\newcommand\dd{\,\mathrm{d}}
\newcommand\ddx{\mathrm{d}}
\newcommand\eqd{\overset{\mathrm{d}}{=}}
\newcommand\intoi{\int_0^1}
\newcommand\qw{^{-1}}
\newcommand\Exp{\operatorname{Exp}}
\newcommand\chpsi{\tilde\psi}
\newcommand\chnu{\tilde\nu}
\newcommand\chf{\tilde{f}}
\newcommand\chg{\tilde{g}}
\newcommand{\sumj}{\sum_{j=1}^\infty}
\newcommand{\sumk}{\sum_{k=1}^\infty}
\newcommand\gsx{\gs_*}
\newcommand\gLX{\gL^*}
\newcommand\gd{\delta}
\newcommand\gam{\gamma}
\newcommand\gl{\lambda}
\newcommand\gL{\Lambda}
\newcommand\gs{\sigma}
\newcommand\gU{\Upsilon}
\newcommand{\tend}{\longrightarrow}
\newcommand\pto{\overset{\mathrm{p}}{\tend}}
\newcommand\asto{\overset{\mathrm{a.s.}}{\tend}}
\newcommand\lrabs[1]{\left\lvert#1\right\rvert}
\newcommand\ttoo{\ensuremath{{t\to\infty}}}
\newcommand\marginal[1]{\marginpar[\raggedleft\tiny #1]{\raggedright\tiny#1}}
\newcommand\SJ{\marginal{SJ} }
\newcommand\xsigma{\bar\sigma}
\begin{document}

\title{The Critical Beta-splitting Random Tree II: Overview and Open Problems} 
 \author{David J. Aldous\thanks{Department of Statistics,
 367 Evans Hall \#\  3860,
 U.C. Berkeley CA 94720;  aldousdj@berkeley.edu;
  www.stat.berkeley.edu/users/aldous.}
 \and
 Svante Janson\thanks{Department of Mathematics, Uppsala University, P.O.Box 480, SE-751 06 Uppsala Sweden; 
   svante.janson@math.uu.se; www2.math.uu.se/{\tiny$\sim$}svantejs.
 }}

\date{April 17, 2025} %last edited by DA

\newpage
 \maketitle
 
 %{\tt Comment for arXiv: Expansion and revision of previous version to give current overview of active topic. 
 %Not intended for journal publication in this format.}
 
\vspace*{-0.2in}
 
 \begin{abstract} 
 In the critical beta-splitting model of a random $n$-leaf rooted tree, clades are recursively (from the root) split into sub-clades, and a clade of $m$ leaves is split into sub-clades 
 containing  $i$ and $m-i$ leaves  with probabilities $\propto 1/(i(m-i))$. 
 Study of structure theory and explicit quantitative aspects of this model (in discrete or continuous versions) is an active research topic.
 For many results there are different proofs, probabilistic or analytic, so the model provides a testbed for  a ``compare and contrast" discussion of techniques. 
 This article provides an overview of results proved in the sequence of similarly-titled articles I, III, IV 
 \cite{beta1,beta3-arxiv,beta4-arxiv} and  related articles \cite{HDchain,iksanovHD,iksanovCLT,kolesnik}.
 We mostly do not repeat proofs given elsewhere: instead we seek to paint a ``Big Picture" via graphics and heuristics, and emphasize open problems.
 
Our discussion is centered around three categories of results. 

  \noindent
  (i) There is a CLT for leaf heights, and the analytic proofs can be extended to provide surprisingly precise analysis of other height-related aspects.
  
    \noindent
  (ii) There is an explicit description of the limit {\em fringe distribution} relative to a random leaf, whose graphical representation  is essentially the format of the cladogram representation of biological phylogenies.

  \noindent
 (iii) There is a canonical embedding of the discrete model into a continuous-time model, that is a random tree  $\CTCS(n)$ on $n$ leaves with real-valued edge lengths, and this model
 turns out more convenient to study. 
 The family $(\CTCS(n), n \ge 2)$ is consistent under a ``delete random leaf and prune" operation.
  That leads to an explicit inductive construction of $(\CTCS(n), n \ge 2)$  as $n$ increases, and then to a limit structure $\CTCS(\infty)$ formalized via exchangeable partitions, in some ways analogous to the Brownian continuum random tree.
  
   Many open problems remain, in particular to elucidate a relation between $\CTCS(\infty)$ and the $\beta(2,1)$ coalescent.

 \end{abstract}
 
\tableofcontents

 \newpage
\section{Introduction}
This article describes the current state of active research on a certain random tree model. 
The model arose as a toy model for phylogenetic trees, designed to mimic the uneven splits observed in real world examples  (see Section \ref{sec:motive}).
The model turns out to have a rich mathematical structure.
There are many questions one can ask (in addition to those suggested by the
phylogenetic context),  and many different proof techniques can be
exploited. 
This article is centered around 
three foundational results: 
Theorems \ref{TNorm} (CLT for leaf heights),
\ref{T:alimit} (occupation measure),
and \ref{T:consistent} (the consistency property),
and indeed each has several different proofs, 
probabilistic or analytic, so the model provides a testbed for  a ``compare and contrast" discussion of techniques. 

We will re-state most of the main results from the technical articles  \cite{beta1,beta3-arxiv,beta4-arxiv}, but mostly without proofs.
Instead we give several ``illustrative proofs" to illustrate a proof technique, and we give several ``alternative proofs" not  published elsewhere,
to reinforce the ``compare and contrast" theme.

Open problems\footnote{Some are ``open" merely because we have not thought about them.} are noted throughout, and enumerated separately as Open Problems 1 - \ref{OP:symmetry}.

 \section{The critical beta-splitting model of random trees}
  \label{sec:tree}
For $m \ge 2$, consider the distribution $(q(m,i),\ 1 \le i \le m-1)$ constructed to be proportional to $\frac{1}{i(m-i)}$.
 Explicitly 
\begin{equation}\label{01}
q(m,i)=\tfrac{m}{2h_{m-1}}\cdot\tfrac{1}{i(m-i)},\,\,1\le i\le m-1,
\end{equation}
where $h_{m-1}$ is the harmonic sum $\sum_{i=1}^{m-1}1/i$. 
Now fix $n \ge 2$.
Consider the process of constructing a random tree by recursively splitting the integer interval 
$[n] = \{1,2,\ldots,n\}$ of ``leaves" as follows.
First specify that there is a left edge and a right edge at the root,
 leading to a left subtree which will have the\footnote{$G$ for {\em gauche} (left) because later we use $L_n$ for leaf hop-height.}   $G_n$ leaves $\{1,\ldots,G_n\}$
 and a right subtree which will have the $R_n = n - G_n$ leaves $\{G_n + 1,\ldots, n\}$, where $G_n$ 
 (and also $R_n$, by symmetry) has distribution $q(n,\cdot)$. 
 Recursively, a subinterval with $m\ge 2$ leaves is split into two subintervals of random size
  from the distribution $q(m,\cdot)$. 
  Continue until reaching intervals of size $1$, which are the leaves.
  That is a discrete time construction, which we call\footnote{DTCS and CTCS are abbreviations for Discrete Time  Critical Splitting and Continuous Time Critical Splitting, for reasons explained in Section \ref{sec:motive}.} $\DTCS(n)$.
 Figure \ref{Fig:1d} (left) illustrates schematically the construction as interval-splitting, 
  with the natural tree structure shown in Figure \ref{Fig:1d} (center and right).
  
\begin{figure}[ht]
\setlength{\unitlength}{0.045in}
\begin{picture}(60,50)
\put(-5,0){\vector(0,1){50}}
\put(-9,-1){0}
\put(-9,24){5}
\put(-10,49){10}
\put(-16,29){time}
\multiput(-5,0)(0,5){11}{\line(-1,0){2}}
\multiput(0,0)(3,0){20}{\circle*{0.9}}
\put(-1.3,-0.5){[}
\put(57.7,-0.5){]}
\multiput(0,5)(3,0){20}{\circle*{0.9}}
\put(-1.3,4.5){[}
\put(6.7,4.5){]}
\put(7.7,4.5){[}
\put(57.7,4.5){]}
\multiput(0,10)(3,0){20}{\circle*{0.9}}
\put(-1.3,9.5){[}
\put(3.7,9.5){]}
\put(4.7,9.5){[}
\put(6.7,9.5){]}
\put(7.7,9.5){[}
\put(9.7,9.5){]}
\put(10.7,9.5){[}
\put(57.7,9.5){]}
\multiput(0,15)(3,0){2}{\circle*{0.9}}
\put(-1.3,14.5){[}
\put(0.7,14.5){]}
\put(1.7,14.5){[}
\put(3.7,14.5){]}
\multiput(12,15)(3,0){16}{\circle*{0.9}}
\put(10.7,14.5){[}
\put(21.7,14.5){]}
\put(22.7,14.5){[}
\put(57.7,14.5){]}
\multiput(12,20)(3,0){16}{\circle*{0.9}}
\put(10.7,19.5){[}
\put(18.7,19.5){]}
\put(19.7,19.5){[}
\put(21.7,19.5){]}
\put(22.7,19.5){[}
\put(54.7,19.5){]}
\put(55.7,19.5){[}
\put(57.7,19.5){]}
\multiput(12,25)(3,0){3}{\circle*{0.9}}
\put(10.7,24.5){[}
\put(15.7,24.5){]}
\put(16.7,24.5){[}
\put(18.7,24.5){]}
\multiput(24,25)(3,0){11}{\circle*{0.9}}
\put(22.7,24.5){[}
\put(48.7,24.5){]}
\put(49.7,24.5){[}
\put(54.7,24.5){]}
\multiput(12,30)(3,0){2}{\circle*{0.9}}
\put(10.7,29.5){[}
\put(12.7,29.5){]}
\put(13.7,29.5){[}
\put(15.7,29.5){]}
\multiput(24,30)(3,0){11}{\circle*{0.9}}
\put(22.7,29.5){[}
\put(36.7,29.5){]}
\put(37.7,29.5){[}
\put(48.7,29.5){]}
\put(49.7,29.5){[}
\put(51.7,29.5){]}
\put(52.7,29.5){[}
\put(54.7,29.5){]}
\multiput(24,35)(3,0){9}{\circle*{0.9}}
\put(22.7,34.5){[}
\put(30.7,34.5){]}
\put(31.7,34.5){[}
\put(36.7,34.5){]}
\put(37.7,34.5){[}
\put(39.7,34.5){]}
\put(40.7,34.5){[}
\put(48.7,34.5){]}
\multiput(24,40)(3,0){5}{\circle*{0.9}}
\put(22.7,39.5){[}
\put(27.7,39.5){]}
\put(28.7,39.5){[}
\put(30.7,39.5){]}
\put(31.7,39.5){[}
\put(33.7,39.5){]}
\put(34.7,39.5){[}
\put(36.7,39.5){]}
\multiput(42,40)(3,0){3}{\circle*{0.9}}
\put(40.7,39.5){[}
\put(42.7,39.5){]}
\put(43.7,39.5){[}
\put(48.7,39.5){]}
\multiput(24,45)(3,0){2}{\circle*{0.9}}
\put(22.7,44.5){[}
\put(24.7,44.5){]}
\put(25.7,44.5){[}
\put(27.7,44.5){]}
\multiput(45,45)(3,0){2}{\circle*{0.9}}
\put(43.7,44.5){[}
\put(45.7,44.5){]}
\put(46.7,44.5){[}
\put(48.7,44.5){]}
%%%%%%%%%%%%%
%\put(66,15){\line(0,-1){2.5}}
%\put(67.5,15){\line(0,-1){2.5}}
\put(66,15){\line(1,0){1.5}}
\put(66.75,15){\line(0,-1){5}}
%\put(69,10){\line(0,-1){2.5}}
\put(66.75,10){\line(1,0){2.25}}
\put(67.5,10){\line(0,-1){5}}
%\put(80.25,2.5){\line(0,-1){5}}
\put(82.5,10){\line(0,-1){5}}
\put(67.5,5){\line(1,0){15}}
%\put(70.5,10){\line(0,-1){2.5}}
\put(83.25,15){\line(0,-1){5}}
\put(70.5,10){\line(1,0){12.75}}
\put(74.25,20){\line(0,-1){5}}
\put(86.25,20){\line(0,-1){5}}
\put(74.25,15){\line(1,0){12}}
\put(73.5,25){\line(0,-1){5}}
%\put(76.5,20){\line(0,-1){2.5}}
\put(73.5,20){\line(1,0){3}}
\put(85.5,25){\line(0,-1){5}}
\put(85.5,20){\line(1,0){9}}
\put(72.75,30){\line(0,-1){5}}
%\put(75,25){\line(0,-1){2.5}}
\put(72.75,25){\line(1,0){2.5}}
\put(72,30){\line(1,0){1.5}}
%\put(72,30){\line(0,-1){2.5}}
%\put(73.5,30){\line(0,-1){2.5}}
%\put(94.5,20){\line(0,-1){2.5}}
\put(84,30){\line(0,-1){5}}
%\put(93,30){\line(0,-1){2.5}}
%\put(91.5,30){\line(0,-1){2.5}}
\put(91.5,30){\line(1,0){1.5}}
\put(92.25,30){\line(0,-1){5}}
\put(84,25){\line(1,0){8.25}}
\put(81,35){\line(0,-1){5}}
\put(87.75,35){\line(0,-1){5}}
\put(81,30){\line(1,0){6.75}}
\put(79.5,40){\line(0,-1){5}}
\put(83.25,40){\line(0,-1){5}}
\put(79.5,35){\line(1,0){3.75}}
%\put(85.5,35){\line(0,-1){2.5}}
\put(88.5,40){\line(0,-1){5}}
\put(85.5,35){\line(1,0){3}}
\put(78.75,45){\line(0,-1){5}}
%\put(81,40){\line(0,-1){2.5}}
\put(78.75,40){\line(1,0){2.25}}
%\put(82.5,40){\line(0,-1){2.5}}
%\put(84,40){\line(0,-1){2.5}}
\put(82.5,40){\line(1,0){1.5}}
%\put(87,40){\line(0,-1){2.5}}
\put(89.25,45){\line(0,-1){5}}
\put(87,40){\line(1,0){2.25}}
%\put(78,45){\line(0,-1){2.5}}
%\put(79.5,45){\line(0,-1){2.5}}
\put(78,45){\line(1,0){1.5}}
%\put(88.5,45){\line(0,-1){2.5}}
%\put(90,45){\line(0,-1){2.5}}
\put(88.5,45){\line(1,0){1.5}}
\put(66,15){\circle*{0.9}}
\put(67.5,15){\circle*{0.9}}
\put(69,10){\circle*{0.9}}
\put(70.5,10){\circle*{0.9}}
\put(72,30){\circle*{0.9}}
\put(73.5,30){\circle*{0.9}}
\put(75,25){\circle*{0.9}}
\put(76.5,20){\circle*{0.9}}
\put(78,45){\circle*{0.9}}
\put(79.5,45){\circle*{0.9}}
\put(81,40){\circle*{0.9}}
\put(82.5,40){\circle*{0.9}}
\put(84,40){\circle*{0.9}}
\put(85.5,35){\circle*{0.9}}
\put(87,40){\circle*{0.9}}
\put(88.5,45){\circle*{0.9}}
\put(90,45){\circle*{0.9}}
\put(91.5,30){\circle*{0.9}}
\put(93,30){\circle*{0.9}}
\put(94.5,20){\circle*{0.9}}
\put(80.25,5){\line(0,-1){5}}
%%%%%%%%%%%%%%%%%%. + 35
\put(101,15){\circle{0.9}}
\put(102.5,15){\circle{0.9}}
\put(101.75,15){\line(0,-1){10}}
\put(102.5,10){\circle{0.9}}
\put(101.5,5){\line(1,0){14.5}}
\put(111,5){\line(0,-1){5}}
\put(116,5){\line(0,1){10}}
\put(115.5,10){\circle{0.9}}
\put(107,15){\line(1,0){13}}
\put(107,15){\line(0,1){15}}
\put(107.75,20){\circle{0.9}}
\put(107.75,25){\circle{0.9}}
\put(107.75,30){\circle{0.9}}
\put(106.25,30){\circle{0.9}}
\put(120,15){\line(0,1){10}}
\put(120.75,20){\circle{0.9}}
\put(116,25){\line(1,0){10}}
\put(126,25){\line(0,1){5}}
\put(126.75,30){\circle{0.9}}
\put(125.25,30){\circle{0.9}}
\put(116,25){\line(0,1){5}}
\put(111,30){\line(1,0){11}}
\put(122,30){\line(0,1){15}}
\put(121.25,35){\circle{0.9}}
\put(121.25,40){\circle{0.9}}
\put(121.25,45){\circle{0.9}}
\put(122.75,45){\circle{0.9}}
\put(111,30){\line(0,1){5}}
\put(107,35){\line(1,0){8}}
\put(107,35){\line(0,1){10}}
\put(107.75,40){\circle{0.9}}
\put(107.75,45){\circle{0.9}}
\put(106.25,45){\circle{0.9}}
\put(115,35){\line(0,1){5}}
\put(115.75,40){\circle{0.9}}
\put(114.25,40){\circle{0.9}}

\end{picture}
\caption{Equivalent representations of a realization of DTCS(20).}
\label{Fig:1d}
\end{figure}
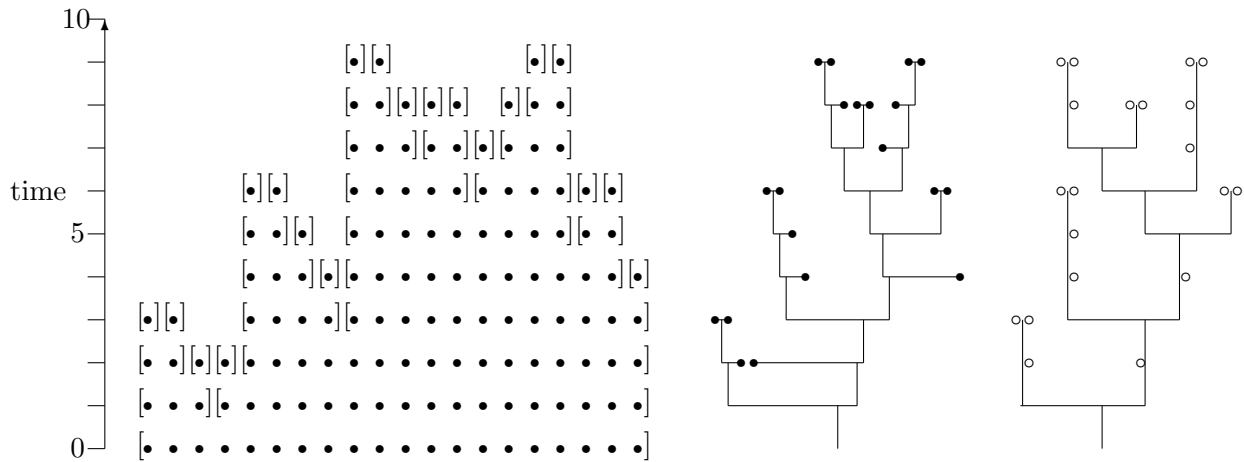

This discrete time model  was introduced and briefly studied many years ago in \cite{me_clad}.
A more recent observation was that an associated continuous time model has appealing structural properties, and that observation was major
motivation for the current project.  
We define the  associated continuous time model $\CTCS(n)$, by declaring that an interval with $m \ge 2$ leaves is split at rate $h_{m-1}$,
 that is after an Exponential($h_{m-1}$) random time. 
 Figure \ref{Fig:1c} shows a schematic realization of $\CTCS(20)$ as a ``continuization" of the realization of $\DTCS(20)$ in Figure \ref{Fig:1d}. 
 Figure \ref{Fig:400} shows an actual realization of $\CTCS(400)$.

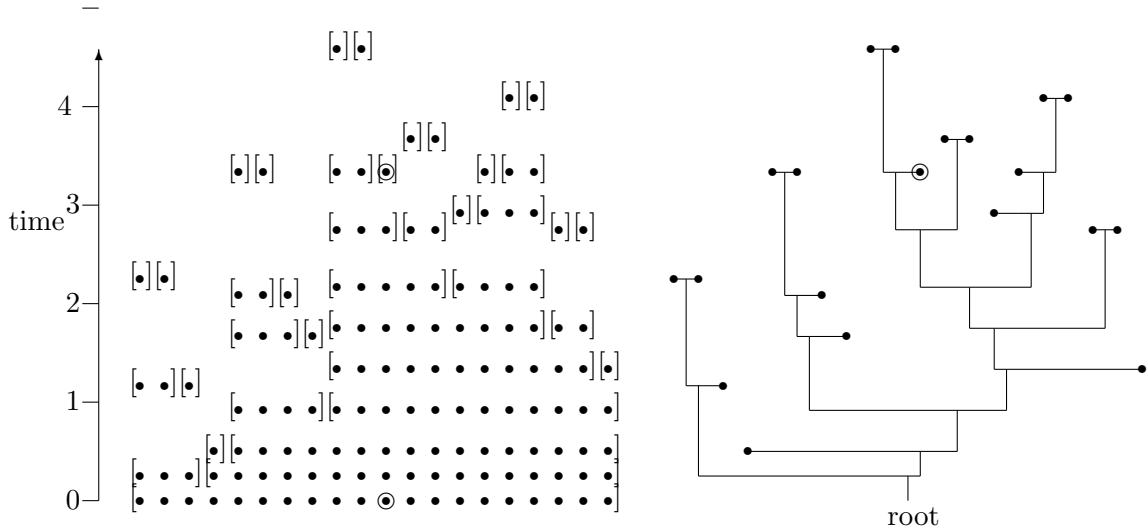
\begin{figure}%[h]
\setlength{\unitlength}{0.043in}
\begin{picture}(60,60)
\put(-5,0){\vector(0,1){55}}
\put(-16,33){time}
\multiput(-5,0)(0,12){6}{\line(-1,0){2}}
\put(-9,-1){0}
\put(-9,11){1}
\put(-9,23){2}
\put(-9,35){3}
\put(-10,47){4}
\multiput(0,0)(3,0){20}{\circle*{0.9}}
\put(30,0){\circle{1.8}}
\put(30,40){\circle{1.8}}
\put(-1.3,-0.5){[}
\put(57.7,-0.5){]}
\multiput(0,3)(3,0){20}{\circle*{0.9}}
\put(-1.3,2.5){[}
\put(6.7,2.5){]}
\put(7.7,2.5){[}
\put(57.7,2.5){]}

\multiput(0,14)(3,0){3}{\circle*{0.9}}
\multiput(0,27)(3,0){2}{\circle*{0.9}}

\put(-1.3,13.5){[}
\put(3.7,13.5){]}
\put(4.7,13.5){[}
\put(6.7,13.5){]}

\put(-1.3,26.5){[}
\put(1.7,26.5){[}
\put(0.7,26.5){]}
\put(3.7,26.5){]}

\multiput(9,6)(3,0){17}{\circle*{0.9}}

\put(7.7,5.5){[}
\put(9.7,5.5){]}
\put(10.7,5.5){[}
\put(57.7,5.5){]}

%\multiput(0,10)(3,0){2}{\circle*{0.9}}
%\put(-1.3,14.5){[}
%\put(0.7,14.5){]}
%\put(1.7,14.5){[}
%\put(3.7,14.5){]}
\multiput(12,11)(3,0){16}{\circle*{0.9}}

\put(10.7,10.5){[}
\put(21.7,10.5){]}
\put(22.7,10.5){[}
\put(57.7,10.5){]}

\multiput(12,20)(3,0){4}{\circle*{0.9}}
\multiput(24,16)(3,0){12}{\circle*{0.9}}

\put(10.7,19.5){[}
\put(18.7,19.5){]}
\put(19.7,19.5){[}
\put(21.7,19.5){]}
\put(22.7,15.5){[}
\put(54.7,15.5){]}
\put(55.7,15.5){[}
\put(57.7,15.5){]}

\multiput(12,25)(3,0){3}{\circle*{0.9}}
\put(10.7,24.5){[}
\put(15.7,24.5){]}
\put(16.7,24.5){[}
\put(18.7,24.5){]}

\multiput(24,21)(3,0){11}{\circle*{0.9}}
\put(22.7,20.5){[}
\put(48.7,20.5){]}
\put(49.7,20.5){[}
\put(54.7,20.5){]}

\multiput(12,40)(3,0){2}{\circle*{0.9}}
\put(10.7,39.5){[}
\put(12.7,39.5){]}
\put(13.7,39.5){[}
\put(15.7,39.5){]}

\multiput(24,26)(3,0){9}{\circle*{0.9}}
\put(22.7,25.5){[}
\put(36.7,25.5){]}
\put(37.7,25.5){[}
\put(48.7,25.5){]}

\multiput(51,33)(3,0){2}{\circle*{0.9}}
\put(49.7,32.5){[}
\put(51.7,32.5){]}
\put(52.7,32.5){[}
\put(54.7,32,5){]}

\multiput(24,33)(3,0){5}{\circle*{0.9}}
\put(22.7,32.5){[}
\put(30.7,32.5){]}
\put(31.7,32.5){[}
\put(36.7,32.5){]}

\multiput(39,35)(3,0){4}{\circle*{0.9}}
\put(37.7,34.5){[}
\put(39.7,34.5){]}
\put(40.7,34.5){[}
\put(48.7,34.5){]}

\multiput(24,40)(3,0){3}{\circle*{0.9}}
\put(22.7,39.5){[}
\put(27.7,39.5){]}
\put(28.7,39.5){[}
\put(30.7,39.5){]}

\multiput(33,44)(3,0){2}{\circle*{0.9}}
\put(31.7,43.5){[}
\put(33.7,43.5){]}
\put(34.7,43.5){[}
\put(36.7,43.5){]}

\multiput(42,40)(3,0){3}{\circle*{0.9}}
\put(40.7,39.5){[}
\put(42.7,39.5){]}
\put(43.7,39.5){[}
\put(48.7,39.5){]}

\multiput(24,55)(3,0){2}{\circle*{0.9}}
\put(22.7,54.5){[}
\put(24.7,54.5){]}
\put(25.7,54.5){[}
\put(27.7,54.5){]}

\multiput(45,49)(3,0){2}{\circle*{0.9}}
\put(43.7,48.5){[}
\put(45.7,48.5){]}
\put(46.7,48.5){[}
\put(48.7,48.5){]}

%%%%%%%%%%%%

%\put(65,0){\circle*{0.9}}
\put(68,3){\line(1,0){27}}
\put(93.5,3){\line(0,-1){3}}

\put(66.5,14){\line(1,0){4.5}}
\put(68,3){\line(0,1){11}}
\put(71,14){\circle*{0.9}}

\put(65,27){\line(1,0){3}}
\put(66.5,14){\line(0,1){13}}
\put(65,27){\circle*{0.9}}
\put(68,27){\circle*{0.9}}

\put(95,3){\line(0,1){3}}
\put(74,6){\line(1,0){25.5}}
\put(74,6){\circle*{0.9}}

\put(99.5,6){\line(0,1){5}}
\put(81.5,11){\line(1,0){24}}
\put(81.5,11){\line(0,1){9}}

\put(81.5,20){\line(1,0){4.5}}
\put(86,20){\circle*{0.9}}
\put(81.5,20){\line(-1,0){1.5}}

\put(80,20){\line(0,1){5}}
\put(80,25){\line(-1,0){1.5}}
\put(80,25){\line(1,0){3}}
\put(83,25){\circle*{0.9}}

\put(78.5,25){\line(0,1){15}}
\put(77,40){\line(1,0){3}}
\put(77,40){\circle*{0.9}}
\put(80,40){\circle*{0.9}}

\put(105.5,11){\line(0,1){5}}
\put(105.5,16){\line(1,0){16.5}}
\put(122,16){\circle*{0.9}}
\put(105.5,16){\line(-1,0){1.5}}

\put(104,16){\line(0,1){5}}
\put(104,21){\line(1,0){13.5}}
\put(104,21){\line(-1,0){3}}

\put(101,21){\line(0,1){5}}
\put(101,26){\line(1,0){7.5}}
\put(101,26){\line(-1,0){6}}

\put(95,26){\line(0,1){7}}
\put(95,33){\line(1,0){4.5}}
\put(95,33){\line(-1,0){3}}

\put(92,33){\line(0,1){7}}
\put(92,40){\line(1,0){3}}
\put(92,40){\line(-1,0){1.5}}
\put(95,40){\circle*{0.9}}
\put(95,40){\circle{1.8}}

\put(90.5,40){\line(0,1){15}}
\put(89,55){\line(1,0){3}}
\multiput(89,55)(3,0){2}{\circle*{0.9}}

\put(99.5,33){\line(0,1){11}}
\put(98,44){\line(1,0){3}}
\multiput(98,44)(3,0){2}{\circle*{0.9}}

\put(108.5,26){\line(0,1){9}}
\put(107,35){\line(1,0){3}}
\put(110,35){\line(0,1){5}}

\put(107,40){\line(1,0){4.5}}
\put(107,40){\circle*{0.9}}
\put(111.5,40){\line(0,1){9}}
\multiput(110,49)(3,0){2}{\circle*{0.9}}
\put(110,49){\line(1,0){3}}

\put(117.5,21){\line(0,1){12}}
\multiput(116,33)(3,0){2}{\circle*{0.9}}
\put(116,33){\line(1,0){3}}

\put(104,35){\circle*{0.9}}
\put(107,35){\line(-1,0){3}}

\put(91,-3){root}
 \end{picture}
\caption{Equivalent representations of a realization of $\CTCS(20)$.
One distinguished leaf is marked.
}
%{\color{blue} Same figure in paper 3.}}
\label{Fig:1c}
\end{figure}

\begin{figure}
\hspace*{-1.9in}
\includegraphics[width=8.7in]{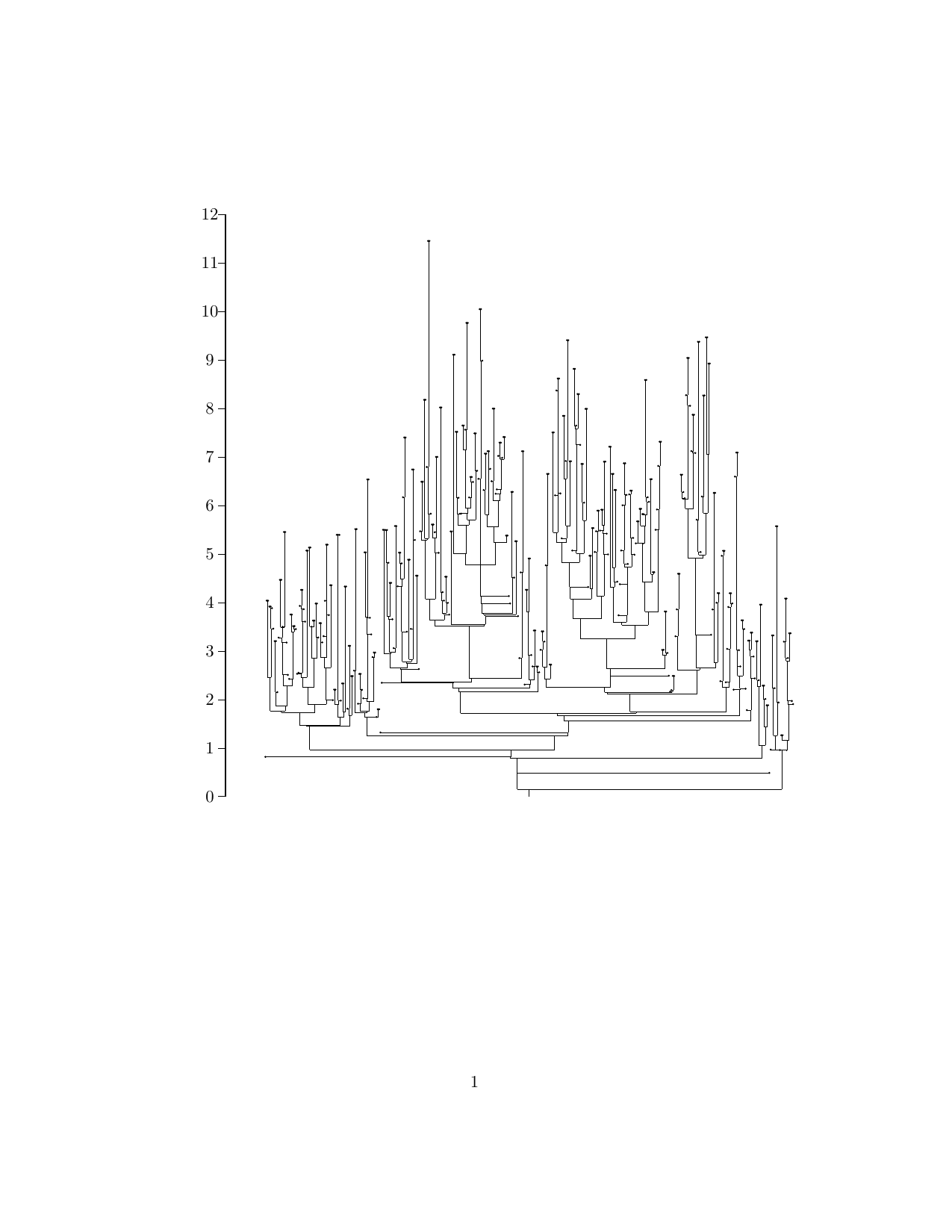}
\vspace*{-4.1in}
\caption{A realization of the tree-representation of the $\CTCS(n)$ model with $n = 400$. 
Drawn as in the previous Figure, so the width of subtrees above a given time level are the sizes of clades at that time.
}
\label{Fig:400}
\end{figure}
  
 Observe that there is no direct connection between the model (discrete or continuous) for $n$ and the model for $n+1$.
 Because a binary tree on $n$ leaves has $n-1$ splits, one imagines that as $n$ increases the trees will tend to get taller.
 However in the continuous model there is an offsetting feature, that the initial splitting rate $h_{n-1}$ is increasing with $n$.
 This turns out to have the following remarkable effect.
\begin{Proposition}
\label{P:split}
Let $B_n$ denote the height of the branchpoint between the paths to two uniform random distinct leaves of $\CTCS(n)$.
Then, for each $n \geq 2$, $B_n$ has exactly Exponential$(1)$ distribution.
\end{Proposition}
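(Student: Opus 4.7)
The plan is to recognize that the event $\{B_n > t\}$ is exactly the event that two uniform random distinct leaves lie in the same clade at time $t$, and then to leverage the already-computed formula (\ref{EQn}) for $\Ex[Q_n(t)]$.

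More precisely, for any two fixed distinct leaves $a,b$, the branchpoint between the root-to-$a$ and root-to-$b$ paths occurs at the first split-time that separates $a$ from $b$; equivalently, $a$ and $b$ lie in a common clade at all times $s < B_n(a,b)$ and in different clades thereafter. So
\[
\Pr(B_n > t) = \frac{1}{\binom{n}{2}} \Ex\!\left[\,\sum_i \binom{X_n(i,t)}{2}\,\right]
= \frac{\Ex[Q_n(t)] - n}{n(n-1)},
\]
where the first equality just averages over the uniform random choice of the pair of leaves.

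Substituting (\ref{EQn}), namely $\Ex[Q_n(t)] = n + (n^2-n)e^{-t}$, the right side collapses to $e^{-t}$, giving $\Pr(B_n > t) = e^{-t}$ for all $n \ge 2$ and $t \ge 0$.

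There is no real obstacle: the content of the proposition is precisely that the canonical choice $r(m,n) = h_{m-1}$ was designed to make the ``sum-of-squares minus $n$" shrink deterministically in expectation at rate $1$, and the two-random-leaves probability is (up to the normalizer $\binom{n}{2}$) exactly that quantity. The only step that requires any care is justifying the identity $\Pr(B_n > t) = \Ex[\#\text{pairs in common clade at time }t]/\binom{n}{2}$, which is immediate from exchanging the uniform pair-choice with the expectation, and noting that within a clade of size $X_n(i,t)$ there are $\binom{X_n(i,t)}{2}$ pairs of leaves still unseparated at time $t$.
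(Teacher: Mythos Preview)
Your proof is correct and essentially identical to the paper's own proof: both identify $\Pr(B_n>t)$ with $\frac{1}{n(n-1)}\Ex[Q_n(t)-n]$ via counting pairs of leaves in a common clade, and then invoke (\ref{EQn}). The only cosmetic difference is that you write the count using binomial coefficients $\binom{X_n(i,t)}{2}/\binom{n}{2}$ while the paper writes out the products directly.
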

 The short stochastic calculus proof will be given in Appendix \ref{sec:Exp1}.
 This result hints at the ``consistency" result (Theorem \ref{T:consistent})
 and suggests that $h_{n-1}$ is the canonical choice of splitting rates for the continuization.
 In general the continuous model is more tractable, so we focus more on it rather than the discrete model.

We do find it convenient to adopt the biological term  {\em clade} for the set of leaves in a subtree, 
that is the elements in a subinterval somewhere in the interval-splitting process.
So the path from the root to the distinguished leaf 11 in Figure \ref{Fig:1c} passes through successive clades

$[[1,20]], \ [[4,20]], \ [[5,20]], \ [[9,20]], \ [[9,19]], \ [[9,17]], [[9,13]], \ [[9,11]], \ [[11]]$

\noindent
which have successive sizes (number of leaves) 
$20,17,16,12,11,9,5,3,1$.

Regarding terminology, remember that ``time" and ``height" are the same, 
within the construction of $\CTCS(n)$ for fixed $n$.
The height\footnote{Or depth, if one draws trees upside-down.}  of a leaf is the time at which its clade becomes a singleton, 
and the height of a split between clades is the time at which the split occurs.
Within the mathematical analysis of random processes we generally follow the  usual ``time" convention,
while in stating results we generally use the tree-related terminology of ``height".

\subsection{The unordered version}\label{sec:unordered}
In the definition above (in both discrete and continuous time), 
the leaves are labelled $1,\dots,n$ from left to right.
This is sometimes convenient, and it is important in
Section \ref{sec:depends}. 

Sometimes it is more convenient  to consider the \emph{unordered} version,
which is obtained by relabelling the leaves by a uniform random permutation.
Equivalently, the unordered version is obtained if we modify the
construction so that each time a clade of size $m$ splits into two of sizes
$i$ and $m-i$, we let the left subclade consist of $i$ leaves that are
chosen uniformly at random from the leaves in the clade.

In most cases it does not matter which version we use, for example when
considering properties of a random leaf.
The unordered version has the feature of (by definition) being exchangeable,
i.e., its distribution is invariant under permutations of the leaves.
This will be important in 
Section \ref{sec:consistent-exch},
where we therefore will use this version.

 \subsection{The three foundational results}
 We regard the following three results as ``foundational", in that they open the way to further developments.
 \begin{itemize}
 \item In Section \ref{sec:height} we describe the CLT for leaf heights, Theorem \ref{TNorm}, 
leading to results and conjectures for other height-related statistics.
 \item In Section \ref{sec:OPfringe} we describe the ``occupation measure" Theorem \ref{T:alimit}, leading to an explicit description 
 of the asymptotic {\em fringe tree},  many of whose properties have yet to be investigated.
 The fringe tree is essentially the way that real-world phylogenies are drawn as {\em cladograms}, and we illustrate a real example alongside a realization
of our model. 
 
 \item In Section \ref{sec:consistent-exch} we describe the consistency property (Theorem \ref{T:consistent}) and the resulting representation of a 
 limit tree  $\CTCS(\infty)$ via an exchangeable random partition of $\bbN$.
 This leads to a description of the ``number of subclades along a path to a uniform random leaf on the infinite boundary" process within $\CTCS(\infty)$ 
 in terms of a certain subordinator (Theorem \ref{T:exact}), and the possibility for further analysis of $\CTCS(\infty)$ itself.
 \item Section \ref{sec:misc} describes some less-studied aspects of the model, and provides more conceptual discussion.
 \item The Appendix contains alternative proofs of results proved in other papers.
 \end{itemize}

 \section{Heights and related statistics}
\label{sec:height}
To start our study of quantitative aspects of $\CTCS(n)$, let us consider heights of leaves. 
What can we say about the  height $D_n$ of a uniform random leaf $\ell$?
Figure \ref{Fig:400} suggests that $D_n$ increases slowly with $n$.

\subsection{The harmonic descent chain}
\label{sec:HD}
We can characterize $D_n$ in an alternate way, as follows.  In the discrete construction, 
the sequence of clade sizes along the path from the root to $\ell$ is the discrete-time Markov chain, starting in state $n$,  whose transition 
($m \to i)$ probabilities $q^*(m,i)$ 
are obtained by size-biasing the $q(m,\cdot)$ distribution; so
\begin{equation}
 q^*(m,i) :=   \sfrac{2i}{m} q(m,i) = \sfrac{1}{h_{m-1}} \cdot \sfrac{1}{m-i}, 
 \qquad 1 \le i \le m-1 , \ m \ge 2 
 \label{qstar}
 \end{equation}
from \eqref{01}.
Because the continuous-time CTCS process exits $m$ at rate $h_{m-1}$, the continuous-time process of clade sizes 
as one moves at speed $1$ along the path is the 
continuous-time Markov process on states $\{1,2,3,\ldots\}$ with transition rates
\begin{equation}
\lambda_{m,i} := \sfrac{1}{m-i}, \qquad 1 \le i \le m-1, \ m \ge 2
\label{lambda-rates}
\end{equation}
with state $1$ absorbing. 
So $D_n$ is the absorption time for this chain, started at state $n$.
Let us call this the 
(continuous-time)
{\em harmonic descent} (HD) chain.\footnote{{\em Descent} is a reminder that the chain is decreasing.  Despite its simple form, the HD chain has apparently never been studied until recently.} 
In parallel there is the discrete-time HD chain with transition probabilities \eqref{qstar}, and we write $L_n$ for the absorption time in discrete time.
So $L_n$ is the height, in the sense of number of edges, of a uniform random leaf in $\DTCS(n)$.

The next few sections study $D_n$ and $L_n$ as defined by the Markov chains, without using any extra properties of the tree model.

\subsection{The subordinator approximation heuristic}
\label{sec:subo}

%{\color{blue}  Copied and  extended from arxiv papers II and III, to show where $\zeta(2)$ and $\zeta(3)$ arise. }

The HD chain is directly relevant to our tree model in two ways (the second way involves the {\em fringe distribution}, Section \ref{sec:fringe}).
First,
there is a simple probabilistic heuristic for the behavior of the continuous time HD chain, 
leading to the approximation \eqref{approx} below.
Write $\bX = (X_t, t \ge 0)$ for the HD chain with rates \eqref{lambda-rates}, 
or $\bX^{(n)} = (X^{(n)}_t, t \ge 0)$ for this chain starting with $X^{(n)}_0 = n$.
The key idea is to study the process  $\log \bX = (\log X_t, \ t \ge 0)$.
By considering its transitions, 
one quickly sees that, for large $n$,  there should be a good approximation (the calculation is given in Section \ref{sec:approxc} below)
\begin{equation}
 \log X^{(n)}_t \approx \log n - Y_t    \mbox{ while } Y_t < \log n 
 \label{approx}
 \end{equation}
 where  $(Y_t, 0 \le t < \infty)$ is the subordinator with  {\em L\'{e}vy measure} 
  $\psi_\infty$ and corresponding $\sigma$-finite density $f_\infty$ on $(0,\infty)$ defined as
\begin{equation}
 \psi_\infty[a, \infty) :=  - \log (1 - e^{-a}); 
\quad  f_\infty(a) :=  \sfrac{e^{-a}}{1 - e^{-a}}, \quad
  \ 0 < a < \infty .
  \label{muinf}
  \end{equation} 
  Recall that a {\em subordinator} \cite{bertoin}
 is the continuous-time analog of the discrete-time process of partial sums of i.i.d.\ positive summands: informally
 \begin{align}
\Pr(Y_{t+dt}  - Y_{t} \in \ddx a) = f_\infty(a) \dd a\dd t . 
\end{align}
We call \eqref{approx}  the  {\em subordinator approximation heuristic}.
It often enables quick ``back of an envelope" calculations which can then be
formalized in different ways.
One formalization of this approximation is the limit theorem 
Theorem \ref{T:exact}.

It is well known that the subordinator $(Y_t, 0 \le t < \infty)$ satisfies the strong law of large numbers
\begin{equation}
 t^{-1} Y_t \to \rho \qquad \mbox{ a.s.\ as }  t \to \infty
 \label{LLN}
 \end{equation}
where the limit is the mean
\begin{eqnarray} 
\rho & =& \int_0^\infty \psi_\infty[a, \infty) \dd a 
=  \int_0^\infty -  \log (1 - e^{-a}) \dd a \nonumber \\
%&=& \int_0^1 - \frac{\log(1-y)}{y}  \dd y = \int_0^1 \sum_{i=1}^\infty
%\frac{y^{i-1}}{i} \dd y  \nonumber \\
&=& \int_0^\infty \sum_{i=1}^\infty \frac{e^{-ia}}{i} \dd a
%\nonumber \\ &=& 
=\sum_{i=1}^\infty \frac{1}{i^2} = \zeta(2) =  \pi^2/6   \label{psirho}
\end{eqnarray}
by a classical calculation\footnote{Euler's formula $\zeta(2) := \sum_{i \ge
    1}  i^{-2} = \pi^2/6$ is used frequently in proofs.}.
Now $D_n$ is the time at which $\log X^{(n)}_t$ reaches $0$, 
so the approximation heuristic and \eqref{LLN} suggest the law of large numbers\footnote{Being pedantic, we do not yet have a joint distribution for $(D_n, n \ge 1)$ so we cannot write a.s. convergence.}
\begin{equation}
\label{WLLN}
 D_n/\log n \to_p \mu:= 1/\rho = 6/\pi^2 \mbox{ as } n \to \infty . 
 \end{equation}
Moreover the subordinator satisfies a central limit theorem,
because the central limit theorem for 
sums of i.i.d.\ variables extends immediately to subordinators by considering integer times.
The variance of the subordinator is $\var(Y_t) = \xsigma^2 t$ where $\xsigma^2$ is calculated as above:
\begin{eqnarray} 
\xsigma^2 &=& 2 \int_0^\infty a \psi_\infty[a, \infty) \dd a  =  2 \int_0^\infty - a \log (1 - e^{-a}) \dd a \nonumber \\
&=& 2\int_0^\infty \sum_{i=1}^\infty a\frac{e^{-ia}}{i} \dd a
%&=& 2 \int_0^1 \frac{\log(1-y)}{y} \cdot \log y \dd y = - 2 \int_0^1 \sum_{i=1}^\infty \frac{y^{i-1}}{i} \cdot \log y \dd y  \nonumber \\
%&=& 
%2  \sum_{i=1}^\infty \int_0^1 \frac{y^{i-1}}{i^2} \dd y 
=  2 \sum_{i=1}^\infty \frac{1}{i^3} = 2 \zeta(3)  % \label{zzz}
\end{eqnarray}
by another classical calculation.
So the CLT for the subordinator is
\[ \frac{Y_t - \rho t}{t^{1/2}} \to_d  \mathrm{Normal}(0,\xsigma^2) .\]
As with the sums of i.i.d.\ variables\footnote{Commonly seen as a textbook exercise, e.g.\ \cite[10.6.3]{grimmett}  or \cite[3.4.7]{durrett}.},
this extends to the ``renewal CLT" for 
$Q_s :=  \inf\{t: Y_t \ge s\}$:
\[ \frac{Q_s - \mu s}{s^{1/2}} \to_d  \mathrm{Normal}(0,\mu^3 \xsigma^2) .
\]
Recalling again that $D_n$ is the time at which $\log X^{(n)}_t$ reaches $0$, 
so the approximation heuristic and the renewal CLT above with $s = \log n$ 
suggest the following (true) Theorem, whose proof methods will be discussed in Section \ref{sec:proofCLT}.
\begin{Theorem}
\label{TNorm}
\[ \frac{\Ex[D_n]}{\log n} \to \mu \quad\mbox{and\/}\quad
\frac{D_n - \mu \log n}{\sqrt{\log n}} \to_d  \mathrm{Normal}(0,\sigma^2) 
\quad \mbox{ as } n \to \infty 
\]
where
\[ \mu := 1/\zeta(2) = 6/\pi^2 = 0.6079... ; \quad 
%\sigma^2 := 2 \zeta(3) = 2.4041... .\]
\sigma^2 := 2 \zeta(3)/\zeta^3 (2) = 0.5401... .\]
\end{Theorem}

\noindent
Figure \ref{Fig:6} shows the Normal distribution emerging.
\begin{figure}[ht]
\begin{center}
\includegraphics[width = 3.0in]{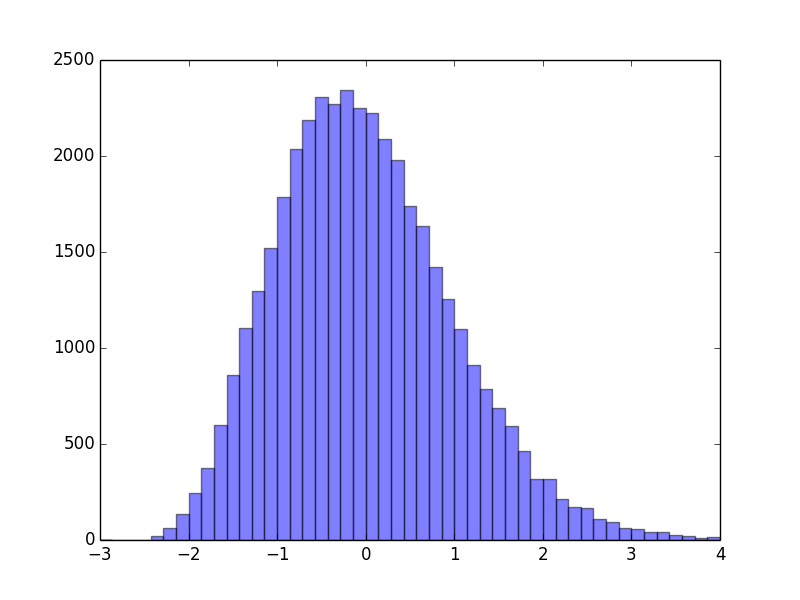}
\caption{Histogram of leaf heights $D_n$ , relative to mean and s.d.; multiple simulations with n = 3,200.
}
%{\color{blue}  Figure \ref{Fig:6} not used in other papers.}}
\label{Fig:6}
\end{center}
\end{figure}

\subsection{The approximation calculation}
\label{sec:approxc}

%{\color{blue}  Copied and  slightly extended from previous version of paper 2, and from paper 3.}

Here is the calculation for the approximation \eqref{approx}.
The process  $\log \bX $ is itself Markov with transition rates described below.
A jump\footnote{Note these are {\em downward}  jumps, so take negative values.} of $\bX$ from $j$ to $j - i$ has height $-i$, which corresponds to a
jump of $\log \bX$ from $\log j$ having height  $\log (j-i)  - \log j =  \log (1 - i/j)$.
Define the measure $\chpsi_j$ on $(-\infty, 0)$ as the measure assigning weight $1/i$ to point $\log (1 - i/j)$, for each $1 \le i \le j-1$.
So this measure $\chpsi_j$ specifies the  heights and rates of the downward jumps of $\log \bX$ from $\log j$. 
Writing
\begin{align}
 \chpsi_j(-\infty, a] = \sum_{i = j (1-e^{a})}^{j-1} 1/i  
\end{align}
shows that there is a $j \to \infty$ limit measure in the sense
\begin{equation}
  \chpsi_j(-\infty, a]  \to  \chpsi_\infty(-\infty, a] \mbox{ as } j \to \infty, \  -\infty < a < 0  
  \label{mumu}
  \end{equation}
where the limit $\sigma$-finite measure $\chpsi_\infty$ on $(-\infty,0)$  is the ``reflected" version of the measure $\psi_\infty$ on $(0,\infty)$ at (\ref{muinf}):
\begin{equation}
 \chpsi_\infty(-\infty, a] := - \log (1 - e^a), \quad
\chf_\infty(a) := \sfrac{e^a}{1 - e^a}, \ -\infty < a < 0  .
\label{phiphi}
\end{equation}
So this is the origin of the subordinator approximation heuristic. 
But a more striking fact is that, as well as providing an approximation within  $\CTCS(n)$, we shall show in Section \ref{sec:sub} that the subordinator arises {\em exactly} within a 
limit structure  $\CTCS(\infty)$.

\subsection{The discrete time setting}
\label{sec:DTsetting}
The arguments above suggest that the continuous-time model is somewhat more tractable than the discrete-time model -- in a sense, in formulating the
continuous-time model we have already done the scaling that leads to the subordinator approximation for $\log \bX$.
However there are parallel results in discrete time.
Here is the  analog of Theorem \ref{TNorm} 
(proof methods discussed below).

\begin{Theorem}
\label{Lnorm}
\[ \frac{\Ex[L_n]}{\log^2 n} \to \frac{1}{2\zeta(2)} \mbox{ and } 
 \frac{L_n- \frac{1}{2\zeta(2)}  \log^2 n}{\log^{3/2} n} \to_d 
 % \Longrightarrow      
 \mathrm{Normal} \left(0,\tfrac{2\zeta(3)}{3\zeta^3(2)}\right) .
\]
\end{Theorem}

\subsubsection{Heuristics for $\log^2 n$}
\label{sec:log2}

%{\color{blue}  New material.}

If one starts discussing the beta-splitting family
by saying that the mean leaf height in discrete time is order $\log^2 n$,
that order seems surprising.
Here is a heuristic explanation.

Start with the subordinator approximation heuristic 
\[ \log X^{(n)}_t \approx \log n - \frac{\pi^2}{6} t \]
which explains the continuous-time mean $\frac{6}{\pi^2} \log n$.
Then observe that 
the mean number of steps of the discrete chain associated with a continuous-model time increment $\delta t$ 
is $h(X^{(n)}_t) \cdot  \delta t \approx (\log n - \frac{\pi^2}{6} t) \delta t .$
So the approximate discrete height is the integral
\[ \int_0^{6/\pi^2 \cdot \log n} \Bigpar{\log n - \frac{\pi^2}{6} t} \ dt 
= \frac{3}{\pi^2} \log^2 n .\]

See also Section \ref{sec:OP}.

\subsection{Proof methods for the CLTs for leaf height}
\label{sec:proofCLT}

%{\color{blue} From previous version II, slightly edited to include 5'th  proof.}

We currently know 5 proofs of the asymptotic normality in
Theorems \ref{TNorm} and/or  \ref{Lnorm}, described below in chronological order of discovery.

{\em Proof 1.} The first proof we found, given for the record in Section \ref{sec:Proof1}, is a direct attempt to justify the approximation
\eqref{approx}  so that one can apply a martingale CLT to prove Theorem \ref{TNorm}.
  This is in principle straightforward but seems quite tedious and lengthy in detail.

{\em Proof 2.} The second proof we found \cite[Theorem 1.7]{beta1} is via an analysis of recursions for the Laplace transforms of $D_n$.
The full proof of both theorems (and many other results indicated later) by this methodology appears in the article 
\cite{beta1}.  To illustrate that methodology, in Section \ref{sec:recurse} we show (following the first steps of the proof 
of \cite[Proposition 2.1]{beta1}) 
how to prove
\begin{equation}
\Ex[D_n]= \sfrac{6}{\pi^2}\log n+O(\log\log n) \mbox{ as } n \to \infty .
\label{loglog}
\end{equation}

{\em Proof 3.}  
%The simple form \eqref{lambda-rates} of the transitions suggests that the HD chain might have arisen previously in some different context, but we have not found any reference.  
There is a general {\em contraction method}  \cite{NR04} which has been used to prove convergence in distribution for other recursively-defined  structures.
Kolesnik uses that method to prove  Theorems \ref{TNorm} and \ref{Lnorm}:
see  \cite[Theorem 3.1 and Theorem 4.1]{kolesnik}.
That method also gives rates of convergence in the Zolotarev metric $\zeta_3$.

{\em Proof 4.}
Iksanov \cite{iksanovCLT} shows that 
Theorems \ref{TNorm} and \ref{Lnorm}, including a joint normal limit,
can be derived from known results
\cite{GI12} in the theory of {\em regenerative composition structures}.  To quote \cite{iksanovCLT}, they 
``exploit a connection with an infinite ``balls-in-boxes’' scheme, a.k.a.\ Karlin’s occupancy scheme in random environment".
This is an exact relationship for finite $n$, unlike in previous proofs,
thus allowing a shorter derivation of 
Theorems \ref{TNorm} and \ref{Lnorm} 
from known results. 
 See Section \ref{sec:sub} for a brief discussion.
 
 {\em Proof 5.} 
As a corollary of  Theorem 1.4 of \cite{beta4-arxiv}, quoted below as
Theorem \ref{Tmgf1}, which gives sharp estimates of $\Ex[e^{zD_n}]$.

Why are we mentioning 5 proofs? 
As discussed in Sections \ref{sec:depends} and \ref{sec:leaf-ht} below,
a more refined analysis of correlations between leaf-heights is needed for analysis of the tree height.
It is not clear which of these techniques might be most applicable for tackling such possible extensions.

\subsection{Summary of sharper results from \cite{beta1} and \cite{beta4-arxiv}}
\label{sec:summary}
A variety of sharper and additional results in the spirit of  Theorems \ref{TNorm} and  \ref{Lnorm} 
have been established, first in \cite{beta1}  by 
``analysis of recursions" 
and second in \cite{beta4-arxiv} by Mellin transforms. 
The proofs are technically intricate, and here we will merely list the results.

%\begin{Theorem}[\cite{beta1}]
%\label{thmA}
%\begin{align}
%\mathbb E[D_n] &= \tfrac{1}{\zeta(2)} \log n + O(1) \label{Dn0}\\
%\var(D_n)&=(1+o(1))\tfrac{2\zeta(3)}{\zeta^3(2)}\log n . \label{var0}
%\end{align}
%\end{Theorem}
 %And even sharper results hold under a certain {\em h-ansatz} described in \cite{beta1}.

%{\color{blue} Material below copied from paper 4.}

Here are results from  \cite{beta4-arxiv} proved by Mellin transforms. 
Let $\psi$ be the digamma function $\psi(z) := \Gamma^\prime(z)/\Gamma(z)$.
Let $0>s_1>s_2>\dots$ be the negative roots of $\psi(s)=\psi(1)$.
Recall that $\zeta(2) = \pi^2/6$ and $\zeta(3) \doteq 1.202$,
and note that $\sim$ in the results below denotes asymptotic expansion%
\footnote{In the sense that  the error when approximating
with a partial sum is of the order of the largest (non-zero) omitted term.}.

\begin{Theorem}[\cite{beta4-arxiv} Theorem 1.1]
\label{TD1}
As $n \to \infty$
  \begin{align}\label{aw21}
  \Ex[D_n] \sim 
\frac{6}{\pi^2}\log n
+\sum_{i=0}^\infty c_{i} n^{-i}
+\sum_{j=1}^\infty\sum_{k=1}^\infty c_{j,k}\,n^{-|s_j|-k}
\end{align}
for some coefficients $c_i$ and $c_{j,k}$ that can be found explicitly;
in particular, $c_1=-3/\pi^2$ and 
%\SJd
\begin{equation}
c_0 =  {\frac{\zeta(3)}{\zeta^2(2)}+\frac{\gamma}{\zeta(2)}} \doteq  0.795155660439.
\label{c01}
\end{equation}
\end{Theorem}
%Theorem \ref{TD1} is proved in Sections \ref{sec:EDfull} and \ref{SED2}.
This improves on \SJ\cite[Theorem 1.1 and Proposition 2.3]{beta1} which gave
the initial terms 
$\frac{6}{\pi^2}\log n + c_0 + c_1n^{-1}$ 
with the explicit formula for $c_1$ but not the formula\footnote{Before knowing the exact value of $c_0$, numerics gave an estimate that agrees with \eqref{c01} to 10 places.} 
for $c_0$.
The discussion of the  \emph{h-ansatz} in \cite{beta1} assumes that only integer  powers of $1/n$ should appear in the expansion \eqref{aw21}, 
but in fact (surprisingly?)\ the spectrum of powers of $n$ appearing is $\set{-i:i\ge0}\cup\set{-(|s_j|+k):j\ge1,k\ge1}$.

\begin{Theorem}[\cite{beta4-arxiv} Theorem 1.2]
\label{TL1}
As $n \to \infty$
\begin{multline}\label{sw2x1}
  \Ex[L_n]\sim
\frac{3}{\pi^2}\log^2n 
+ \Bigpar{\frac{\zeta(3)}{\zeta^2(2)}+\frac{\gamma}{\zeta(2)}}\log n
+b_0
\\
+\sumk a_k n^{-k}\log n
+\sumk b_k n^{-k}
+ \sumj \sumk c_{j,k}n^{-|s_j|-k}
\end{multline}
for some computable constants $a_k$, $b_k$, $c_{j,k}$;
in particular,
\begin{align}\label{hw211}
  b_0 = 
  \frac{3\gam^2}{\pi^2}
+\frac{\zeta(3)}{\zeta^2(2)}\gamma+
\frac{\zeta^2(3)}{\zeta^3(2)}
+\frac{1}{10}
\doteq
0.78234
%0.782344256369698
.\end{align}
\end{Theorem}
%Theorem \ref{TL1} is proved in Sections \ref{SEL1} and \ref{SEL2}.
The first term $\frac{3}{\pi^2}\log^2n$ was observed long ago in \cite{me_clad}.
Using the recurrence method, the coefficient for $\log n$ 
was found in  \cite[Theorem 1.2]{beta1}; that coefficient 
equals the constant term $c_0$ in the asymptotic expansion \eqref{aw21} of
$\Ex[D_n]$.

\begin{Theorem}[\cite{beta4-arxiv} Theorem 1.3]
\label{TvarD1}
As $n \to \infty$
  \begin{align}\label{kb91}
\var[D_n]
=
\frac{2\zeta(3)}{\zeta^3(2)}\log n
+ \frac{2\zeta(3)}{\zeta^3(2)}\gamma
+\frac{5\zeta^2(3)}{\zeta^4(2)}
-\frac{18}{5\pi^2}
+ O\Bigpar{\frac{\log n}{n}}
.\end{align}
\end{Theorem}
The leading term $\frac{2\zeta(3)}{\zeta^3(2)}\log n$ was found in \cite[Theorem 1.1]{beta1} by the recursion method.
Higher moments of $D_n$ are also discussed in  \cite{beta4-arxiv}.
%Section \ref{SSEDk}.

\begin{Theorem}[\cite{beta4-arxiv} Theorem 1.4]
\label{Tmgf1}
For $-\infty < z < 1$ there is a 
unique real number $\rho(z)$ in $(-1,\infty)$ satisfying
 $ \psi\bigpar{1+\rho(z)}-\psi(1)=z$.
Then
\begin{align}\label{qz31}
\Ex[e^{zD_n}] &
=
\frac{-z\gG(-\rho(z))}{\psi'(1+\rho(z))}\frac{\gG(n)}{\gG(n-\rho(z))}
+O\bigpar{n^{-\gsx}}
\end{align}
and
\begin{align}\label{qz331}
\Ex[e^{zD_n}] &
=
\frac{-z\gG(-\rho(z))}{\psi'(1+\rho(z))}n^{\rho(z)}
\cdot\bigpar{1+O\bigpar{n^{-\min(1,\gsx+\rho(z))}}}
\end{align}
where 
$\gsx   =1-s_1(1+\psi(1))=1+|s_1(\psi(2))|  \doteq 1.457$.

Furthermore, \eqref{qz31} holds uniformly for $z<1-\gd$ for any $\gd>0$,
and \eqref{qz331} holds uniformly for $z$ in a compact subset of $(-\infty,1)$.
\end{Theorem}
%Theorem \ref{Tmgf1} is proved in Section \ref{SSmgfreal}.
This improves on bounds in  \cite[Section 2.7]{beta1}.

As a corollary of Theorem \ref{Tmgf1}, a new proof of the CLT for $D_n$, our Theorem \ref{TNorm}, is given in \cite{beta4-arxiv}.
As mentioned before, several previous proofs have been given.

Another corollary of Theorem \ref{Tmgf1} is the following large deviation result.
%proved in Section \ref{sec:LD}.
\begin{Theorem}[\cite{beta4-arxiv} Theorem 1.6]
\label{Tdev1}
As \ntoo, we have:
  \begin{align}\label{tdev11}
  \Pr(D_n<x\log n)& = n^{-\gLX(x)+o(1)}, \qquad\text{if}\quad 0<x\le x_0,
\\ \label{tdev21}
\Pr(D_n>x\log n)& =n^{-\gLX(x)+o(1)}, \qquad\text{if}\quad x_0\le x< x_1,
\\   \label{tdev31} 
  \Pr(D_n>x\log n)& \le n^{-\gLX(x)+o(1)}, \qquad\text{if}\quad x\ge x_1,
  \end{align}
  where $x_0 = 1/\zeta(2), \ x_1 = 1/(\zeta(2)-1)$ and $\Lambda^*$ (defined
  at \cite[(12.29)]{beta4-arxiv})
is such that
$\Lambda^*(x) > 0$ for $x \neq x_0$, 
  and $\Lambda^*(x) = x-1$ for $x \ge x_1$.
\end{Theorem}

Theorem \ref{Tdev1} improves estimates for the upper tail in \cite[Theorem 1.4]{beta1}. 
%and \cite{beta2-arxiv}.

\paragraph{The recurrence method.}
The theme of \cite{beta1} was to exploit ``the recurrence method", that is to take a sequence defined by a recurrence
and then upper and lower bound the unknown sequence by known sequences.
Note that there is indeed a simple recurrence \eqref{tn2} for $ \Ex[D_n]$, 
and we give an illustration of the use of this methodology in Section \ref{sec:recurse}.
This method was used in \cite{beta1} for many of the problems in this paper, as indicated in the references above, though (where applicable) the Mellin transform method seems to yield sharper results.
%The results above are quoted from \cite{beta4-arxiv} as sharpenings of results from \cite{beta1}.
However we were unable to use the methodology in \cite{beta4-arxiv} for analysis of $L_n$, the discrete absorption time, beyond Theorem \ref{TL1}. 
So we now quote parallel results for $L_n$, taken from  \cite{beta1}
and \cite{kolesnik}.
One is the CLT (our Theorem \ref{Lnorm}), which is  \cite[Theorem 1.7]{beta1}.
Also
\begin{Theorem}[{\cite{kolesnik} Lemma 3.3}] 
\label{T:varL}
%[\cite{beta1} Theorem 1.2]
\[
\var(L_n)=\frac{2\zeta(3)}{3\zeta^3(2)}\log^3 n
+\Bigpar{\frac{4\zeta(3)^2}{\zeta(2)^4}
-\frac{3\zeta(4)}{\zeta(2)^3}
+\frac{2\gamma\zeta(3)}{\zeta(2)^2}
-\frac{1}{2\zeta(2)}
}\log^2 n
+O(\log n). 
\]
\end{Theorem}
This result from \cite{kolesnik} improves \cite[Theorem 1.2]{beta1} 
where $\var(L_n)$ was found up to an error $O(\log^2 n)$;
\cite{kolesnik} uses also the recurrence method. 

Finally,  a large deviation estimate.
\begin{Theorem}[\cite{beta1} Proposition 2.12]
\label{T:10}
 For $\eps>0$,
\[
\Pr \bigl(L_n\ge\tfrac{3}{\pi^2}(1+\eps)\log^2n\bigr)=O\bigl(n^{-\Theta(\eps)}\bigr).
\]
\end{Theorem}

\subsection{An illustration of analysis of recursions}
\label{sec:recurse}

%{\color{blue} Copied from previous version of paper 2.}

Here we  illustrate the ``analysis of recursions" methodology used in \cite{beta1} for many results similar to those above.
We will copy the first steps of the proof in \cite{beta1} of a slightly weaker form of Theorem \ref{TD1};
these first steps are enough to reach the weaker result stated  in
Proposition \ref{thm1} below.
The proof uses only the elementary recurrence for $t_n := \Ex D_n$:
\begin{equation}
t_n =    \sfrac{1}{h_{n-1}} (1 + \sum_{i=1}^{n-1} \sfrac{t_i}{n-i} )
\label{tn2}
\end{equation}
with $t_1 = 0$.
One can see the first order result $\Ex[D_n] \sim  \sfrac{6}{\pi^2}\log n$ heuristically by plugging $c \log n$ into the recursion and taking the natural first-order
approximation to the right side; the constant $c$ would emerge as the inverse of the constant
\begin{equation}
\int_0^1\sfrac{\log(1/x)}{1-x}\,dx=\zeta(2)=\sfrac{\pi^2}{6}
\label{zeta2}
\end{equation}
and indeed this is how it emerges in the proof below.
\begin{Proposition}\label{thm1}
\[
\Ex[D_n]= \sfrac{6}{\pi^2}\log n+O(\log\log n) \mbox{ as } n \to \infty .
\]
\end{Proposition}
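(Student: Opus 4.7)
The plan is to substitute the ansatz $t_n = c\log n$ into the recurrence and identify $c = 1/\zeta(2)$ by matching leading terms, then bootstrap to the error bound via induction. The key algebraic identity, obtained by setting $j = n-i$, is
\[
\sum_{i=1}^{n-1}\frac{\log i}{n-i} \;=\; h_{n-1}\log n + \sigma_n, \qquad \sigma_n := \sum_{j=1}^{n-1}\frac{\log(1-j/n)}{j}.
\]
Since $\sigma_n = (1/n)\sum_j g(j/n)$ for $g(x) := \log(1-x)/x$, and $g$ is integrable on $(0,1)$ with $\int_0^1 g(x)\,dx = -\zeta(2)$ (which is \eqref{zeta2} after $x\mapsto 1-x$), we have $\sigma_n \to -\zeta(2)$. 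Substituting $t_n = c\log n + u_n$ into \eqref{tn2} shows the leading $\log n$ terms cancel iff $c\zeta(2) = 1$, forcing $c = 6/\pi^2$.

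With $c$ fixed, $u_n := t_n - c\log n$ satisfies the affine recurrence
\[
u_n \;=\; \mathrm{err}_n + \frac{1}{h_{n-1}}\sum_{i=1}^{n-1}\frac{u_i}{n-i}, \qquad \mathrm{err}_n := \frac{c(\sigma_n+\zeta(2))}{h_{n-1}}.
\]
To bound $\mathrm{err}_n$ I would split the sum defining $\sigma_n$ at $j = \lfloor n/2\rfloor$: on the bulk $g$ is smooth so the quadrature error is $O(1/n)$, while near the singularity at $x = 1$ both the discrete tail and the integral tail are individually $O((\log n)/\sqrt n)$. Either way $|\mathrm{err}_n| = o(1/(\log n)^2)$, which is more than enough for what follows.

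The remaining and hardest step is to bootstrap this into $|u_n| = O(\log\log n)$ by induction, and this is where the main obstacle appears: the averaging operator $(Lu)_n := \frac{1}{h_{n-1}}\sum_i u_i/(n-i)$ has weights summing to $1$, so it fixes constants and no naive contraction is available. I would circumvent this using strict concavity of $\log\log$ together with Jensen's inequality. The weighted mean of $i$ is
\[
\sum_{i=1}^{n-1}\frac{i}{h_{n-1}(n-i)} \;=\; n-\frac{n-1}{h_{n-1}} \;=\; n\Bigl(1 - \tfrac{1+o(1)}{\log n}\Bigr),
\]
so Jensen yields
\[
\frac{1}{h_{n-1}}\sum_{i=2}^{n-1}\frac{\log\log i}{n-i} \;\le\; \log\log n - \frac{c_1}{(\log n)^{2}}
\]
for some absolute $c_1 > 0$ and all large $n$. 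This strict saving of $c_1/(\log n)^2$ comfortably dominates $|\mathrm{err}_n|$, so the hypothesis $|u_i| \le C(1 + \log\log i)$ closes the induction for $n \ge N_0$, with $C, N_0$ chosen to absorb the finitely many base cases and the sign issue at small $i$ where $\log\log i < 0$. The Jensen-driven strict contraction is the heart of the matter; everything else is bookkeeping around the Riemann-sum identification of $c$.
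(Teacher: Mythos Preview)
Your approach is correct in outline and genuinely different from the paper's. The paper never isolates the remainder $u_n$; instead it works directly with explicit envelope functions. For the lower bound it shows that $\tau_n = \sfrac{6}{\pi^2}\log n$ itself satisfies the recurrence inequality, via a one-line Riemann-sum comparison (the integrand in \eqref{zeta2} is positive and decreasing). For the upper bound it first obtains a crude $\Ex[D_n] \le 1+\log(n-1)$ by Jensen with the concave function $f(x)=1+\log(x-1)$, then refines to $\Ex[D_n] \le A(n_0)\log(nB(n_0))$ where $A(n_0) = \bigl(\int_{1/n_0}^{1}\tfrac{\log(1/x)}{1-x}\,dx\bigr)^{-1}$ is a slightly inflated constant, and finally optimizes by taking $n_0=\lceil\log n\rceil$, which produces the $O(\log\log n)$ term. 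Your route---closing the recurrence for $u_n$ directly via Jensen on the strictly concave $\log\log$---is more unified (one argument handles both signs simultaneously) and exposes a clean mechanism: the saving $\sim 1/(\log n)^{2}$ coming from $\bar\imath = n(1-1/\log n)$ is exactly the scale needed to absorb $\mathrm{err}_n$. The paper's approach, by contrast, is more modular and makes the lower bound essentially free, at the cost of the extra $n_0$-truncation device for the upper bound. One wobble in your sketch: splitting $\sigma_n$ at $j=\lfloor n/2\rfloor$ does not give tails of size $O((\log n)/\sqrt n)$---those tails are $O(1)$---so you need a second cut near the singularity (e.g.\ at $j=n-\sqrt n$) to reach the stated rate; but since any bound $\sigma_n+\zeta(2)=o(1/\log n)$ already suffices for your induction, this is a cosmetic repair.
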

\smallskip
\noindent
\begin{proof} 
The proof involves three steps.

\noindent
{\em Step 1.} We shall prove
\begin{equation}
\Ex[D_n]\ge \sfrac{6}{\pi^2}\log n, \quad n \ge 1 .
\label{step1}
\end{equation}
Setting $\tau_n=A\log n$ for $A = 6/\pi^2$, it suffices to show 
\begin{equation}
\label{new2}
  \frac{1}{h_{n-1}}\biggl(1+\sum_{k=1}^{n-1}\sfrac{\tau_k}{n-k}\biggr) \ge \tau_n, \quad n\ge 2,
\end{equation}
because then, by (\ref{tn2}) and induction on $n$, $\Ex[D_n]\ge \tau_n$ for all $n\ge 1$, establishing (\ref{step1}).
We compute
\begin{eqnarray*}
 \frac{1}{h_{n-1}}\biggl(1+\sum_{k=1}^{n-1}\tfrac{\tau_k}{n-k}\biggr) &=& \frac{1}{h_{n-1}}\biggl(1+\sum_{k=1}^{n-1}\tfrac{A\log k}{n-k}\biggr)\\
 &=& \frac{1}{h_{n-1}}\biggl(1+A(\log n) h_{n-1}+A\sum_{k=1}^{n-1}\tfrac{\log(k/n)}{n(1-k/n)}\biggr)\\
&=& \tau_n+\tfrac{1}{h_{n-1}}\biggl(1+A\sum_{k=1}^{n-1}\tfrac{\log(k/n)}{n(1-k/n)}\biggr)\\
&\ge&  \tau_n+\tfrac{1}{h_{n-1}}\biggl(1-A\int_0^1\tfrac{\log(1/x)}{1-x}\,dx\biggr).
\end{eqnarray*}
The inequality holds because the integrand is positive and decreasing. 
So by (\ref{zeta2}), the choice $A = 6/\pi^2$ establishes (\ref{new2}).

\medskip
\noindent
{\em Step 2.} 
 Let us prove 
 \begin{equation}
  \mathbb E[D_n]\le f(n)
%:= \max\{0,1+\log(n-1)\}. 
  \label{step2}
  \end{equation}
where
 \begin{equation}
f(x):= 
\begin{cases}
  x-1, &  x\le 2,
\\
1+\log(x-1), & x\ge2.
\end{cases}
  \label{step2b}
  \end{equation}
  This is true for $n=1$ since $\mathbb E[D_1]=0$.
So, similarly to \eqref{new2}, it is enough to show that  $f(n)$ satisfies  
\begin{equation}\label{new2.01}
f(n)\ge \frac{1}{h_{n-1}}\biggl(1+\sum_{i=1}^{n-1}\tfrac{f(i)}{n-i}\biggr),\quad n\ge 2.
\end{equation}
Since $f(x)$ is {\it concave\/}, we have
\begin{multline*}
 \frac{1}{h_{n-1}}\biggl(\,1+\sum_{i=1}^{n-1}\tfrac{f(i)}{n-i}\biggr)\le \tfrac{1}{h_{n-1}}+f\biggl(\tfrac{1}{h_{n-1}}\sum_{i=1}^{n-1}\tfrac{i}{n-i}\biggr)\\
 =\tfrac{1}{h_{n-1}}+f\bigl(n-\tfrac{n-1}{h_{n-1}}\bigr)\le \tfrac{1}{h_{n-1}}+f(n)-f'(n)\bigl(\tfrac{n-1}{h_{n-1}}\bigr),\\
 \end{multline*}
  which is exactly $f(n)$, since $f'(x)=\tfrac{1}{x-1}$ for $x\ge2$.
%The second inequality holds by concavity for $n \ge 3$ and by direct
%calculation for $n=2$. 

\medskip
\noindent
{\em Step 3.}
Let $n_0\ge 2$, and
\[
A=A(n_0):=\biggl(\int_{1/n_0}^1 \sfrac{\log(1/x)}{1-x}\,dx\biggr)^{-1},\quad B=B(n_0):=n_0^{\sfrac{2}{A\log 2}-1}.
\]
We shall prove 
\begin{equation}
\Ex [D_n]\le A \log(nB), \ n\ge 2. 
\label{step3}
\end{equation}
This inequality certainly holds for $n\le n_{\scriptscriptstyle 0}$, because, by (\ref{step2}), for those $n$
\[
\Ex [D_n]\le  \sfrac{2}{\log 2}\log n =A\log n\cdot\sfrac{2}{A\log 2} =A\log \Bigl(n^{\sfrac{2}{A\log 2}}\Bigr)\le A\log (nB).
\]
Therefore it suffices to show that $\tau_n:=A\log (nB)$ satisfies
\begin{align}
   \frac{1}{h_{n-1}}\biggl(1+\sum_{i=1}^{n-1}\sfrac{\tau_i}{n-i}\biggr) \le
  \tau_n, \quad n>n_0.
\label{jesper}
\end{align}
Plugging $\tau_i = \tau_n - A \log (n/i)$  into the  left side of 
\eqref{jesper}, we rewrite it as follows, cf.\ Step 1:
\begin{eqnarray*}
   \frac{1}{h_{n-1}}\biggl(1+\sum_{i=1}^{n-1}\sfrac{\tau_i}{n-i}\biggr)       &=& 
   \frac{1}{h_{n-1}}\biggl(1+\tau_n\cdot h_{n-1}-A\sum_{i=1}^{n-1}\sfrac{\log(n/i)}{n(1-i/n)}\biggr) \\
&\le &\tau_n+\sfrac{1}{h_{n-1}}\biggl(1-A\int_{1/n}^1\sfrac{\log(1/x)}{1-x}\,dx\biggr)\\
&\le & \tau_n+\sfrac{1}{h_{n-1}}\biggl(1-A\int_{1/n_0}^1\sfrac{\log(1/x)}{1-x}\,dx\biggr)\\
&=& %A\log(nB)=
\tau_n.
\end{eqnarray*}
This establishes \eqref{jesper} and thus (\ref{step3}).

\medskip
\noindent
{\em Step 4.}
Note that $A(n_0) \le \frac{6}{\pi^2} + O(\sfrac{\log n_0}{n_0})$ and $\log B(n_0) = O(\log n_0)$.
So choosing $n_0=\lceil\log n\rceil$ we have 
\[ A(n_0) \log (nB(n_0)) 
\le 
\left( \sfrac{6}{\pi^2} + O( \sfrac{\log \log n}{\log n}) \right) \ \ 
\left( \log n + O(\log \log n) \right) .
\]
So (\ref{step3}) establishes the upper bound in Theorem \ref{thm1}, and (\ref{step1}) establishes the lower bound.
\qed
\end{proof}

\begin{Remark}
The simple idea is to replace a recurrence equality by a recurrence {\it inequality\/} for which an explicit solution can be found and then to use it to upper (or lower) bound the  solution of the recurrence {\it equality\/}. 
And one can use  probabilistic heuristics to guess the asymptotic behavior, and then check that a slightly larger (or smaller) function satisfies the recurrence inequality.
\end{Remark}

 \newcommand{\ent}{\mathrm{ent}}

\newcommand{\TT}{\mathcal T}
 \newcommand{\tre}{\mathbf{t}}
\subsection{Another illustration:  The $B_2$ (entropy) index}
\label{sec:balance}

In section \ref{sec:tree_balance} we will study, mostly heuristically, a variety of  {\em tree balance indices}.
Here we study one, as another illustration of the method of analysis of recursions.

%An authoritative comprehensive study of numerous {\em tree balance indices}, that is statistics which indicate balance, is given in the  monograph \cite{fischer2023tree},  
%which (amongst much other material) records what is known about the mean and variance of each statistic under the two classical probability models, Yule and uniform.
%It is natural to ask about the distribution of indices under our 
%model.  % \footnote{These balance indices refer to cladograms, so we are concerned with the discrete-time form of our model.}
%and we provide some discussion in the remainder of this section \ref{sec:balance}.
%More systematic study might be a suitable topic for a future Ph.D. thesis.

%Here we mention only one specific index.
On a realization $\tre$ of a binary tree, 
by moving away from the root and at each branchpoint 
taking the right or left branch with equal probability, 
one ends with a
(typically non-uniform) probability distribution $\nu_\tre$ on the  leaves $\ell$ of $\tre$.
Now consider the entropy of this distribution\footnote{Convenient here to take {\em logs} in base 2.}
\[ \ent(\nu_\tre) : = - \sum_\ell \nu_\tre(\ell) \log_2 \nu_\tre(\ell)\]
and abuse notation by writing $\ent(\tre) := \ent(\nu_\tre)$. 
This is called the $B_2$ index in the phylogenetics literature.  
A comprehensive account of this index $B_2$ appears in  \cite{franccois2021revisiting}.
Recall the intuitive interpretation of the entropy of a distribution $\nu$ as indicating that the distribution is 
``as random as" the uniform distribution on $2^{\ent(\nu)}$ elements.

What is the distribution of $\ent(\TT_n)$ for our random tree model $\TT_n$?
The expectation $B_2(n) := \Ex [\ent(\TT_n)]$ can in principle be studied via the recursion \eqref{Bnrec}  below, 
a method used for other aspects of $\TT_n$ in \cite{beta1}.
It is easy to check  \cite{franccois2021revisiting} the equivalent definition 
\[ \ent(\tre) = \sum_\ell d(\ell) 2^{- d(\ell)} \]
where $d(\ell)$ is the height of leaf $\ell$.
From that representation one obtains a very simple recursion for the 
expectation 
$B_2(n) := \Ex [\ent(\TT_n)]$ in our model
\begin{equation}
B_2(n) = 1 + \tfrac{1}{2} \sum_{i=1}^{n-1} q(n,i)(B_2(i) + B_2(n-i)) 
\label{Bnrec} 
\end{equation}
with $B_2(1) = 0, B_2(2) = 1$.

%In principle one can study  \eqref{Bnrec}  by the 
%``analysis of recursions" methodology illustrated in the previous section.  So
%\begin{OP}
%Determine the asymptotic behavior of $B(n)$.
%end{OP}

It is straightforward to calculate $B_2(n)$ numerically from the recursion.
It is convenient to record the ``equivalent uniform distribution size" $2^{B_2(n)}$, shown in the table below.
These numerical values suggest that $2^{B_2(n)}$ grows slightly faster than $\log n$ but slower than $\log^2 n$.

\begin{table}[h!]
 \begin{tabular}{c|ccccccccc}
 \hline
$n$ &16 & 32 & 64 & 128 & 256 & 512 & 1024  & 2048 & 4096\\
$2^{B_2(n)}$ & 7.99 & 10.75 & 13.74 & 16.92 & 20.26 & 23.72 & 27.32 & 31.03 & 34.85\\
\hline
\end{tabular}
\caption{Numerical values of $B_2(n)$.}
\label{table:1}
\end{table}

\subsubsection{Easy bounds on $B_2$ via recursion}
\label{sec:BB2}
In our context there is a simple sequence (\ref{bn-def}) that grows as order $\log \log n$.
Using the recurrence method we luckily get this sequence as a lower bound for $B_2(n)$, together with a cruder upper bound.
\begin{Proposition}
\label{P:Bn}
Let
\begin{equation}
 b(n) := \sum_{i=1}^{n-1} \sfrac{1}{ih_i} . 
 \label{bn-def}
 \end{equation}
Then
\begin{align}
  \label{b2b}
b(n) \le B_2(n) \le h_{n-1}  
\end{align}
for all $n \ge 1$.
 \end{Proposition}
We have $b(1) = 0$ and $b(2) = 1$, and $b(n) \sim \log \log n$.

\begin{Remark}
Boris Pittel (private communication) improves these bounds enough to prove $B_2(n) \sim 2 \log \log n$.
%In the proof of the corollary, the inequality for the upper bound seems sharper than the inequality for the lower bound, suggesting that 
%*the upper bound in the corollary is sharper than the lower bound.
\end{Remark}

%\begin{OP}
%\label{OP:bC}
%(i) Improve the bounds in Corollary \ref{C:B2}.\\
%(ii) Is there some ``obvious" proof of the upper bound?
%\end{OP}

\noindent
 {\bf Proof of Proposition \ref{P:Bn}.}
 We prove first the lower bound.
 Because $B_2(n)$ satisfies the equality (\ref{Bnrec}), 
and $B_2(1)=b(1)$,
it suffices to show that $b(n)$ satisfies the corresponding inequality
\begin{align}
  \label{karin}
 b(n) \le 1 + \frac{1}{2} \sum_{m=1}^{n-1} q(n,m) (b(m) + b(n-m)) := b^*(n),
  \mbox{ say}.
\end{align}
We calculate, using $h_{n-i-1}\ge h_{n-1}-h_i$,
\begin{eqnarray*} 
b^*(n) - 1 &=&  \sum_{m=1}^{n-1} q(n,m) b(m) \quad\mbox{ by symmetry } \\
 &=&  \  \frac{1}{2h_{n-1}} \ \sum_{m=1}^{n-1}  (\frac{1}{m} + \frac{1}{n-m} ) \    \sum_{i=1}^{m-1} \frac{1}{ih_i}   \\
&=&  \  \frac{1}{2h_{n-1}} \ \sum_{i=1}^{n-2} \frac{1}{ih_i}  \sum_{m=i+1}^{n-1} (\frac{1}{m} + \frac{1}{n-m} ) \\
&=& \  \frac{1}{2h_{n-1}} \ \sum_{i=1}^{n-1} \frac{1}{ih_i}  (h_{n-1} - h_i
    +  h_{n-i-1} ) \\
&\ge& \  \frac{1}{h_{n-1}} \ \sum_{i=1}^{n-1} \frac{1}{ih_i}  (h_{n-1} - h_i) \\
 &=&   b(n) -   \frac{1}{h_{n-1}} \ \sum_{i=1}^{n-1}  \frac{ h_i}{ih_i} \\
&=&   b(n)  - 1,
 \end{eqnarray*}
which proves \eqref{karin}.

For the upper bound, we similarly consider
\begin{align}
%\hskip2em&\hskip-2em
\frac{1}{2} \sum_{m=1}^{n-1} q(n,m) (h_{m-1} + h_{n-m-1})
%\notag\\
  &=  \sum_{m=1}^{n-1} q(n,m) h_{m-1}\notag\\
 &=  \  \frac{1}{2h_{n-1}} \ \sum_{m=1}^{n-1}  (\frac{1}{m} + \frac{1}{n-m} ) \    \sum_{i=1}^{m-1} \frac{1}{i}   \notag\\
&=  \  \frac{1}{2h_{n-1}} \ \sum_{i=1}^{n-2} \frac{1}{i}  \sum_{m=i+1}^{n-1} (\frac{1}{m} + \frac{1}{n-m} ) \notag\\
&= \  \frac{1}{2h_{n-1}} \ \sum_{i=1}^{n-1} \frac{1}{i}  (h_{n-1} - h_i
    +  h_{n-i-1} ) \notag\\
&=  \ \sum_{i=1}^{n-1} \frac{1}{i}
-
\frac{1}{2h_{n-1}} \ \sum_{i=1}^{n-1} \frac{1}{i}  (h_i+h_{n-1} - h_{n-1-1})
\notag\\&
\le h_{n-1}-1,
\end{align}
where the final inequality is checked numerically for $n\le 20$; 
for larger $n$ it follows because
\begin{align}
  \sum_{i=1}^{n-1} \frac{1}{i}  h_i \ge 2h_{n-1}
\end{align}
for $n=21$, and thus by induction for all $n\ge21$.
\qed

\subsection{Heuristics: correlation between leaf heights and the original interval-splitting model}
\label{sec:depends}
%{\color{blue} Rewritten material.}

In studying the height of a uniform random leaf via the HD chain, we are of course dealing with two levels of randomness, the realization of the random tree and then the random choice of leaf.
To study the interaction between the two levels of randomness, it is natural to consider the correlation between leaf heights.
At a heuristic level (we leave the proof as a presumably easy open problem, stated below) this is straightforward, as follows.
Consider the interval-splitting representation of $\CTCS(n)$, but (for simplicity) imagine the interval as a cycle.
Consider the heights $D_n^{(1)}$ and $D_n^{(2)}$ 
of two leaves a distance $r_n$ apart in the cycle.
Each is distributed as $D_n$,
 the time height of the uniformly random leaf, but are not independent.
 We study the correlation coefficient defined by
\[
\rho(n,r_n) :=\tfrac{\Ex[D_n^{(1)} D_n^{(2)}]-\Ex^2[D_n]}{\text{Var}(D_n)}.
\]
From the Brownian motion formalization of the subordinator approximation (Proposition \ref{PB}),
one sees that the correlation should be asymptotically the same as the correlation between
(for independent Brownian motions)
$B_{s_0} + (B^{(1)}_1 -  B^{(1)}_{s_0})$ 
and
$B_{s_0} + (B^{(2)}_1 -  B^{(2)}_{s_0})$ 
where $s_0 := s_0(n,r_n)  := \frac{\log n - \log r_n}{\log n}$.
But that correlation is simply $s_0$ itself.
So it should be quite straightforward to prove
\begin{OP}
\label{OP:alpha}
Prove that, for  $\frac{\log r_n}{\log n} \to \alpha \in [0,1]$, we have
\begin{equation}
\rho(n,r_n) \to 1 - \alpha .\label{rhon}
\end{equation}
\end{OP}
For $\alpha = 1$ this is essentially \cite[Theorem 2.6]{beta1}.\footnote{The arXiv version 3 preprint of  \cite[Theorem 2.6]{beta1} is incorrect: a correct argument appears in the published version \cite{beta1}.}

Here is a related issue.
In the original interval-splitting description of the model (without randomizing leaf labels),
one could study (for example) the height of leaf $1$.
Indeed, consider the subordinator approximation calculation in section \ref{sec:approxc}, 
but work with the discrete time process $(Z_t, t = 1,2,\ldots) $ giving the
sequence of clade sizes along the route from the root, taking always the
left-side split.
This gives a Markov chain with transition probabilities $q(m,i)$, in
contrast to the size-biased probabilities \eqref{qstar} for a random leaf.
A quick back-of-an-envelope calculation shows that conditioned on the left
subclade being the smallest,
$\log Z_{t+1} \approx U\log Z_t$ where $U$ is Uniform$(0,1)$ and
independent of $Z_t$, while $\log Z_{t+1}\approx\log Z_t$ if the left
subclade is the largest. So taking logarithms again and approximating with a
renewal process suggests that the height of leaf $1$ in $\DTCS(n)$ should be
around $2 \log\log n$,
in contrast to the order $\log^2 n$ height of the uniform random leaf. 
\begin{OP}
\label{OP:original}
In the original interval-splitting model, analyze the distribution of the height of the 
leaf $i(n)$ in $\DTCS(n)$ and $\CTCS(n)$.
\end{OP}

\subsection{The tree height}
\label{sec:height?}

%{\color{blue} Copied from previous version of paper 2.}

Write $D^*_n$ for the height of the random tree  $\CTCS(n)$ itself, that is the maximum leaf height.
\begin{Proposition}
\label{P:height}
\[ \Pr(D^*_n > (2 + \eps) \log n) \to 0 \mbox{ for all } \eps > 0  \]
\[ \Ex[D^*_n] \le 1 + 2 \log n .\]
\end{Proposition}
This has a direct ``stochastic calculus" proof, which we give in the next section as an illustration of that methodology. 
The first assertion  also follows from Theorem \ref{Tdev1}, because 
\[ \Pr(D^*_n \ge (2 +\eps) \log n) 
\le n \Pr(D_n \ge (2 +\eps) \log n) \le
n \cdot n^{-1 - \eps + o(1)} = n^{- \eps + o(1)} .
\]
A similar argument is in \cite[Theorem 1.4]{beta1}.

\subsection{An illustration of stochastic calculus: a bound for the tree height}
\label{sec:bdheight}

%{\color{blue} Copied from previous version of paper 2.}

We can replace the upper tail bound in Theorem \ref{Tdev1} by the following ``clean" bound, essentially similar to the $x > x_1$ case therein.
\begin{Lemma}
\label{L:ht}
$\Pr(D_n > t) \le (n-1)e^{-t} , \ 0 \le t < \infty . $
\end{Lemma}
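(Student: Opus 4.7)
The plan is to use the second-moment formula \eqref{EQn} together with symmetry, much as Proposition \ref{P:split} did for two leaves.

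First, I would reformulate the event $\{D_n > t\}$ in terms of clade sizes. Since the height of a leaf is the time at which its clade becomes a singleton, the uniform random leaf $\ell$ has $D_n > t$ if and only if $\ell$ lies in a clade of size at least $2$ at time $t$. Conditional on the tree, the chance of this is
\[
\sfrac{1}{n} \sum_{i : X_n(i,t) \ge 2} X_n(i,t),
\]
so
\[
\Pr(D_n > t) = \sfrac{1}{n} \Ex\biggl[ \sum_i X_n(i,t) \, 1_{\{X_n(i,t) \ge 2\}} \biggr].
\]

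Second, I would compare the sum inside the expectation to $Q_n(t) - n$. For each integer $m \ge 1$ one has the elementary inequality $m \, 1_{\{m \ge 2\}} \le m(m-1)$ (both sides vanish at $m=1$, and for $m \ge 2$ it is $2 \le m$). Applied termwise to the clade sizes this gives
\[
\sum_i X_n(i,t) \, 1_{\{X_n(i,t) \ge 2\}} \le \sum_i X_n(i,t)(X_n(i,t)-1) = Q_n(t) - n.
\]

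Third, I would plug in the exact formula \eqref{EQn} for $\Ex[Q_n(t)]$, obtaining
\[
\Pr(D_n > t) \le \sfrac{1}{n}\bigl(\Ex[Q_n(t)] - n\bigr) = \sfrac{1}{n}(n^2 - n)e^{-t} = (n-1)e^{-t},
\]
which is the claimed bound. There is really no main obstacle: the only substantive input is the identity \eqref{EQn}, which was the whole point of choosing the splitting rate $r(m,n) = h_{m-1}$, and the argument is just Markov's inequality applied to the sum-of-squares functional, exploiting that the count of leaves currently in non-singleton clades is dominated by $Q_n(t) - n$.
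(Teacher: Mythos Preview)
Your proof is correct. The paper's argument is equivalent but phrased in the size-bias chain language of section~\ref{sec:height}: from the rates \eqref{lambda-rates} one computes $\Ex[d(X^{(n)}_t-1)\mid\FF_t] = -(X^{(n)}_t-1)\,dt$, hence $\Ex[X^{(n)}_t-1] = (n-1)e^{-t}$, and then Markov's inequality gives $\Pr(D_n>t)=\Pr(X^{(n)}_t-1\ge 1)\le (n-1)e^{-t}$. Your route instead recycles \eqref{EQn}; the two are linked by the size-biasing identity $\Ex[X^{(n)}_t]=n^{-1}\Ex[Q_n(t)]$, and your termwise bound $m\,1_{\{m\ge 2\}}\le m(m-1)$, once averaged over the random leaf, is precisely the same Markov step. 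Your version has the mild advantage of not re-deriving an identity already established in section~\ref{sec:canon}; the paper's has the advantage of staying inside the one-leaf chain framework.
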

\begin{proof}
Write $(X_t \equiv X^{(n)}_t, 0 \le t < \infty)$ for the HD chain started at $X_0 = n$, so 
$D_n = \inf\{t: X_t = 1\}$.
From the transition rates,
\[
\Ex [dX_t \mid X_t = j] = - \sum_{i=1}^{j-1}\frac{j- i}{j-i} \, dt = - (j-1) dt \mbox{ on } \{X_t \ge 2\} .
\]
So setting $Y_t := X_t - 1$ we have $Y_0 = n-1$ and
\[ 
\Ex [dY_t \mid \FF_t]  = - Y_t  \, dt,\ 0 \le t < \infty .
\]
So 
\[ \Ex [Y_t] = (n-1)e^{-t} 
\]
and then
\[
\Pr(D_n > t) = \Pr(Y \geq 1) \le (n-1)e^{-t} .\]
\qed
\end{proof}

Now from Boole's inequality and Lemma \ref{L:ht}
\[ \Pr(D^*_n > t) \le n \Pr(D_n > t) \le n(n-1)e^{-t} \]
and so 
%\begin{Corollary}
\[ \Pr(D^*_n > (2 + \eps) \log n) \to 0 \mbox{ for all } \eps > 0  \]
\[ \Ex[D^*_n] \le \int_0^\infty \min(1,n(n-1)e^{-t}) \ dt \le 1 + 2 \log n \]
%\end{Corollary}
establishing Proposition \ref{P:height}.

\subsection{Heuristics for maximum leaf height}
\label{sec:leaf-ht}

%{\color{blue} Slightly edited  from previous version.}

One aspect where there seems to be a substantial qualitative difference between the  discrete and continuous time models concerns the tree height.
We will discuss the discrete case in Section \ref{sec:htDT}, and continue with the continuous case $D^*_n$ here.

From Theorem \ref{TD1},  Proposition \ref{P:height} and the obvious relation 
$\Ex [D^*_n] \ge \Ex [D_n]$ we know rigorously
\[ (6/\pi^2 + o(1)) \log n \le \Ex[D^*_n] \le (2 + o(1)) \log n . \]

%gives a $2 \log n$ upper bound for $D^*_n$, but we believe that 
% this ``2" is not the correct constant.
\begin{OP}
\label{OPmax}
Show that $D^*_n \sim c \log n$ in probability, and identify the constant $c$.
\end{OP}
We conjecture (not very confidently) that in fact
this holds for
\begin{equation}
 c :=  1 + \mu + \sigma^2/2 =   1.878...  .
 \label{Con:ht}
 \end{equation}
Here $ \mu := 1/\zeta(2) = 6/\pi^2 = 0.6079... ; \quad 
\sigma^2 := 2 \zeta(3)/\zeta^3(2)  = 0.5401...$
as in the CLT (Theorem \ref{TNorm}):
\[
\frac{D_n - \mu \log n}{\sqrt{\log n}} \to_d  \mathrm{Normal}(0, \sigma^2) \ \mbox{ as } n \to \infty .
\]
%where
%$ \mu := 1/\zeta(2) = 6/\pi^2 = 0.6079... ; \quad 
%\sigma^2 := 2 \zeta(3) = 2.4041...$.
We give the heuristic argument for \eqref{Con:ht} below.
This contains an essentially rigorous argument for 
\begin{equation}
 \Ex[D^*_n] \ge (1 + \mu + o(1)) \log n .
 \label{Con:ht2}
 \end{equation}
 An alternative (but weaker) lower bound is indicated in Section \ref{sec:LBgreedy} below.

A naive starting argument would be to believe that $D^*_n$ behaves as the maximum of $n$ i.i.d.\ samples from the approximating Normal distribution, which would give
\begin{equation}
D^*_n \approx \mu \log n + \sqrt{2 \log n} \times \sqrt{ \sigma^2 \log  n} 
= \bigpar{\mu+2^{1/2} \sigma } \log n = 1.65...  \log n .
\label{eq:normtail2}
\end{equation}
But (\ref{eq:normtail2}) is in fact not the right way to study $D^*_n$, because of the ``fringe" behavior in the continuous model.
Figure \ref{Fig:400} gives a hint about the issue, which is that there are some unusually long  terminal edges to a pair of leaves.
The $\CTCS(n)$ tree has order $n$ terminal edges to a pair of leaves; in the heuristics below 
we take as this as $n$ for simplicity (this should only affect the estimate of $D^*_n$ by $\pm O(1)$).
These $n$ edges have i.i.d.\ Exponential(1) distribution, and the (asymptotic) structure of the largest of these $n$ lengths is well-known: 
the lengths in decreasing order are
\[ (\log n + \xi_1, \log n + \xi_2, \log n + \xi_3, \ldots) \]
where $\infty > \xi_1 > \xi_2 > \xi_3 > \ldots > - \infty$
are the largest points of the Poisson point process on $\Reals$ with rate $e^{-x}$, so that $\xi_1$ has the standard Gumbel distribution
\[ \Pr(\xi_1 \le x) = \exp(-e^{-x}), \ - \infty < x < \infty . \]
By imagining that the longest such edge is  attached to the tree at the typical leaf depth $D_n$,
and using the Normal limit for the random leaf heights $D_n$, we assert a lower bound
\[ \Pr( D^*_n \le (\mu + 1) \log n - \omega_n \sqrt{ \log n} ) \to 0 \mbox{ for any } \omega_n \to \infty . \]
This construction could certainly be made rigorous to prove \eqref{Con:ht2}.
However, we conjecture that we get the correct behavior for $D^*_n$ by maximizing over all the $o(n)$ longest fringe edges.
Imagine that each of these longest fringe edges is attached to  the tree at independent depths $D_n$.
So
\[ D^*_n \approx  (\mu + 1) \log n + H_n \]
\[ H_n := \max_i (\xi_i + \nu_i) \]
for $(\xi_i)$ as above and $(\nu_i)$ i.i.d.\ Normal $(0, \alpha_n^2)$, with
 $\alpha^2_n =   \sigma^2 \log n$ in the notation of the Normal limit for $D_n$.

To analyze $H_n$,
write $\bar{\Phi}_n(\cdot)$ for the tail distribution function of Normal $(0, \alpha_n^2)$.
Because the pairs $(\xi_i, \nu_i)$ form a Poisson process we have
\begin{eqnarray}
- \log \Pr(H_n \le y) &=& \int_{- \infty}^\infty e^{-x}  \ \bar{\Phi}_n(y-x)\ dx \label{Hni}\\
&=& e^{-y} \int_{- \infty}^\infty e^{y-x}  \ \bar{\Phi}(\sfrac{y-x}{\alpha_n}) \ dx \nonumber \\
&=& e^{-y}  \alpha_n \int_{- \infty}^\infty e^{\alpha_n u}  \ \bar{\Phi}(u) \ du \nonumber
\end{eqnarray}
where $\bar{\Phi}$ refers to the standard Normal distribution, and $\phi(\cdot)$  below is its density.
The integrand above is maximized for $u$ around $\alpha_n$, so setting $v = u - \alpha_n$
and using  $\bar{\Phi}(z) \sim \phi(z)/z$ as $z \to \infty$,
\begin{eqnarray*}
&\approx &e^{-y} \alpha_n  \sfrac{1}{\sqrt{2\pi}}  \int_{- \infty}^\infty \exp(\alpha_n(v+\alpha_n)) \exp(-(v+\alpha_n)^2/2)\  \sfrac{1}{v+\alpha_n}  \ dv \\
&\approx& e^{-y}  \sfrac{1}{\sqrt{2\pi}}  \int_{- \infty}^\infty \exp(-v^2/2 + \alpha_n^2/2) \ dv \\
&=& e^{-y} e^{\alpha_n^2/2} .
\end{eqnarray*}
Putting all this together
\[ - \log \Pr(H_n \le y + \alpha_n^2/2) \approx e^{-y} 
\]
and the final conclusion is
\[ D^*_n \approx c \log n  + \xi; \quad c :=  1 + \mu + \sigma^2/2 =   1.878...     \]
where $\xi$ has standard Gumbel distribution.

Now this outline is too crude to believe that the $+ \xi$ term above is correct, but this value of $c$ seems plausible.
Here is one ``reality check" for the argument/calculation above.
Look back at the first integral \eqref{Hni}: for given $y$, the relevant values of $x$ are around $y - \alpha_n^2$.
The relevant values of $y$ are around $\alpha_n^2/2$, so overall the relevant values of $x$ are around $- \alpha_n^2/2$.
This corresponds to the $\nu_i$ around position $- \alpha_n^2/2$, and the number of such edges is around 
$\exp(\alpha_n^2/2) \approx n^{0.27}$.
So one implicit assumption was
\begin{quote}
If we pick $n^{0.27}$ random leaves from $\CTCS(n)$, then the distribution of their maximum height is essentially the same as $n^{0.27}$
picks from the corresponding Normal distribution.
\end{quote}
Is this plausible?
We have the correlation \eqref{rhon} between heights of leaves situated $n^{0.73}$ apart on the interval, but what is relevant here is 
the dependence between the {\em tails} of these leaf-height distributions,  which has not been studied.

\subsubsection{The greedy lower bound}
\label{sec:LBgreedy}

%{\color{blue} New material.}

One can also
consider the length $D^+_n$ of the path from the root that is chosen via a natural greedy algorithm,
taking the larger sub-clade at each split.
This is the absorption time for the modification of the HD chain
\eqref{lambda-rates} with
\begin{equation}
\lambda^+_{m,m-i} := 
\begin{cases}
\sfrac{1}{m-i} + \sfrac{1}{i}, & 1 \le i < m/2, %\ m \ge 2 .  
\\
\frac{1}{m/2} , &  i =m/2.
\end{cases}
\label{lambda+rates}
\end{equation}
Following
\eqref{approx} and Section \ref{sec:approxc},
this chain has an approximation
\begin{equation}
 \log n - Y^+_t    \mbox{ while } Y^+_t < \log n 
 \label{approx+}
 \end{equation}
  where  $(Y^+_t, 0 \le t < \infty)$ is the subordinator with  {\em L\'{e}vy measure} 
  $\psi^+_\infty$ and corresponding $\sigma$-finite density $f^+_\infty$ on
  $(0,\infty)$ defined as
\begin{equation}
 \psi^+_\infty[a, \infty) :=  - \log (1 - e^{-a}) - a,
\quad  f_\infty(a) :=  \sfrac{1}{1 - e^{-a}}, \quad
  \ 0 < a < \log 2,
  \label{muinf+}
  \end{equation} 
and supported on $[0,\log 2]$.

So the mean drift of this subordinator is
\begin{eqnarray} 
\rho^+ &: =& \int_0^{\log 2} \psi^+_\infty[a, \infty) \dd a 
=  \int_0^{\log 2} -  \log (1 - e^{-a}) \dd a - \frac{(\log 2)^2}2 
\nonumber \\
%&=& \int_{1/2}^1 - \frac{\log(1-y)}{y}  \dd y   - (\log 2)^2}2
%= \int_{1/2}^1 \sum_{i=1}^\infty \frac{y^{i-1}}{i} \dd y   - (\log 2)^2/2
%\nonumber \\
&=& \int_{0}^{\log2}  \sum_{i=1}^\infty \frac{e^{-ia}}{i} \dd a
- \frac{(\log 2)^2}2 %\nonumber \\
= \sum_{i=1}^\infty \frac{1-2^{-i}}{i^2} - \frac{(\log 2)^2}{2} = \frac{\pi^2}{12}  \label{psirho+}
\end{eqnarray}
where the final equality uses the dilogarithm \cite[25.12.6]{NIST}
\[ \sum_{i=1}^\infty (2^{-i})/i^2 := \mathrm{Li}_2 (1/2) = \pi^2/12 - (\log 2)^2/2 . \]
This suggests the asymptotics 
\[ \Ex[D^+_n] \sim c^+ \log n  \mbox{ for } c^+ = 1/\rho^+ = 12/\pi^2 . \]
Comparing with \eqref{Con:ht} we see that the constant $c^+ = 2\mu$ here is smaller than the constant $(1 + \mu)$ in the lower bound for $\Ex[D^*_n]$, so 
we do not improve on the latter bound by observing that $D^*_n \ge D^+_n$.
However, one might be able to combine this ``greedy" procedure with the ``longest terminal edge" construction above.

\subsection{The height of $\DTCS(n)$}
\label{sec:htDT}

%{\color{blue} Slightly edited from previous version.}

We now turn to the discrete context.
 We remark that  Section \ref{sec:drawn} contains a specific context where the discrete tree height is relevant. 
 
 Write $L_n^*$ and $L_n$ for the tree height and the height of a random leaf in  $\DTCS(n)$. 
 Recall that $\Ex[L_n] \sim  3 \pi^{-2} \log^2 n$ (Theorem \ref{Lnorm}). 
 Although the CLTs for $D_n$ and $L_n$ seem analogous,
 there is a qualitative difference regarding tree-height, because
  (as noted in Section \ref{sec:leaf-ht}) in  $\CTCS(n)$ the tree height
 is affected by the extremes of the terminal edge lengths, which cannot happen for  $\DTCS(n)$.

Note that the tail bound for $L_n$ in Theorem \ref{T:10} does not help directly  for studying $L^*_n$ because the bound is $O(n^{-\delta})$ rather than $o(n^{-1})$.

\begin{Theorem}[\cite{beta1} Theorem 1.5]
\label{thmE} 
Let $\beta=\min_{\alpha>1/\log 2}\bigl[\alpha+\tfrac{4\alpha^2\zeta(3)}{\alpha\log 2-1}\bigr]\approx 42.9$. For $\eps\in (0,1)$, 
\[
\Pr\Bigl( L^*_n\ge (1+\eps)\beta\log^2 n\Bigr)\le \exp\bigl(-\Theta(\eps\log n)\bigr).
\]
\end{Theorem}
%\noindent
%We conjecture that both $\tfrac{\mathcal D_n}{\log n}$ and $\tfrac{\mathcal L_n}{\log^2 n}$ converge, in probability, to constants.

\noindent
 Analogous to Open Problem \ref{OPmax} we conjecture
 \begin{OP}
\label{OPmax2}
Show that $L^*_n \sim c \log^2 n$ in probability, and identify the constant $c$.
\end{OP}
%In \cite{beta1} the (perhaps surprising) result is proved that both
%$\Ex[L_n]$ and $\Ex[L^*_n]$ are asymptotic to $3 \pi^{-2} \log^2 n$.
%So $ \Ex[L^*_n - L_n] = o(\log^2 n)$ which raises
%\begin{OP}
%\label{OP:D*}
%What is the order of magnitude of $\Ex[L^*_n - L_n]$?
%\end{OP}
As in Section \ref{sec:LBgreedy},
one could also\footnote{This idea is mentioned in \cite{me_clad} but there is a foolish calculus error leading to an incorrect conclusion.} 
consider the length $L^+_n$ of the path from the root that is chosen via the natural greedy algorithm,
taking the larger sub-clade at each split. 
%So $L_n \le L^+_n \le L^*_n$: what are the orders of magnitude of the two differences?
Numerics suggest that $\Ex[L^+_n - L_n]$ grows slightly faster than $(\log n)\cdot (\log \log n)$.
This  suggests that the limit constant $c$ may in fact be the lower bound $3 \pi^{-2}$ arising from $\Ex[D_n]$ itself.

 \section{The occupation measure and the fringe tree}
 \label{sec:OPfringe}

 \subsection{The occupation measure}
 \label{sec:OP}
 %{\color{blue}Copied exactly from paper 3, which in turn was a small edit of previous paper 2.}
 
 Here is the second way in which the HD chain is relevant to this article.
 The chain describes the number of descendant leaves of a node, as one moves at speed $1$
along the path from the root to a uniform random leaf.
We study the ``occupation measure", that is
 \begin{equation}
 \mbox{
 $a(n,i) := $ probability that the chain started at state $n$ is ever in state $i$.
 }
 \label{def:ani}
 \end{equation}
 So $a(n,n) = a(n,1) = 1$.
To see  the relevance of $a(n,i)$ to the tree model,
we 
let $N_n(j)$ be the number of subtrees of $\CTCS(n)$ that have $j$ leaves;
thus, 
for $j\ge2$,  $N_n(j)$ is the number of internal nodes of $\CTCS(n)$ that
have exactly $j$ leaves as descendants. 
Then, conditioned on $\CTCS(n)$, the number of leaves that are in some subtree
with $i$ leaves is $iN_n(i)$, and thus the (conditional) probability that a
random leaf is in such a subtree is $iN_n(i)/n$. Taking the expectation we
find
\begin{align}\label{jup}
  a(n,i) = \frac{i\Ex[N_n(i)]}{n}
\end{align}
and, conversely,
 \begin{equation}\label{jun12}
\Ex[N_n(i)]
% \Ex[ \mbox{number of $i$-leaf subtrees of $\CTCS(n)$ }] 
= n a(n,i)/i .
 \end{equation}
 It seems very intuitive (but not obvious at a rigorous level) that the limits $a(i) = \lim_{n \to \infty} a(n,i)$ exist.
Note that   $\sum_{i=2}^n a(n,i)/h_{i-1} $ is just the mean absorption time $\Ex[ D_n]$, 
 so (from Theorem \ref{TNorm}) we anticipate that,
 assuming the limits exist, 
 \begin{align}
 \sum_{i=2}^n \sfrac{a(i)}{\log i }\sim \Ex [D_n] \sim (6/\pi^2) \log n \mbox{ as } n \to \infty.
\end{align}
 This in turn suggests
 \footnote{And this argument explains why the constant $6/\pi^2$ must be the same in Theorems \ref{TNorm} and \ref{T:alimit}.  Similarly one sees heuristically that $\Ex[L_n] = \sum_{i=2}^n a(n,i) \sim  \sum_{i=2}^n a(i) \sim 3\pi^{-2} \log^2 n$, as stated in Theorem \ref{Lnorm}.}
 \begin{align}
a(i) \sim \sfrac{6}{\pi^2} \sfrac{\log i}{i}  \mbox{ as } i \to \infty.
\end{align}
 However, there seems no intuitive reason to think there should be some simple formula for the limits $a(i)$.
 So the following result was surprising to us.
  \begin{Theorem} [Occupation measure]
  \label{T:alimit}
 For each $i = 2,3,\ldots$,
 \begin{align}
a(i) : = \lim_{n \to \infty} a(n,i) &= \frac{6 h_{i-1}}{\pi^2 (i-1)} .\label{talimit}
 \end{align}
 And $a(1) = 1$.
 \end{Theorem}
 This is the starting point for our analysis of the  {\em fringe distribution} in Section \ref{sec:OPfringe}.
 We currently know 3 quite different proofs\footnote{And the existence of the limits (without the explicit formula) can be proved by analysis of recursions: implicit in \cite[Theorem 2.16]{beta1}.} 
 of Theorem \ref{T:alimit}.
 % described below in chronological order of discovery.

\smallskip \noindent
{\bf 1.} One method \cite{HDchain} (straightforward in outline, though somewhat tedious in detail)\footnote{A simplification of that proof has been found by  Luca Pratelli and Pietro Rigo (personal communication).}   
is to first prove by coupling that the limits $a(i)$ exist. 
The limits must satisfy a certain infinite set of equations; the one solution $\frac{6 h_{i-1}}{\pi^2 (i-1)}$ was found by inspired guesswork.
  Then check that the solution is unique.

\smallskip \noindent
  {\bf 2.} Iksanov  \cite{iksanovHD} repeats his method for proving the CLT \cite{iksanovCLT} by exploiting the exact relationship with 
  regenerative composition structures, enabling  a shorter derivation of Theorem \ref{T:alimit} from known results in that theory.
  This methodology is clearly worth further consideration.

\smallskip \noindent
{\bf 3.} In Section \ref{sec:surprise} we outline a third proof \cite{beta3-arxiv,beta4-arxiv}, illustrating how to exploit the exchangeable representation of $\CTCS(\infty)$.

\subsection{The (limit) fringe tree}
\label{sec:fringe}

%{\color{blue} Copied from previous version of paper 2; a little extra is in paper 3.}

To be consistent with the {\em cladogram} representation described below, we work here in the discrete time $\DTCS(n)$ setting:
the definition \eqref{def:ani} of $a(n,i)$ is of course unchanged in discrete time.

  The motivation for Theorem \ref{T:alimit} involves the (asymptotic) {\em fringe tree}
  for the random tree model $\DTCS(n)$, that is 
   the $n \to \infty$ local weak limit of the tree relative to a typical leaf.
(We talk rather casually about {\em fringe tree} or {\em fringe process} 
or {\em fringe distribution} -- see Section \ref{sec:fringeterm} for a more careful account of terminology and local weak convergence.)
  It is straightforward to see that  the fringe tree can be described in terms of the limits $(a(i), i \ge 1)$ as follows.
  
  \medskip \noindent
  ({\bf a}) 
 The sequence of clade sizes as one moves away from the distinguished leaf is the discrete time ``reverse HD" Markov chain started at state $1$,  whose ``upward" 
transition probabilities $  q^\uparrow(i,j) $ are derived 
%(\cite{beta2} equation (53))
by considering for $j >i$
\[\lim_n n^{-1} \Ex [\mbox{number of splits $j \to (i,j-i)$ or $(j-i,i)$ in $\DTCS(n)$} ].\]
Calculating this in both directions leads to the identity
\[ i^{-1} a(i)   q^\uparrow(i,j) = j^{-1} a(j) (q(j,i) + q(j,j-i)) \]
which, from the explicit formula \eqref{talimit} for $a(i)$, becomes
\begin{eqnarray}
q^\uparrow (1,j) &=& 6 \pi^{-2}     \sfrac{1}{(j-1)(j-1)}, \ j \ge 2 \nonumber \\
q^\uparrow (i,j) &=& \sfrac{i-1}{(j-1)(j-i)h_{i-1}}, \ 2 \le i < j . \label{q_up}
\end{eqnarray}
({\bf b})
 At each such step $i \to j$, there is the  sibling clade of size $j-i$, and
 this clade is distributed as $\DTCS(j-i)$,
 independently for each step.

\smallskip \noindent
One can check that (\ref{q_up})  is a probability distribution by observing
% as in the proof of Lemma \ref{L:guess},
\[
\sum_{j>i} \sfrac{1}{(j-1)(j-i)} 
= \sum_{j>i}\sfrac{1}{i-1} (\sfrac{1}{j-i} - \sfrac{1}{j-1})
= \sfrac{ h_{i-1}}{i-1} .
\]

\subsection{Motivation as a phylogenetic tree model}
\label{sec:motive}

%{\color{blue} Copied from previous version of paper 2; this is much longer
%  than in the other papers.

Some motivation for the random tree model came from noticing the shape of phylogenetic trees in evolutionary biology.
{\em Phylogenetic tree} is the general phrase for any tree-like graphical representation;
{\em cladogram} is more specifically a leaf-labeled binary tree, illustrated\footnote{In particular, a cladogram has no quantitative time-scale on the vertical axis.} by 
a real example in Figure \ref{Fig:cladogram} (bottom). 
Nowadays such trees are typically derived from DNA analysis of extant species\footnote{Published trees are essentially ``best fit" to noisy data, with  occasional more-than-binary
splits which cannot be resolved to successive binaries.}.
There is no biological significance to the positioning of left/right branches, though in our models it
is convenient to make the distinction.
Our random tree model $\DTCS(n)$ 
%with a uniform random permutation of leaf-labels as in Section \ref{sec:consistent-exch},
is one of many probability models that have been considered for cladograms.
The model was proposed in \cite{me_clad} in 1996 with some brief informal study then,
and with little further study until the current project. % \cite{beta2,beta1}.
The motivation for this particular model came from an observation, in the small-scale study \cite{me_yule},
that in splits $m \to (i,m-i)$ in real-world phylogenetic trees, the median size of the smaller subtree scaled roughly as $m^{1/2}$. 
That data is not consistent with more classical random tree models, where the median size would be 
$O(\log m)$  or $\Theta(m)$, 
but this $m^{1/2}$ median property does hold for our particular model. 
Figure  \ref{Fig:cladogram} compares a simulation of $\DTCS(77)$ with a real cladogram on 77 species; 
these appear visually quite similar.

  Cladograms are drawn in a particular way, with the species labels on leaves in a (usually horizontal) line.
This differs from the typical visualization of (mathematical) random trees, such as 
 Galton-Watson trees, where one  starts from a root and then draws successive generations.
Figure \ref{Fig:2} illustrates how to re-draw such a tree as a cladogram, in a representation where the heights of branchpoints
are positioned at  integer heights $1,2,3, \ldots$.
Doing this in a natural way (as in Figures \ref{Fig:2} and \ref{Fig:cladogram}), 
the height of the cladogram is equal to the height (maximum leaf height) of the tree in the usual
successive-generations picture -- see Section \ref{sec:drawn}.
So in particular, the height of the cladogram representation of $\DTCS(n)$ 
is the tree-height studied in \cite{beta1}, known to be of order $\log^2 n$: this is our $L_n^*$ in Section \ref{sec:htDT}.

\begin{figure}[ht]
\setlength{\unitlength}{0.038in}
\begin{picture}(60,46)(-30,-5)
\multiput(0,0)(3,0){20}{\circle*{0.9}}
\put(30,0){\circle{1.8}}
\put(30,0){\line(0,1){8}}
\put(24,0){\line(0,1){4}}
\put(27,0){\line(0,1){4}}
\put(24,4){\line(1,0){3}}
\put(25.5,4){\line(0,1){4}}
\put(25.5,8){\line(1,0){4.5}}
\put(27,8){\line(0,1){4}}

\put(33,0){\line(0,1){4}}
\put(36,0){\line(0,1){4}}
\put(33,4){\line(1,0){3}}
\put(34.5,4){\line(0,1){8}}

\put(27,12){\line(1,0){7.5}}

\put(39,0){\line(0,1){12}}
\put(42,0){\line(0,1){8}}
\put(45,0){\line(0,1){4}}
\put(48,0){\line(0,1){4}}
\put(45,4){\line(1,0){3}}
\put(46.5,4){\line(0,1){4}}
\put(42,8){\line(1,0){4.5}}
\put(45,8){\line(0,1){4}}
\put(39,12){\line(1,0){6}}
\put(30,12){\line(0,1){4}}
\put(43.5,12){\line(0,1){4}}
\put(30,16){\line(1,0){13.5}}

\put(51,0){\line(0,1){4}}
\put(54,0){\line(0,1){4}}
\put(51,4){\line(1,0){3}}
\put(52.5,4){\line(0,1){16}}

\put(36,16){\line(0,1){4}}
\put(36,20){\line(1,0){16.5}}

\put(57,0){\line(0,1){24}}
\put(39,20){\line(0,1){4}}
\put(39,24){\line(1,0){18}}
\put(40.5,24){\line(0,1){4}}

\put(18,0){\line(0,1){8}}
\put(12,0){\line(0,1){4}}
\put(15,0){\line(0,1){4}}
\put(12,4){\line(1,0){3}}
\put(13.5,4){\line(0,1){4}}
\put(13.5,8){\line(1,0){4.5}}
\put(15,8){\line(0,1){4}}
\put(21,0){\line(0,1){12}}
\put(15,12){\line(1,0){6}}
\put(16.5,12){\line(0,1){16}}
\put(16.5,28){\line(1,0){23.5}}

\put(9,0){\line(0,1){32}}
\put(34.5,28){\line(0,1){4}}
\put(9,32){\line(1,0){25.5}}
\put(33,32){\line(0,1){4}}

\put(6,0){\line(0,1){8}}
\put(0,0){\line(0,1){4}}
\put(3,0){\line(0,1){4}}
\put(0,4){\line(1,0){3}}
\put(1.5,4){\line(0,1){4}}
\put(1.5,8){\line(1,0){4.5}}
\put(3,8){\line(0,1){28}}
\put(3,36){\line(1,0){30}}
\put(28.5,36){\line(0,1){4}}

  \end{picture}
\caption{Cladogram representation of the Figure \ref{Fig:1d}  realization of $\DTCS(20)$.}
\label{Fig:2}
\end{figure}
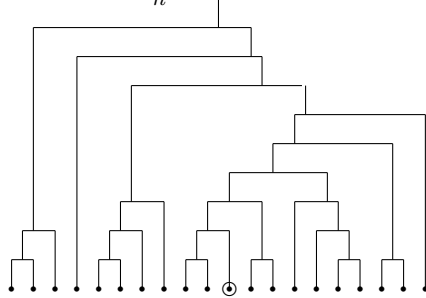

\begin{figure}
\setlength{\unitlength}{0.087in}
\begin{picture}(80,40)(8,-30)
\multiput(1,0)(1,0){77}{\circle*{0.3}}
\put(77,0){\line(0,1){1}}
\put(76,0){\line(0,1){1}}
\put(76,1){\line(1,0){1}}
\put(75,0){\line(0,1){2}}
\put(75,2){\line(1,0){1.5}}
\put(76.5,1){\line(0,1){1}}

\put(74,0){\line(0,1){1}}
\put(73,0){\line(0,1){1}}
\put(73,1){\line(1,0){1}}
\put(72,0){\line(0,1){2}}
\put(72,2){\line(1,0){1.5}}
\put(73.5,1){\line(0,1){1}}

\put(75.5,2){\line(0,1){1}}
\put(72.5,2){\line(0,1){1}}
\put(72.5,3){\line(1,0){3}}
\put(73.5,3){\line(0,1){1}}
\put(71,0){\line(0,1){4}}
\put(71,4){\line(1,0){2.5}}

\put(70,0){\line(0,1){1}}
\put(69,0){\line(0,1){1}}
\put(69,1){\line(1,0){1}}

\put(68,0){\line(0,1){1}}
\put(67,0){\line(0,1){1}}
\put(67,1){\line(1,0){1}}

\put(69.5,1){\line(0,1){1}}
\put(67.5,1){\line(0,1){1}}
\put(67.5,2){\line(1,0){2}}

\put(66,0){\line(0,1){3}}
\put(68.5,2){\line(0,1){1}}
\put(66,3){\line(1,0){2.5}}
\put(65,0){\line(0,1){4}}
\put(68,3){\line(0,1){1}}
\put(65,4){\line(1,0){3}}

\put(66.5,4){\line(0,1){1}}
\put(71.75,4){\line(0,1){1}}
\put(66.5,5){\line(1,0){5.25}}
\put(68.5,5){\line(0,1){1}}

\put(64,0){\line(0,1){6}}
\put(64.0,6){\line(1,0){4.5}}
\put(65.5,6){\line(0,1){1}}
\put(63,0){\line(0,1){7}}
\put(63,7){\line(1,0){2.5}}
\put(64.25,7){\line(0,1){1}}

\put(62,0){\line(0,1){1}}
\put(61,0){\line(0,1){1}}
\put(61,1){\line(1,0){1}}

\put(61.5,1){\line(0,1){1}}
\put(60,0){\line(0,1){2}}
\put(60,2){\line(1,0){1.5}}
\put(60.75,2){\line(0,1){1}}
\put(59,0){\line(0,1){3}}
\put(59,3){\line(1,0){1.75}}
\put(59.75,3){\line(0,1){1}}

\put(58,0){\line(0,1){1}}
\put(57,0){\line(0,1){1}}
\put(57,1){\line(1,0){1}}

\put(57.5,1){\line(0,1){3}}
\put(57.5,4){\line(1,0){2.25}}
\put(58.25,4){\line(0,1){1}}

\put(54,0){\line(0,1){2}}
\put(55.5,1){\line(0,1){1}}
\put(54,2){\line(1,0){1.5}}
\put(53,0){\line(0,1){3}}
\put(54.5,2){\line(0,1){1}}
\put(53,3){\line(1,0){1.5}}
\put(53.75,3){\line(0,1){2}}
\put(53.75,5){\line(1,0){4.5}}
\put(55.75,5){\line(0,1){3}}
\put(55.75,8){\line(1,0){8.5}}

\put(56,0){\line(0,1){1}}
\put(55,0){\line(0,1){1}}
\put(55,1){\line(1,0){1}}

\put(52,0){\line(0,1){1}}
\put(51,0){\line(0,1){1}}
\put(51,1){\line(1,0){1}}
\put(51.5,1){\line(0,1){8}}
\put(59.5,8){\line(0,1){1}}
\put(51.5,9){\line(1,0){8}}

\put(50,0){\line(0,1){1}}
\put(49,0){\line(0,1){1}}
\put(49,1){\line(1,0){1}}

\put(48,0){\line(0,1){2}}
\put(49.5,1){\line(0,1){1}}
\put(48,2){\line(1,0){1.5}}

\put(47,0){\line(0,1){1}}
\put(46,0){\line(0,1){1}}
\put(46,1){\line(1,0){1}}
\put(46.5,1){\line(0,1){2}}
\put(48.5,2){\line(0,1){1}}
\put(46.5,3){\line(1,0){2}}

\put(45,0){\line(0,1){4}}
\put(47.25,3){\line(0,1){1}}
\put(45,4){\line(1,0){2.25}}

\put(44,0){\line(0,1){1}}
\put(43,0){\line(0,1){1}}
\put(43,1){\line(1,0){1}}

\put(43.5,1){\line(0,1){4}}
\put(46,4){\line(0,1){1}}

\put(43.5,5){\line(1,0){2.5}}
\put(44.5,5){\line(0,1){5}}
\put(54.5,9){\line(0,1){1}}
\put(44.5,10){\line(1,0){10}}

\put(42,0){\line(0,1){1}}
\put(41,0){\line(0,1){1}}
\put(41,1){\line(1,0){1}}

\put(41.5,1){\line(0,1){1}}
\put(40,0){\line(0,1){2}}
\put(40,2){\line(1,0){1.5}}
\put(40.75,2){\line(0,1){1}}
\put(39,0){\line(0,1){3}}
\put(39,3){\line(1,0){1.75}}
\put(39.75,3){\line(0,1){1}}
\put(38,0){\line(0,1){4}}

\put(38,4){\line(1,0){1.75}}
\put(38.75,4){\line(0,1){7}}
\put(48.5,10){\line(0,1){1}}
\put(38.75,11){\line(1,0){9.75}}

\put(37,0){\line(0,1){12}}
\put(43,11){\line(0,1){1}}
\put(37,12){\line(1,0){6}}

\put(36,0){\line(0,1){1}}
\put(35,0){\line(0,1){1}}
\put(35,1){\line(1,0){1}}
\put(35.5,1){\line(0,1){1}}
\put(34,0){\line(0,1){2}}
\put(34,2){\line(1,0){1.5}}
\put(33,0){\line(0,1){1}}
\put(32,0){\line(0,1){1}}
\put(32,1){\line(1,0){1}}
\put(32.5,1){\line(0,1){2}}
\put(34.5,2){\line(0,1){1}}
\put(32.5,3){\line(1,0){2}}

\put(31,0){\line(0,1){1}}
\put(30,0){\line(0,1){1}}
\put(30,1){\line(1,0){1}}
\put(30.5,1){\line(0,1){3}}
\put(33.5,3){\line(0,1){1}}
\put(30.5,4){\line(1,0){3}}

\put(29,0){\line(0,1){5}}
\put(32,4){\line(0,1){1}}
\put(29,5){\line(1,0){3}}

\put(28,0){\line(0,1){6}}
\put(30.5,5){\line(0,1){1}}
\put(28,6){\line(1,0){2.5}}

\put(28.75,6){\line(0,1){7}}
\put(39.5,12){\line(0,1){1}}
\put(28.75,13){\line(1,0){10.75}}

\put(27,0){\line(0,1){1}}
\put(26,0){\line(0,1){1}}
\put(26,1){\line(1,0){1}}
\put(26.5,1){\line(0,1){13}}
\put(33,13){\line(0,1){1}}
\put(26.5,14){\line(1,0){6.5}}

\put(25,0){\line(0,1){1}}
\put(24,0){\line(0,1){1}}
\put(24,1){\line(1,0){1}}
\put(24.5,1){\line(0,1){1}}
\put(23,0){\line(0,1){2}}
\put(23,2){\line(1,0){1.5}}

\put(23.5,2){\line(0,1){13}}
\put(29,14){\line(0,1){1}}
\put(23.5,15){\line(1,0){5.5}}

\put(22,0){\line(0,1){1}}
\put(21,0){\line(0,1){1}}
\put(21,1){\line(1,0){1}}

\put(21.5,1){\line(0,1){1}}
\put(20,0){\line(0,1){2}}
\put(20,2){\line(1,0){1.5}}
\put(20.75,2){\line(0,1){1}}
\put(19,0){\line(0,1){3}}
\put(19,3){\line(1,0){1.75}}
\put(19.75,3){\line(0,1){1}}

\put(18,0){\line(0,1){1}}
\put(17,0){\line(0,1){1}}
\put(17,1){\line(1,0){1}}

\put(17.5,1){\line(0,1){1}}
\put(16,0){\line(0,1){2}}
\put(16,2){\line(1,0){1.5}}
\put(16.75,2){\line(0,1){1}}
\put(15,0){\line(0,1){3}}
\put(15,3){\line(1,0){1.75}}
\put(15.75,3){\line(0,1){1}}

\put(15.75,4){\line(1,0){4}}
\put(17.75,4){\line(0,1){1}}

\put(14,0){\line(0,1){5}}
\put(14,5){\line(1,0){3.75}}
\put(15.5,5){\line(0,1){1}}

\put(13,0){\line(0,1){6}}
\put(13,6){\line(1,0){2.5}}
\put(14,6){\line(0,1){10}}
\put(26,15){\line(0,1){1}}
\put(14,16){\line(1,0){12}}

\put(12,0){\line(0,1){17}}
\put(19,16){\line(0,1){1}}
\put(12,17){\line(1,0){7}}

\put(11,0){\line(0,1){1}}
\put(10,0){\line(0,1){1}}
\put(10,1){\line(1,0){1}}

\put(9,0){\line(0,1){1}}
\put(8,0){\line(0,1){1}}
\put(8,1){\line(1,0){1}}

\put(10.5,1){\line(0,1){1}}
\put(8.5,1){\line(0,1){1}}
\put(8.5,2){\line(1,0){2}}

\put(9.5,2){\line(0,1){16}}
\put(15,17){\line(0,1){1}}
\put(9.5,18){\line(1,0){5.5}}

\put(7,0){\line(0,1){1}}
\put(6,0){\line(0,1){1}}
\put(6,1){\line(1,0){1}}

\put(6.5,1){\line(0,1){1}}
\put(5,0){\line(0,1){2}}
\put(5,2){\line(1,0){1.5}}
\put(5.75,2){\line(0,1){1}}
\put(4,0){\line(0,1){3}}
\put(4,3){\line(1,0){1.75}}
\put(4.75,3){\line(0,1){1}}
\put(3,0){\line(0,1){4}}

\put(3,4){\line(1,0){1.75}}
\put(3.75,4){\line(0,1){15}}
\put(12,18){\line(0,1){1}}
\put(3.75,19){\line(1,0){8.25}}

\put(2,0){\line(0,1){20}}
\put(7.5,19){\line(0,1){1}}
\put(2,20){\line(1,0){5.5}}

\put(1,0){\line(0,1){21}}
\put(4.5,20){\line(0,1){1}}
\put(1,21){\line(1,0){3.5}}
\put(2.5,21){\line(0,1){1}}
\end{picture}

\vspace*{-1.7in}

\includegraphics[width=5.1in]{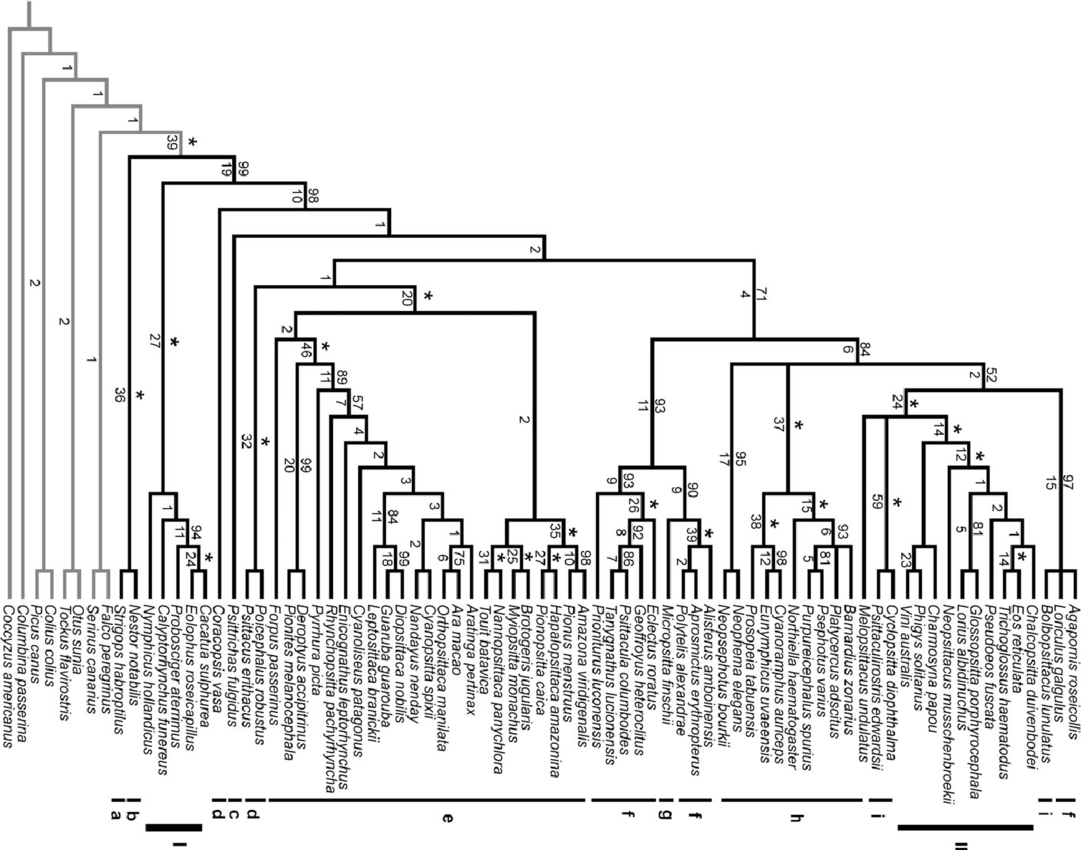}
 
 \caption{Bottom: cladogram showing phylogenetics of 77 parrot species, from \cite{parrot:phy}.  
 Top: simulation of $\DTCS(77)$, drawn as fringe distribution in the style of biological cladograms.
 }
\label{Fig:cladogram}
 \end{figure}

\begin{figure}
\setlength{\unitlength}{0.087in}
\begin{picture}(80,40)(8,0)
\multiput(1,0)(1,0){77}{\circle*{0.3}}

\put(73,0){\line(0,1){1}}
\put(74,0){\line(0,1){1}}
\put(73,1){\line(1,0){1}}
\put(73.5,1){\line(0,1){1}}
\put(72,0){\line(0,1){2}}
\put(72,2){\line(1,0){1.5}}

\put(76,0){\line(0,1){1}}
\put(77,0){\line(0,1){1}}
\put(76,1){\line(1,0){1}}
\put(76.5,1){\line(0,1){1}}
\put(75,0){\line(0,1){2}}
\put(75,2){\line(1,0){1.5}}

\put(73,2){\line(0,1){1}}
\put(76,2){\line(0,1){1}}
\put(73,3){\line(1,0){3}}

\put(70,0){\line(0,1){1}}
\put(71,0){\line(0,1){1}}
\put(70,1){\line(1,0){1}}
\put(70.5,1){\line(0,1){1}}
\put(69,0){\line(0,1){2}}
\put(69,2){\line(1,0){1.5}}

\put(68,0){\line(0,1){3}}
\put(70,2){\line(0,1){1}}
\put(68,3){\line(1,0){2}}

\put(69.5,3){\line(0,1){1}}
\put(74.5,3){\line(0,1){1}}
\put(69.5,4){\line(1,0){5}}

\put(67,0){\line(0,1){5}}
\put(72.5,4){\line(0,1){1}}
\put(67,5){\line(1,0){5.5}}

\put(65,0){\line(0,1){1}}
\put(66,0){\line(0,1){1}}
\put(65,1){\line(1,0){1}}
\put(65.5,1){\line(0,1){1}}
\put(64,0){\line(0,1){2}}
\put(64,2){\line(1,0){1.5}}
\put(65,2){\line(0,1){4}}
\put(72,5){\line(0,1){1}}
\put(65,6){\line(1,0){7}}

\put(63,0){\line(0,1){7}}
\put(70.5,6){\line(0,1){1}}
\put(63,7){\line(1,0){7.5}}

\put(61,0){\line(0,1){1}}
\put(62,0){\line(0,1){1}}
\put(61,1){\line(1,0){1}}
\put(61.5,1){\line(0,1){1}}
\put(60,0){\line(0,1){2}}
\put(60,2){\line(1,0){1.5}}

\put(59,0){\line(0,1){3}}
\put(61,2){\line(0,1){1}}
\put(59,3){\line(1,0){2}}
\put(60.5,3){\line(0,1){5}}
\put(70,7){\line(0,1){1}}
\put(60.5,3){\line(0,1){5}}
\put(60.5,8){\line(1,0){9.5}}

\put(57,0){\line(0,1){1}}
\put(58,0){\line(0,1){1}}
\put(57,1){\line(1,0){1}}
\put(57.5,1){\line(0,1){8}}
\put(68,8){\line(0,1){1}}
\put(57.5,9){\line(1,0){10.5}}

\put(56,0){\line(0,1){10}}
\put(67,9){\line(0,1){1}}
\put(56,10){\line(1,0){11}}

\put(55,0){\line(0,1){11}}
\put(66.5,10){\line(0,1){1}}
\put(55,11){\line(1,0){11.5}}

\put(51,0){\line(0,1){1}}
\put(52,0){\line(0,1){1}}
\put(51,1){\line(1,0){1}}
\put(53,0){\line(0,1){1}}
\put(54,0){\line(0,1){1}}
\put(53,1){\line(1,0){1}}
\put(51.5,1){\line(0,1){1}}
\put(53.5,1){\line(0,1){1}}
\put(51.5,2){\line(1,0){2}}

\put(50,0){\line(0,1){3}}
\put(52.5,2){\line(0,1){1}}
\put(50,3){\line(1,0){2.5}}

\put(48,0){\line(0,1){1}}
\put(49,0){\line(0,1){1}}
\put(48,1){\line(1,0){1}}
\put(48.5,1){\line(0,1){3}}
\put(52,3){\line(0,1){1}}
\put(48.5,4){\line(1,0){3.5}}

\put(47,0){\line(0,1){5}}
\put(51,4){\line(0,1){1}}
\put(47,5){\line(1,0){4}}

\put(50.5,5){\line(0,1){7}}
\put(66,11){\line(0,1){1}} 
\put(50.5,12){\line(1,0){15.5}} 

\put(46,0){\line(0,1){13}}
\put(62,12){\line(0,1){1}} 
\put(46,13){\line(1,0){16}}

\put(44,0){\line(0,1){1}}
\put(45,0){\line(0,1){1}}
\put(44,1){\line(1,0){1}}
\put(44.5,1){\line(0,1){1}}
\put(43,0){\line(0,1){2}}
\put(43,2){\line(1,0){1.5}}

\put(44,2){\line(0,1){12}}
\put(61.5,13){\line(0,1){1}} 
\put(44,14){\line(1,0){17.5}}

\put(42,0){\line(0,1){15}}
\put(60,14){\line(0,1){1}} 
\put(42,15){\line(1,0){18}}

\put(40,0){\line(0,1){1}}
\put(41,0){\line(0,1){1}}
\put(40,1){\line(1,0){1}}
\put(40.5,1){\line(0,1){1}}
\put(39,0){\line(0,1){2}}
\put(39,2){\line(1,0){1.5}}

\put(40,2){\line(0,1){14}}
\put(59.5,15){\line(0,1){1}} 
\put(40,16){\line(1,0){19.5}}

\put(37,0){\line(0,1){1}}
\put(38,0){\line(0,1){1}}
\put(37,1){\line(1,0){1}}
\put(37.5,1){\line(0,1){1}}
\put(36,0){\line(0,1){2}}
\put(36,2){\line(1,0){1.5}}

\put(35,0){\line(0,1){3}}
\put(37,2){\line(0,1){1}}
\put(35,3){\line(1,0){2}}

\put(36.5,3){\line(0,1){14}}
\put(58,16){\line(0,1){1}} 
\put(36.5,17){\line(1,0){21.5}}

\put(34,0){\line(0,1){18}}
\put(56,17){\line(0,1){1}} 
\put(34,18){\line(1,0){22}}

\put(32,0){\line(0,1){1}}
\put(33,0){\line(0,1){1}}
\put(32,1){\line(1,0){1}}
\put(32.5,1){\line(0,1){1}}
\put(31,0){\line(0,1){2}}
\put(31,2){\line(1,0){1.5}}

\put(30,0){\line(0,1){3}}
\put(32,2){\line(0,1){1}}
\put(30,3){\line(1,0){2}}

\put(29,0){\line(0,1){4}}
\put(31.5,3){\line(0,1){1}}
\put(29,4){\line(1,0){2.5}}

\put(27,0){\line(0,1){1}}
\put(28,0){\line(0,1){1}}
\put(27,1){\line(1,0){1}}

\put(27.5,1){\line(0,1){4}}
\put(31,4){\line(0,1){1}}
\put(27.5,5){\line(1,0){3.5}}
\put(30,5){\line(0,1){1}}

\put(26,0){\line(0,1){6}}
\put(26,6){\line(1,0){4}}

\put(22,0){\line(0,1){1}}
\put(23,0){\line(0,1){1}}
\put(22,1){\line(1,0){1}}
\put(24,0){\line(0,1){1}}
\put(25,0){\line(0,1){1}}
\put(24,1){\line(1,0){1}}
\put(22.5,1){\line(0,1){1}}
\put(24.5,1){\line(0,1){1}}
\put(22.5,2){\line(1,0){2}}

\put(29.5,6){\line(0,1){1}}
\put(23.5,2){\line(0,1){5}}
\put(23.5,7){\line(1,0){6}}

\put(20,0){\line(0,1){1}}
\put(21,0){\line(0,1){1}}
\put(20,1){\line(1,0){1}}
\put(20.5,1){\line(0,1){7}}
\put(27.5,7){\line(0,1){1}}
\put(20.5,8){\line(1,0){7}}

\put(19,0){\line(0,1){9}}
\put(26.5,8){\line(0,1){1}}
\put(19,9){\line(1,0){7.5}}

\put(17,0){\line(0,1){1}}
\put(18,0){\line(0,1){1}}
\put(17,1){\line(1,0){1}}
\put(17.5,1){\line(0,1){9}}
\put(25.5,9){\line(0,1){1}}
\put(17.5,10){\line(1,0){8}}

\put(24.5,10){\line(0,1){9}}
\put(55.5,18){\line(0,1){1}} 
\put(24.5,19){\line(1,0){31}}

\put(47,19){\line(0,1){1}} 

\put(16,0){\line(0,1){20}}
\put(16,20){\line(1,0){31}}

\put(14,0){\line(0,1){1}}
\put(15,0){\line(0,1){1}}
\put(14,1){\line(1,0){1}}
\put(14.5,1){\line(0,1){1}}
\put(13,0){\line(0,1){2}}
\put(13,2){\line(1,0){1.5}}
\put(14,2){\line(0,1){19}}
\put(46.5,20){\line(0,1){1}} 
\put(14,21){\line(1,0){32.5}}

\put(12,0){\line(0,1){22}}
\put(45,21){\line(0,1){1}} 
\put(12,22){\line(1,0){33}}

\put(10,0){\line(0,1){1}}
\put(11,0){\line(0,1){1}}
\put(10,1){\line(1,0){1}}
\put(10.5,1){\line(0,1){1}}
\put(9,0){\line(0,1){2}}
\put(9,2){\line(1,0){1.5}}
\put(7,0){\line(0,1){1}}
\put(8,0){\line(0,1){1}}
\put(7,1){\line(1,0){1}}
\put(10,2){\line(0,1){1}}
\put(7.5,1){\line(0,1){2}}
\put(7.5,3){\line(1,0){2.5}}
\put(9,3){\line(0,1){20}}
\put(44.5,22){\line(0,1){1}} 
\put(9,23){\line(1,0){35.5}}

\put(5,0){\line(0,1){1}}
\put(6,0){\line(0,1){1}}
\put(5,1){\line(1,0){1}}
\put(5.5,1){\line(0,1){23}}
\put(42,23){\line(0,1){1}} 
\put(5.5,24){\line(1,0){36.5}}

\put(4,0){\line(0,1){25}}
\put(41,24){\line(0,1){1}} 
\put(4,25){\line(1,0){37}}

\put(2,0){\line(0,1){1}}
\put(3,0){\line(0,1){1}}
\put(2,1){\line(1,0){1}}
\put(2.5,1){\line(0,1){1}}
\put(1,0){\line(0,1){2}}
\put(1,2){\line(1,0){1.5}}
\put(2,2){\line(0,1){24}}
\put(40.5,25){\line(0,1){1}} 
\put(2,26){\line(1,0){38.5}}
\put(39,26){\line(0,1){1}} 
\end{picture}
\label{Fig:cladogram1.5}
 \end{figure}

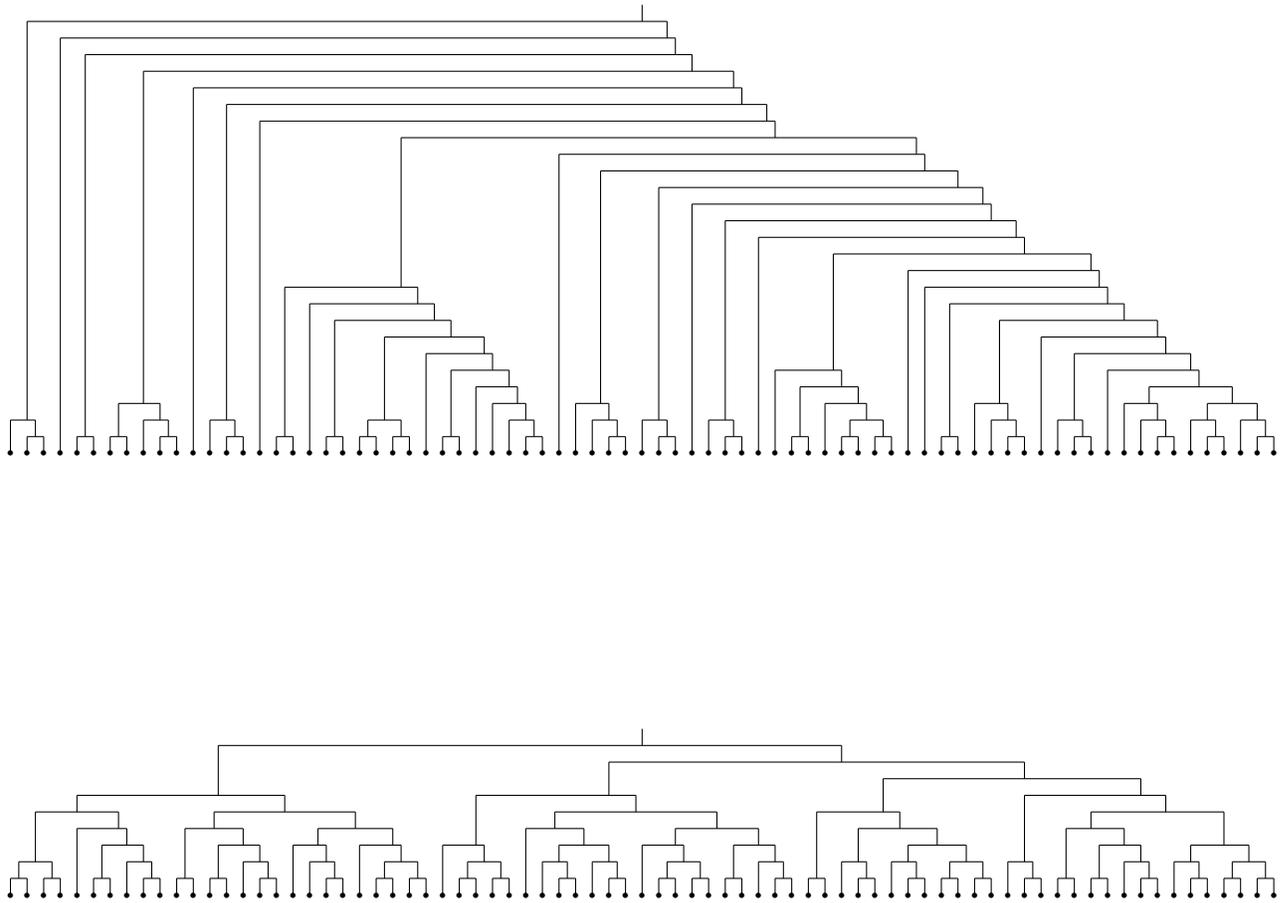
\begin{figure}
\setlength{\unitlength}{0.087in}
\begin{picture}(80,20)(8,-5)
\multiput(1,0)(1,0){77}{\circle*{0.3}}
\put(1,0){\line(0,1){1}}
\put(2,0){\line(0,1){1}}
\put(1,1){\line(1,0){1}}
\put(3,0){\line(0,1){1}}
\put(4,0){\line(0,1){1}}
\put(3,1){\line(1,0){1}}
\put(1.5,1){\line(0,1){1}}
\put(3.5,1){\line(0,1){1}}
\put(1.5,2){\line(1,0){2}}

\put(9,0){\line(0,1){1}}
\put(10,0){\line(0,1){1}}
\put(9,1){\line(1,0){1}}
\put(9.5,1){\line(0,1){1}}
\put(8,0){\line(0,1){2}}
\put(8,2){\line(1,0){1.5}}
\put(6,0){\line(0,1){1}}
\put(7,0){\line(0,1){1}}
\put(6,1){\line(1,0){1}}
\put(6.5,1){\line(0,1){2}}
\put(9,2){\line(0,1){1}}
\put(6.5,3){\line(1,0){2.5}}

\put(5,0){\line(0,1){4}}
\put(8,3){\line(0,1){1}}
\put(5,4){\line(1,0){3}}

\put(2.5,2){\line(0,1){3}}
\put(7.5,4){\line(0,1){1}}
\put(2.5,5){\line(1,0){5}}

\put(13,0){\line(0,1){1}}
\put(14,0){\line(0,1){1}}
\put(13,1){\line(1,0){1}}
\put(16,2){\line(0,1){1}}

\put(13.5,1){\line(0,1){2}}
\put(13.5,3){\line(1,0){2.5}}

\put(11,0){\line(0,1){1}}
\put(12,0){\line(0,1){1}}
\put(11,1){\line(1,0){1}}
\put(11.5,1){\line(0,1){3}}

\put(16,0){\line(0,1){1}}
\put(17,0){\line(0,1){1}}
\put(16,1){\line(1,0){1}}
\put(16.5,1){\line(0,1){1}}
\put(15,0){\line(0,1){2}}
\put(15,2){\line(1,0){1.5}}

\put(15,3){\line(0,1){1}}
\put(11.5,4){\line(1,0){3.5}}

\put(20,0){\line(0,1){1}}
\put(21,0){\line(0,1){1}}
\put(20,1){\line(1,0){1}}
\put(20.5,1){\line(0,1){1}}
\put(19,0){\line(0,1){2}}
\put(19,2){\line(1,0){1.5}}

\put(18,0){\line(0,1){3}}
\put(20,2){\line(0,1){1}}
\put(18,3){\line(1,0){2}}

\put(23,0){\line(0,1){1}}
\put(24,0){\line(0,1){1}}
\put(23,1){\line(1,0){1}}
\put(25,0){\line(0,1){1}}
\put(26,0){\line(0,1){1}}
\put(25,1){\line(1,0){1}}
\put(23.5,1){\line(0,1){1}}
\put(25.5,1){\line(0,1){1}}
\put(23.5,2){\line(1,0){2}}

\put(22,0){\line(0,1){3}}
\put(24.5,2){\line(0,1){1}}
\put(22,3){\line(1,0){2.5}}

\put(19.5,3){\line(0,1){1}}
\put(24,3){\line(0,1){1}}
\put(19.5,4){\line(1,0){4.5}}

\put(13.25,4){\line(0,1){1}}
\put(21.75,4){\line(0,1){1}}
\put(13.25,5){\line(1,0){8.5}}

\put(5,5){\line(0,1){1}}
\put(17.5,5){\line(0,1){1}}
\put(5,6){\line(1,0){12.5}}

%%%%%%%%%%%%%;%%
\put(28,0){\line(0,1){1}}
\put(29,0){\line(0,1){1}}
\put(28,1){\line(1,0){1}}
\put(30,0){\line(0,1){1}}
\put(31,0){\line(0,1){1}}
\put(30,1){\line(1,0){1}}
\put(28.5,1){\line(0,1){1}}
\put(30.5,1){\line(0,1){1}}
\put(28.5,2){\line(1,0){2}}

\put(27,0){\line(0,1){3}}
\put(29.5,2){\line(0,1){1}}
\put(27,3){\line(1,0){2.5}}

\put(34,0){\line(0,1){1}}
\put(35,0){\line(0,1){1}}
\put(34,1){\line(1,0){1}}
\put(34.5,1){\line(0,1){1}}
\put(33,0){\line(0,1){2}}
\put(33,2){\line(1,0){1.5}}

\put(37,0){\line(0,1){1}}
\put(38,0){\line(0,1){1}}
\put(37,1){\line(1,0){1}}
\put(37.5,1){\line(0,1){1}}
\put(36,0){\line(0,1){2}}
\put(36,2){\line(1,0){1.5}}

\put(34,2){\line(0,1){1}}
\put(37,2){\line(0,1){1}}
\put(34,3){\line(1,0){3}}

\put(32,0){\line(0,1){4}}
\put(35.5,3){\line(0,1){1}}
\put(32,4){\line(1,0){3.5}}

\put(40,0){\line(0,1){1}}
\put(41,0){\line(0,1){1}}
\put(40,1){\line(1,0){1}}
\put(42,0){\line(0,1){1}}
\put(43,0){\line(0,1){1}}
\put(42,1){\line(1,0){1}}
\put(40.5,1){\line(0,1){1}}
\put(42.5,1){\line(0,1){1}}
\put(40.5,2){\line(1,0){2}}

\put(39,0){\line(0,1){3}}
\put(41.5,2){\line(0,1){1}}
\put(39,3){\line(1,0){2.5}}

\put(47,0){\line(0,1){1}}
\put(48,0){\line(0,1){1}}
\put(47,1){\line(1,0){1}}
\put(47.5,1){\line(0,1){1}}
\put(46,0){\line(0,1){2}}
\put(46,2){\line(1,0){1.5}}
\put(44,0){\line(0,1){1}}
\put(45,0){\line(0,1){1}}
\put(44,1){\line(1,0){1}}
\put(44.5,1){\line(0,1){2}}
\put(47,2){\line(0,1){1}}
\put(44.5,3){\line(1,0){2.5}}

\put(41,3){\line(0,1){1}}
\put(46,3){\line(0,1){1}}
\put(41,4){\line(1,0){5}}

\put(33.75,4){\line(0,1){1}}
\put(43.5,4){\line(0,1){1}}
\put(33.75,5){\line(1,0){9.75}}

\put(38.625,5){\line(0,1){1}}
\put(29,3){\line(0,1){3}}
\put(29,6){\line(1,0){9.625}}

%%%%%%%%%%%%%%%%%%%%

\put(49,0){\line(0,1){1}}
\put(50,0){\line(0,1){1}}
\put(49,1){\line(1,0){1}}

\put(52,0){\line(0,1){1}}
\put(53,0){\line(0,1){1}}
\put(52,1){\line(1,0){1}}
\put(52.5,1){\line(0,1){1}}
\put(51,0){\line(0,1){2}}
\put(51,2){\line(1,0){1.5}}

\put(55,0){\line(0,1){1}}
\put(56,0){\line(0,1){1}}
\put(55,1){\line(1,0){1}}
\put(55.5,1){\line(0,1){1}}
\put(54,0){\line(0,1){2}}
\put(54,2){\line(1,0){1.5}}

\put(57,0){\line(0,1){1}}
\put(58,0){\line(0,1){1}}
\put(57,1){\line(1,0){1}}
\put(59,0){\line(0,1){1}}
\put(60,0){\line(0,1){1}}
\put(59,1){\line(1,0){1}}
\put(57.5,1){\line(0,1){1}}
\put(59.5,1){\line(0,1){1}}
\put(57.5,2){\line(1,0){2}}

\put(55,2){\line(0,1){1}}
\put(58.5,2){\line(0,1){1}}
\put(55,3){\line(1,0){3.5}}

\put(56.75,3){\line(0,1){1}}
\put(52,2){\line(0,1){2}}
\put(52,4){\line(1,0){4.75}}

\put(49.5,1){\line(0,1){4}}
\put(54.5,4){\line(0,1){1}}
\put(49.5,5){\line(1,0){5}}

\put(62,0){\line(0,1){1}}
\put(63,0){\line(0,1){1}}
\put(62,1){\line(1,0){1}}
\put(62.5,1){\line(0,1){1}}
\put(61,0){\line(0,1){2}}
\put(61,2){\line(1,0){1.5}}

\put(64,0){\line(0,1){1}}
\put(65,0){\line(0,1){1}}
\put(64,1){\line(1,0){1}}
\put(64.5,1){\line(0,1){3}}

\put(69,0){\line(0,1){1}}
\put(70,0){\line(0,1){1}}
\put(69,1){\line(1,0){1}}
\put(69.5,1){\line(0,1){1}}
\put(68,0){\line(0,1){2}}
\put(68,2){\line(1,0){1.5}}
\put(66,0){\line(0,1){1}}
\put(67,0){\line(0,1){1}}
\put(66,1){\line(1,0){1}}
\put(66.5,1){\line(0,1){2}}
\put(69,2){\line(0,1){1}}
\put(66.5,3){\line(1,0){2.5}}

\put(72,0){\line(0,1){1}}
\put(73,0){\line(0,1){1}}
\put(72,1){\line(1,0){1}}
\put(72.5,1){\line(0,1){1}}
\put(71,0){\line(0,1){2}}
\put(71,2){\line(1,0){1.5}}

\put(74,0){\line(0,1){1}}
\put(75,0){\line(0,1){1}}
\put(74,1){\line(1,0){1}}
\put(76,0){\line(0,1){1}}
\put(77,0){\line(0,1){1}}
\put(76,1){\line(1,0){1}}
\put(74.5,1){\line(0,1){1}}
\put(76.5,1){\line(0,1){1}}
\put(74.5,2){\line(1,0){2}}

\put(72,2){\line(0,1){1}}
\put(75.5,2){\line(0,1){1}}
\put(72,3){\line(1,0){3.5}}

\put(68,3){\line(0,1){1}}
\put(64.5,4){\line(1,0){3.5}}

\put(66,4){\line(0,1){1}}
\put(74,3){\line(0,1){2}}
\put(66,5){\line(1,0){8}}

\put(53.5,5){\line(0,1){2}}
\put(70.5,5){\line(0,1){1}}

\put(62,2){\line(0,1){4}}
\put(62,6){\line(1,0){8.5}}
\put(69,6){\line(0,1){1}}

\put(53.5,7){\line(1,0){15.5}}

\put(37,6){\line(0,1){2}}
\put(62,7){\line(0,1){1}}
\put(37,8){\line(1,0){25}}

\put(13.5,6){\line(0,1){3}}
\put(51,8){\line(0,1){1}}
\put(13.5,9){\line(1,0){37.5}}
\put(39,9){\line(0,1){1}}

\end{picture}
\caption{Simulations of the beta-splitting model on 77 species for other parameters: (top) $\beta = -1.5$;  (bottom) $\beta = 0$.}
\label{Fig:cladogram0}
 \end{figure}

 The mathematical theme of  \cite{me_clad} was to introduce the {\em beta-splitting model} with split probabilities
 \begin{equation}
 q(n,i) = \frac{1}{a_n(\beta)} \ \frac{\Gamma(\beta + i + 1) \Gamma(\beta +n - i + 1)} { \Gamma(i+1) \Gamma(n-i+1)} , \ 1 \le i \le n-1 
 \label{rule-0}
 \end{equation}
with a parameter $- 2 \le \beta \le \infty$ and normalizing constant $a_n(\beta)$.
 The qualitative behavior of the model is different for $\beta > -1$ than for $\beta < -1$;
in the former case the height (number of edges to the root) of a typical leaf grows as order $\log n$, 
and in the latter case as order $n^{-\beta - 1}$.
In this article we are studying the {\em critical} case $\beta = -1$, with two motivations.\footnote{Hence our terminology CS for {\em critical splitting}. But note that {\em critical} in our context is quite different from the usual {\em critical} in the context of branching processes or percolation.}

 (a) 
A stochastic model, at a critical  parameter value separating qualitatively different behaviors (loosely called a ``phase transition" by analogy with statistical physics), often has mathematically interesting special properties: we are seeing this in the current project.

(b)
Second, as mentioned above our small-scale study of real phylogenetic trees in \cite{me_yule} suggested that, amongst all splits of clades of size $m$,
the median size of the smaller subclade scales roughly as $m^{1/2}$.
The $\beta = -1$ case of our model  has this property, immediately from the definition. 
More broadly, the model does seem to match qualitative features of real large phylogenetic trees.
As mentioned before, 
Figure  \ref{Fig:cladogram} compares a simulation of our $\beta = -1$ model with a real cladogram on 77 species; 
these appear visually similar.  In contrast,  simulations of the familiar alternative models  look
substantially different -- see Figure \ref{Fig:cladogram0} for the Markov model ($\beta = 0$) and the PDA model ($\beta = -1.5$).

\subsubsection{More about the general beta case}
\label{sec:more_beta}
As noted earlier,
the general beta-splitting model is often\footnote{\cite{me_clad} has 334  citations on Google Scholar.}  mentioned in the mathematical biology literature 
on phylogenetics as one of several simple stochastic models.
  See \cite{lambert,steel} for recent overviews of that literature.
Obviously it is biologically unsatisfactory  by not being a forward-in-time model  of extinctions and speciations, and indeed the latter type of model
with age-dependent speciation rates is more plausible and can match the shapes of real trees quite well \cite{age},
though whether one can identify rates uniquely remains a contentious issue \cite{pennell}.
Is the qualitative similarity of these different models just a coincidence, or is there some mathematical connection between the models?

In other words, as stated succinctly in the 2006 survey \cite{blum_francois} of phylogenetic trees
\begin{quote}
Our main result says that the data generally agree with a very simple probabilistic model: [$\beta$-splitting with $\beta = -1$].
However, it leaves us with the issue of providing biological motivation for this.
\end{quote}
That survey suggests an alternative model with the desired ``forwards in time" biological interpretation, and with somewhat 
similar behavior. 
For subsequent work and variations of that {\em BB model} see \cite{sainudiin}.

Some typical uses of the $\beta$-splitting family are
\begin{itemize}
\item 
Comparing statistics of different tree models with data trees: \cite{jones2011}.
\item
Estimating the best-fit value of $\beta$ gives one way
of quantifying the balance of different data trees, and unlike most other balance indices this
allows a direct comparison between different-sized trees: \cite{soewongsono}. 
\item
Simulating trees from different parameters $\beta$ provides one way to see whether different indices of balance are substantially correlated 
or not: \cite{fuchs}. 
\item 
Studying how close (to the root of underlying tree)
one might expect the MRCA (most recent common ancestor)
of a sampled subtree to be. 
\cite{steel2025}.
\item
Methodology for describing tree shape:
\cite{liu_comparing}
\item As a basis for a model with extra parameters:
\cite{distance-metrics,ranked_shapes}.
\end{itemize}

 On the mathematical side, \cite{pitman_mccullagh} have shown that these are the only consistent binary
fragmentation models of a certain ``Gibbs" type. 

% \newpage
 
\subsubsection{On simulations and asymptotics}
The recent paper \cite{xue} studies  the ``balance" of a leaf-labelled binary tree using a statistic
which is equivalent to
\begin{quote}
$C(tree) = $ average over leaves, of number of edges from the root to the leaf.
\end{quote}
In our model this is $L_n$. 
Consider $C(n)$,  the empirical fit to $C(tree)$ for $n$-leaf trees.
 The paper \cite{xue} says, based on existing data
 %\footnote{In fact they relate $C$ to $A = 2n-1$; this does not affect our discussion.} 
 \begin{quote}
 How do real data scale? Remarkably, it is found that, over
 three orders of magnitude of $n$, there is a power-law scaling $C(n) \sim n^\eta$, with the exponent 
 $\eta = 0.44 \pm 0.01$. 
 \end{quote}
 For a model that reproduces this power law,   \cite{xue} suggests a model 
%(I haven't looked carefully at the model or its physics-style analysis)
based on ``niche construction".
 They also have extensive references to previous work.
 This power law of course seems quite different from our model prediction
 (Theorem \ref{TL1}) that
\[
C(n) \sim \tfrac{1}{2\zeta(2)}  \log^2 n.
\]
This perhaps illustrates the dangers of relating asymptotics to pre-asymptotics.
 Figure \ref{Fig:scaling} plots, over a range of $n$ from approximately 20
 to 1,000,
 the exact value  of $\Ex[L_n]$ in our model. 
We see that on the log-log plot it is almost linear in $n$ over this range: our model predicts $C(n) \approx n^{0.34}$ on this range.
This indicates the difficulty in interpreting power-law relationships as justifying any particular model.

 Note that our model, although designed to fit the observed balance of large clades,  also gives explicit predictions for small fringe trees, as discussed 
 in Section \ref{sec:propfringe} below.
 
% \vspace*{-1.4in}
 
 \begin{center}
 \begin{figure}
 \hspace*{0.7in}
 \includegraphics[width=4.3in]{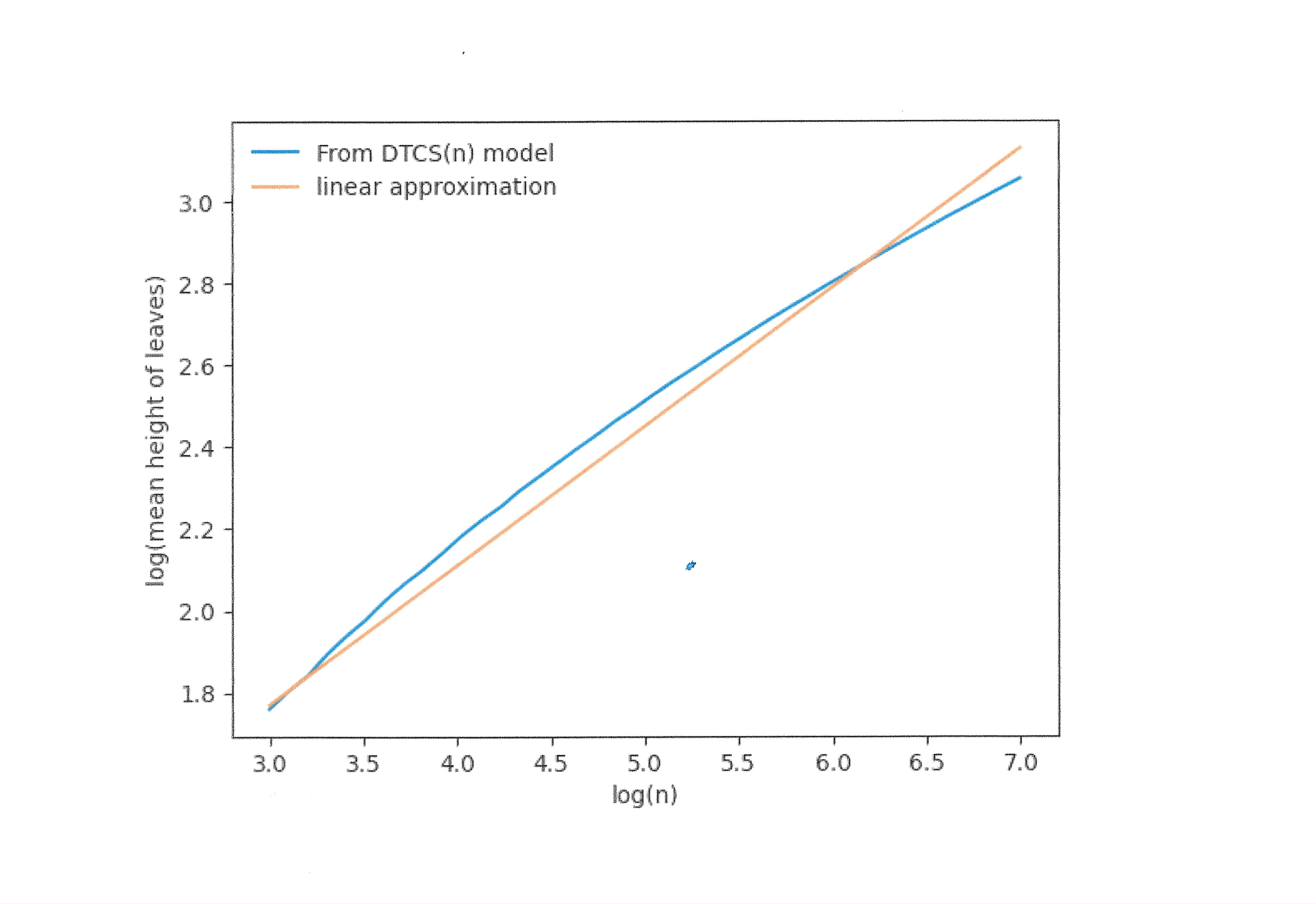}
 \caption{A log-log plot of $\Ex[L_n]$ over $20 \le n \le 1000$ is almost a straight line.}
 \label{Fig:scaling}
 \end{figure}
 \end{center}

\subsection{Properties of the fringe tree}
\label{sec:propfringe}

%{\color{blue} Copied from previous version: same as in paper 3.}

There are many aspects of the fringe tree that one could study.
One can study it as an interesting process in its own right -- loosely analogous to a stationary process indexed by $\Ints$, 
and in that analogy we could call it the {\em fringe process}
(see Section \ref{sec:fringeterm}).

Recall that, in the fringe tree, the probability that a leaf is in some clade of size $i$ equals $a(i)$.
Because a clade of size $i$ has the $\DTCS(i)$ distribution, we can then calculate the probability 
$p(\chi)$ that a leaf is in a specific clade $\chi$.
Some results are shown in Figure \ref{small_clades}. 
In that figure we have grouped clades with the same {\em shape}, meaning that 
(as in the biology use) we do not distinguish left and right branches. 
Figure \ref{small_clades} compares these model predictions with the data from a small set of real cladograms\footnote{Dragonflies \cite{dragonfly:phy}, eagles \cite{eagles:phy}, elms \cite{elms:phy}, gamebirds \cite{gamebirds:phy}, 
ladybirds \cite{ladybirds:phy}, parrots\cite{parrot:phy}, primates \cite{primates:phy}, sharks \cite{sharks:phy}, snakes \cite{snakes:phy}, swallows \cite{swallows:phy}}
 -- 10 cladograms with a total of 995 species.

These results can be compared with the corresponding results for some other
models of random cladograms in \cite[Appendix A]{SJ385},
see also \cite{Ischebeck}.  Note that the
models treated in \cite{SJ385} are precisely the cases $\beta=\infty, 0, -3/2$
of the beta-splitting tree \cite{me_clad}.

But also one can use the fringe tree to study asymptotics of statistics of $\DTCS(n)$ or $\CTCS(n)$, for statistics which depend only on the structure of the tree near the leaves. 
In particular, the number $N_n(\chi)$ of copies of a size-$i$ clade $\chi$ in $\DTCS(n)$ will satisfy
$n^{-1} \Ex[N_n(\chi)] \to p(\chi)/i$.
By analogy with results for other random tree models -- 
see \cite{fringe} sec.\ 14  -- and because occurrences of a given $\chi$ are only locally dependent,
it should not be difficult to resolve
\begin{OP}
 \label{OP:varNn}
Prove that $n^{-1} \var(N_n(\chi))$ converges to some limit $\sigma^2(\chi)$ and that the corresponding CLT holds.
\end{OP}
Another example is illustrated in the next section.

\begin{figure}[ht]
\setlength{\unitlength}{0.055in}%was 0.06in
\begin{picture}(60,80)(0,-64)
\multiput(0,0)(3,0){2}{\circle*{0.9}}
\put(0,0){\line(0,1){4}}
\put(3,0){\line(0,1){4}}
\put(0,4){\line(1,0){3}}
\put(1.5,4){\line(0,1){2}}
\put(-6,-4){$\sfrac{6}{\pi^2} \doteq 0.6079$}
\put(-1,-7){[0.573]}

\multiput(15,0)(3,0){3}{\circle*{0.9}}
\put(15,0){\line(0,1){8}}
\put(18,0){\line(0,1){4}}
\put(21,0){\line(0,1){4}}
\put(18,4){\line(1,0){3}}
\put(19.5,4){\line(0,1){4}}
\put(15,8){\line(1,0){4.5}}
\put(18,8){\line(0,1){2}}
\put(10,-4){$\sfrac{9}{2\pi^2} \doteq 0.4559$}
\put(16,-7){[0.491]}

\multiput(30,0)(3,0){4}{\circle*{0.9}}
\put(30,0){\line(0,1){12}}
\put(33,0){\line(0,1){8}}
\put(36,0){\line(0,1){4}}
\put(39,0){\line(0,1){4}}
\put(36,4){\line(1,0){3}}
\put(37.5,4){\line(0,1){4}}
\put(33,8){\line(1,0){4.5}}
\put(36,8){\line(0,1){4}}
\put(30,12){\line(1,0){6}}
\put(34.5,12){\line(0,1){2}}
\put(26.5,-4){$\sfrac{8}{3\pi^2} \doteq 0.2702$}
\put(32.3,-7){[0.285]}

\multiput(51,0)(3,0){4}{\circle*{0.9}}
\put(51,0){\line(0,1){4}}
\put(54,0){\line(0,1){4}}
\put(51,4){\line(1,0){3}}
\put(52.5,4){\line(0,1){4}}
\put(57,0){\line(0,1){4}}
\put(60,0){\line(0,1){4}}
\put(57,4){\line(1,0){3}}
\put(58.5,4){\line(0,1){4}}
\put(52.5,8){\line(1,0){6}}
\put(55.5,8){\line(0,1){2}}
\put(46.5,-4){$\sfrac{1}{\pi^2} \doteq 0.1013$}
\put(52.5,-7){[0.120]}

\multiput(69,0)(3,0){5}{\circle*{0.9}}
\put(69,0){\line(0,1){16}}
\put(72,0){\line(0,1){12}}
\put(75,0){\line(0,1){8}}
\put(78,0){\line(0,1){4}}
\put(81,0){\line(0,1){4}}
\put(78,4){\line(1,0){3}}
\put(79.5,4){\line(0,1){4}}
\put(75,8){\line(1,0){4.5}}
\put(78,8){\line(0,1){4}}
\put(72,12){\line(1,0){6}}
\put(76.5,12){\line(0,1){4}}
\put(69,16){\line(1,0){7.5}}
\put(75,16){\line(0,1){2}}
\put(66.5,-4){$\sfrac{15}{11\pi^2} \doteq 0.1382$}
\put(72.5,-7){[0.125]}

%%%%%%%%%%%%%%%%%%%

\multiput(0,-30)(3,0){5}{\circle*{0.9}}
\put(0,-30){\line(0,1){12}}
\put(0,-18){\line(1,0){7.5}}
\put(6,-18){\line(0,1){2}}
\put(3,-30){\line(0,1){4}}
\put(6,-30){\line(0,1){4}}
\put(9,-30){\line(0,1){4}}
\put(12,-30){\line(0,1){4}}
\put(3,-26){\line(1,0){3}}
\put(9,-26){\line(1,0){3}}
\put(4.5,-26){\line(0,1){4}}
\put(10.5,-26){\line(0,1){4}}
\put(4.5,-22){\line(1,0){6}}
\put(7.5,-22){\line(0,1){4}}
\put(-3,-34){$\sfrac{45}{88\pi^2} \doteq 0.0518$}
\put(3,-37){[0.055]}

\multiput(21,-30)(3,0){5}{\circle*{0.9}}
\put(21,-30){\line(0,1){4}}
\put(24,-30){\line(0,1){4}}
\put(21,-26){\line(1,0){3}}
\put(27,-30){\line(0,1){8}}
\put(30,-30){\line(0,1){4}}
\put(33,-30){\line(0,1){4}}
\put(30,-26){\line(1,0){3}}
\put(31.5,-26){\line(0,1){4}}
\put(27,-22){\line(1,0){4.5}}
\put(30,-22){\line(0,1){4}}
\put(22.5,-26){\line(0,1){8}}
\put(22.5,-18){\line(1,0){7.5}}
\put(27,-18){\line(0,1){2}}
\put(18,-34){$\sfrac{5}{4\pi^2} \doteq 0.1267$}
\put(24,-37){[0.145]}

\multiput(42,-30)(3,0){6}{\circle*{0.9}}
\put(42,-30){\line(0,1){20}}
\put(45,-30){\line(0,1){16}}
\put(48,-30){\line(0,1){12}}
\put(51,-30){\line(0,1){8}}
\put(54,-30){\line(0,1){4}}
\put(57,-30){\line(0,1){4}}
\put(54,-26){\line(1,0){3}}
\put(55.5,-26){\line(0,1){4}}
\put(51,-22){\line(1,0){4.5}}
\put(54,-22){\line(0,1){4}}
\put(48,-18){\line(1,0){6}}
\put(52.5,-18){\line(0,1){4}}
\put(45,-14){\line(1,0){7.5}}
\put(51,-14){\line(0,1){4}}
\put(42.5,-10){\line(1,0){9}}
\put(47,-10){\line(0,1){2}}
\put(40,-34){$\sfrac{864}{1375\pi^2} \doteq 0.0637$}
\put(46,-37){[0.030]}

\multiput(66,-30)(3,0){6}{\circle*{0.9}}
%\multiput(0,-30)(3,0){5}{\circle*{0.9}}
\put(66,-30){\line(0,1){16}}
\put(66,-14){\line(1,0){9}}
\put(70.5,-14){\line(0,1){2}}
\put(69,-30){\line(0,1){12}}
\put(69,-18){\line(1,0){7.5}}
\put(75,-18){\line(0,1){4}}
\put(72,-30){\line(0,1){4}}
\put(75,-30){\line(0,1){4}}
\put(78,-30){\line(0,1){4}}
\put(81,-30){\line(0,1){4}}
\put(72,-26){\line(1,0){3}}
\put(78,-26){\line(1,0){3}}
\put(73.5,-26){\line(0,1){4}}
\put(79.5,-26){\line(0,1){4}}
\put(73.5,-22){\line(1,0){6}}
\put(76.5,-22){\line(0,1){4}}
\put(65,-34){$\sfrac{324}{1375\pi^2} \doteq 0.0239$}
\put(71,-37){[0.024]}

%%%%%%%%%%%%%%%%%%%%%%%%%%%%

\multiput(0,-56)(3,0){6}{\circle*{0.9}}
\put(0,-56){\line(0,1){16}}
\put(0,-40){\line(1,0){9}}
\put(4.5,-40){\line(0,1){2}}
\put(3,-56){\line(0,1){4}}
\put(6,-56){\line(0,1){4}}
\put(3,-52){\line(1,0){3}}
\put(9,-56){\line(0,1){8}}
\put(12,-56){\line(0,1){4}}
\put(15,-56){\line(0,1){4}}
\put(12,-52){\line(1,0){3}}
\put(13.5,-52){\line(0,1){4}}
\put(9,-48){\line(1,0){4.5}}
\put(12,-48){\line(0,1){4}}
\put(4.5,-52){\line(0,1){8}}
\put(4.5,-44){\line(1,0){7.5}}
\put(9,-44){\line(0,1){4}}
\put(-1.5,-60){$\sfrac{72}{125\pi^2} \doteq 0.0584$}
\put(4.5,-63){[0.042]}

\multiput(24,-56)(3,0){6}{\circle*{0.9}}
\put(24,-56){\line(0,1){4}}
\put(27,-56){\line(0,1){4}}
\put(24,-52){\line(1,0){3}}
\put(25.5,-52){\line(0,1){12}}
\put(25.5,-40){\line(1,0){9}}
\put(30,-40){\line(0,1){2}}
\put(30,-56){\line(0,1){12}}
\put(33,-56){\line(0,1){8}}
\put(36,-56){\line(0,1){4}}
\put(39,-56){\line(0,1){4}}
\put(36,-52){\line(1,0){3}}
\put(37.5,-52){\line(0,1){4}}
\put(33,-48){\line(1,0){4.5}}
\put(36,-48){\line(0,1){4}}
\put(30,-44){\line(1,0){6}}
\put(34.5,-44){\line(0,1){4}}
\put(22.5,-60){$\sfrac{36}{55\pi^2} \doteq 0.0663$}
\put(28.5,-63){[0.054]}

\multiput(48,-56)(3,0){6}{\circle*{0.9}}
%\multiput(51,0)(3,0){4}{\circle*{0.9}}
\put(48,-56){\line(0,1){4}}
\put(51,-56){\line(0,1){4}}
\put(48,-52){\line(1,0){3}}
\put(49.5,-52){\line(0,1){8}}
\put(49.5,-44){\line(1,0){9}}
\put(54,-44){\line(0,1){2}}
\put(54,-56){\line(0,1){4}}
\put(57,-56){\line(0,1){4}}
\put(54,-52){\line(1,0){3}}
\put(55.5,-52){\line(0,1){4}}
\put(60,-56){\line(0,1){4}}
\put(63,-56){\line(0,1){4}}
\put(60,-52){\line(1,0){3}}
\put(61.5,-52){\line(0,1){4}}
\put(55.5,-48){\line(1,0){6}}
\put(58.5,-48){\line(0,1){4}}
\put(46.5,-60){$\sfrac{27}{110\pi^2} \doteq 0.0249$}
\put(52.5,-63){[0.030]}

\multiput(72,-56)(3,0){6}{\circle*{0.9}}
\put(72,-56){\line(0,1){8}}
\put(75,-56){\line(0,1){4}}
\put(78,-56){\line(0,1){4}}
\put(75,-52){\line(1,0){3}}
\put(76.5,-52){\line(0,1){4}}
\put(72,-48){\line(1,0){4.5}}
\put(75,-48){\line(0,1){4}}
\put(81,-56){\line(0,1){8}}
\put(84,-56){\line(0,1){4}}
\put(87,-56){\line(0,1){4}}
\put(84,-52){\line(1,0){3}}
\put(85.5,-52){\line(0,1){4}}
\put(81,-48){\line(1,0){4.5}}
\put(84,-48){\line(0,1){4}}
\put(75,-44){\line(1,0){9}}
\put(79.5,-44){\line(0,1){2}}
\put(70.5,-60){$\sfrac{2}{5\pi^2} \doteq 0.0405$}
\put(76.5,-63){[0.024]}

 \end{picture}
\caption{Proportions of leaves in clades of a given shape, for each shape with $2 - 6$ leaves in the fringe tree.
The top number is from our model, the bottom number $[ \cdots ]$ from our small data set.
%{\color{blue} Same figure in paper 3.}}
}
\label{small_clades}
\end{figure}
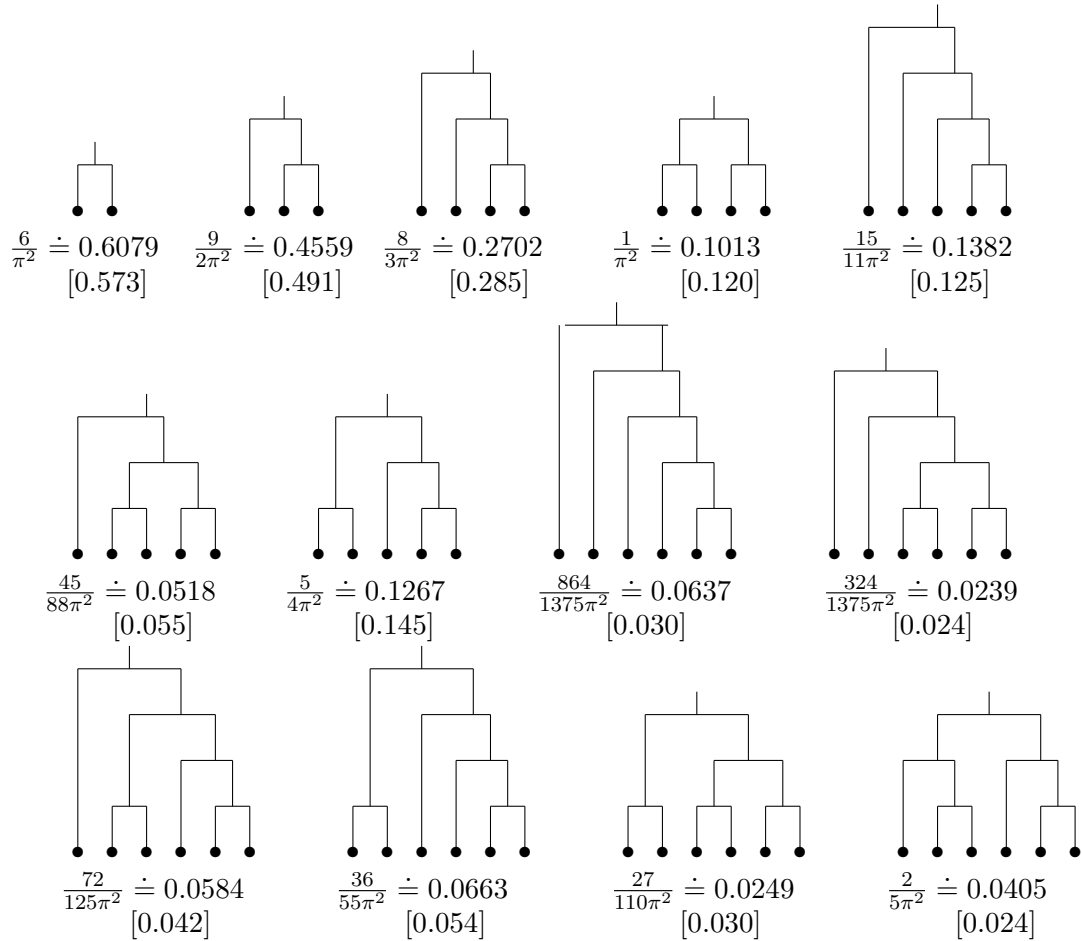

 \subsection{The length of $\CTCS(n)$}
\label{sec:Clength1}

%{\color{blue} Copied from previous version, plus reference to new proof in \cite{beta4-arxiv}.}

The number of edges of $\CTCS(n)$ equals $n-1$.
% if we visualize the tree as in Figure \ref{Fig:1} (right) without edges adjacent to leaves.
Identifying {\em length} of an edge with {\em duration of time}, one can consider the length $\Lambda_n$ of 
$\CTCS(n)$, that is the sum of edge-lengths.
The expectation of the number of size-$i$ clades 
in $\CTCS(n)$ equals $\sfrac{n}{i} a(n,i)$, so we immediately have
\begin{equation}
\Ex [\Lambda_n] =  n \sum_{i=2}^n \frac{ a(n,i)}{i h_{i-1}} .
\label{ELn}
\end{equation}
Because $\lim_n a(n,i) = a(i)$ and
\begin{equation}
\sum_{i=2}^\infty \frac{ a(i)}{i h_{i-1}} = \frac{6}{\pi^2} \sum_{i=2}^\infty \frac{1}{i(i-1)} = \frac{6}{\pi^2} 
\label{iii}
\end{equation}
we naturally expect
\begin{Proposition}
\label{P:length}
$\lim_n n^{-1}  \Ex [\Lambda_n] = \frac{6}{\pi^2}$.
\end{Proposition}
This is proved in \cite[Theorem 9.1]{beta4-arxiv} as a consequence of the Mellin transform analysis.
An alternative ``probabilistic" proof is given in Appendix \ref{sec:Clength2}.

The fact that the limit equals $a(2)$ has an intriguing consequence -- see Appendix \ref{sec:coincide}.
As with the subtree counts, mentioned in Section \ref{sec:propfringe},
 it should not be difficult to resolve
\begin{OP}
\label{OP:varL}
Prove that $n^{-1} \var(\Lambda_n)$ converges to some limit $\sigma^2$ and that the corresponding CLT holds.
\end{OP}

\subsection{Combinatorial questions}
\label{sec:combin}

%{\color{blue} Slightly expanded from paper 3.}

There are a range of what one might call ``combinatorial" questions related to the fringe tree.
What is the probability that two independent copies of $\DTCS(n)$ have the same shape?
Numerics for $n \le 200$ suggest a probability $\asymp 0.44^n$, but as in the discussion around Figure \ref{Fig:scaling} we have little confidence that this is the correct asymptotics.

Regarding  the number $N_n(\chi)$ of copies of a clade $\chi$ in $\DTCS(n)$,
one could study distributions of the following:
\begin{OP} 
\label{OP:Nnchi}. 
\mbox{ } 
\begin{itemize}
 \item The number $K_n := \sum_\chi 1_{(N_n(\chi) \ge 1)}$ of different-shape clades within (a realization of) $\DTCS(n)$.
\item The largest clade that appears more than once within $\DTCS(n)$.
\item The smallest clade that does not appear within $\DTCS(n)$.
  \end{itemize}
  \end{OP}

There are two issues here.
  First, the numerical values of $p(\chi)$ in Figure \ref{small_clades} are calculated recursively: 
  we don’t have a good intrinsic description of the set of probabilities
  $\{p(\chi) : |\chi| = m\}$ over all $m$-leaf trees.
  
  The second issue is more pervasive. 
  Theorem \ref{T:alimit} is a pointwise result: for fixed $m$ we have $a(n,m) \to a(m)$ as $n \to \infty$.
  In considering limits of statistics of $\DTCS(n)$, to go beyond the limit numbers of copies of a fixed fringe tree
  (Figure \ref{small_clades}), one needs sharper results, of the following type.

 \subsection{A technical obstacle}
\label{sec:obstacle}

%{\color{blue} New material.}

As noted in the discussion in [\cite{beta1} Theorem 2.16], numerical evidence strongly supports
the conjecture that
$n \to a(n,i)$ is decreasing, for each $i \ge 2$.
This might be helpful for the following problem.

\begin{OP}
\label{OP:an}
Find explicit bounds for $|a(n,m) - a(m)|$.
In particular, prove the following Ansatz.
\end{OP}

\begin{Ansatz}
\label{Ansatz}
For a non-negative sequence $(f(j), j \ge 2)$ such that $f(j) = O(j^k)$ for some $k < \infty$:

\noindent
(i) If\/ $\sum_{i = 2}^\infty a(i)f(i) < \infty$ then $\sum_{i = 2}^n a(n,i)f(i) \to \sum_{i = 2}^\infty a(i)f(i)$.

\noindent
(ii) If\/ $\sum_{i = 2}^\infty a(i)f(i) = \infty$ then $\sum_{i = 2}^n a(n,i)f(i) \sim \sum_{i = 2}^n a(i)f(i)$.
\end{Ansatz}

In fact we have implicitly proved a number of special cases by ad hoc methods, for example 
the special case of the length of $\CTCS$ (section \ref{sec:Clength1}), where $f(j) = 1/(j h_{j-1})$.
Lack of a general proof is an annoying obstacle to further general rigorous progress. 
For instance, using the ansatz one can readily obtain the asymptotics of various {\em balance indices} (section \ref{sec:balance_ex}).
%Heuristic arguments for different balance indices in section \ref{sec:balance_ex} use the Ansatz for. $f(j) \to 1/2$ and $f(j) \sim j/4$.
And one would want to use the ansatz also for the ``combinatorial problems"  in section \ref{sec:combin} and  the  ``powers of subtree sizes" problems in section \ref{sec:powers}.

\subsection{Tree balance indices}
\label{sec:tree_balance}

%{\color{blue} New material.}

A longstanding topic in mathematical and statistical phylogenetics concerns  {\em tree balance indices}, that is statistics which measure ``balance" of a given tree
in some quantitative way. 
An authoritative comprehensive study of this topic  is given in the  monograph \cite{fischer2023tree},  
which (amongst much other material) records what is known about the mean and variance of each index under the two classical probability models, Yule and uniform.
It is natural to ask about the distribution of indices under our 
model.\footnote{These balance indices refer to cladograms, so we are concerned with the discrete-time form of our model.}.
Tables 1 and 2 of  \cite{fischer2023tree} list 19 examples of balance indices, 
 and our Table \ref{table:balance} lists those addressed in this article. 
 For an index $\mathbb{I}$ we write $\mathbb{I}(n)$ for the random variable obtained when $\mathbb{I}$ is applied to a realization of $\DTCS(n)$.
 We then seek to study the distribution of $\mathbb{I}(n)$.
 
 For this purpose, different aspects of our mathematical results can be helpful.
 In particular, the results surrounding the HD chain and the fringe distribution.
 Recall
 \begin{itemize}
 \item The mean number of subclades of size $m$ in $\DTCS(n)$ equals $\frac{n}{m} a(n,m)$.
 \item $a(n,m) \to a(m)$ as $n \to \infty$ for fixed $m \ge 1$.
 \item $a(m) = \frac{6h_{m-1}}{\pi^2(m-1)}$ for $m \ge 2$.
 \end{itemize}
 Some indices are determined in some ``additive" way based on the set of  splits $m \to (i,m-i)$,
 and in some such cases %\footnote{In particular  for the indices $C, QC, \hat{s}, rQI$ in Table \ref{table:balance}.}
 one can immediately write down an expression for $\Ex[ \mathbb{I}(n)]$ in terms of the $a(n,m)$.
Then we seek to deduce the asymptotics of $\Ex[ \mathbb{I}(n)]$ from the limits  $a(m)$.
%$a(n,i) \to a_i$ in Theorem \ref{T:alimit}. 
At this point we need to invoke Ansatz \ref{Ansatz}.
Implicitly this method can work only for indices that are not sensitive to the near-root structure, which fortunately is true for most indices. 
The same method for ``additive" indices has been used for the classical random tree models,
 but there one has more explicit formulas for the analog of our $a(n,i)$.
 
 In this article we will only give heuristics --  back-of-an-envelope calculations for the  asymptotics of  $\Ex[ \mathbb{I}(n)]$ -- in the next section.
 (Some use different methods, such as analysis of a recursion).
Of course  one would like to move beyond expectation, to study variance and limit distributions. 
This is another key open problem, with substantial scope for future systematic study.
The following open problem continues the themes of Open Problems \ref{OP:varNn} and  \ref{OP:varL}.

\begin{OP}
\label{OP:tree_bal} 
Write $N^{(n)}_m$ for the number of size-$m$ clades in $\DTCS(n)$.
Study the joint distribution of $(N^{(n)}_m, 2 \le m \le n)$ in such a way that one can calculate covariances and deduce CLTs.
\end{OP}
As noted in section \ref{sec:propfringe}, the local weak limit (from a random leaf) of $\DTCS(n)$ is a kind of stationary tree process on leaves $ \ldots, -2, -1, 0, 1, 2, \ldots$,
so one might start by proving CLTs within that structure, before seeking to transfer them to the asymptotics of $\DTCS(n)$. 
The large literature on CLTs for under mixing conditions is recounted in detail in \cite{bradley}.

\begin{table}
\caption{Some balance indices}
\centering
\begin{tabular}{lll}
\hline 
\hline
Name &       Notation in   \cite{fischer2023tree} &    our section \\
\hline 
Average leaf height & $\bar{N}(n)$  & \ref{sec:DTsetting}: our $L_n$\\
Colless index & $C(n)$ & \ref{sec:colless} \\
Quadratic Colless index & $QC(n)$& \ref{sec:colless2} \\
$B_1$ index & $B_1(n)$ & \ref{sec:B1} \\
$B_2$ index & $B_2(n)$ & \ref{sec:balance}  \\
$\hat{s}$-shape & $\hat{s}(n)$ & \ref{sec:shat} \\
Total cophenetic index& $\Phi(n)$ & \ref{sec:coph} \\
Variance of leaf heights & $\sigma^2_N(n)$ &\ref{sec:vard} \\
Rooted quartet index & $rQI(n)$ & \ref{sec:rQI} \\
\end{tabular}
\label{table:balance} 
\end{table}

\subsection{Examples and heuristics for balance indices}
\label{sec:balance_ex}

%{\color{blue} New material.}

\subsubsection{Colless index}
\label{sec:colless}
The Colless index $C$ is the sum over all splits $m \to (i,m-i)$ of the size difference $| i - (m-i)|$.
So
\[ \Ex[ C(n)] = \sum_{m=3}^{n}  \frac{n}{m} a(n,m) c(m) \]
\[ c(m) = \sum_{i-1}^{m-1} q(m,i) | i - (m-i)|  .
\]
A brief calculation shows $c(m)  \sim m/2$.
Using the Ansatz (here with $f(j) \to 1/2$)
 we can approximate $a(n,m)$ as $a_m = \frac{6h_{m-1}}{\pi^2(m-1)}$, and find
\[ \Ex[ C(n)] \sim \frac{3n}{\pi^2} \sum_{m=3}^{n}  \frac{h_{m-1}}{m-1}  \sim \frac{3 n \log^2 n}{2\pi^2} .
\]

\subsubsection{Quadratic Colless index} 
\label{sec:colless2}
The Quadratic Colless index $C$ is the sum over all splits $m \to (i,m-i)$ of the squared size difference $(i - (m-i))^2$.
So

\[ \Ex[ QC(n)] = \sum_{m=3}^{n}  \frac{n}{m} a(n,m) g(m) \]
\[ g(m) = \sum_{i-1}^{m-1} q(m,i) (i - (m-i))^2. 
\]
A brief calculation shows $g(m)  \sim m^2/4$.
Again using the Ansatz (here with $f(j) \sim j/4$) to approximate  $a(n,m)$ as $a_m = \frac{6h_{m-1}}{\pi^2(m-1)}$, we find
\[ \Ex[ QC(n)] \sim \frac{3n}{2\pi^2} \sum_{m=3}^{n} h_{m-1} \sim  \frac{3n^2 \log n}{2\pi^2} .
\]

\subsubsection{The $B_1$ index}
\label{sec:B1}
The $B_1$ index is the  sum of the reciprocal of the heights of the subclades.  So
\[\Ex[B_1(n)] = \sum_{m=2}^{n}  \frac{n}{m} a(n,m) u(m) \]
\[ u(m) = \Ex[1/(\mathrm{height \ of } \DTCS(m)) ]. \]
We have not studied $u(m)$, but because $u(m) \le 1$ the sum $\sum_m
\frac{1}{m} a(m) u(m)$ is convergent and
so we expect\SJ
\[ \lim_n n^{-1} [\Ex[B_1(n)] = \sum_{m=2}^{\infty}  \frac{a(m)}{m} u(m) . \]

\subsubsection{The $\hat{s}$-shape index}
\label{sec:shat}
The $\hat{s}$-shape index is the
sum of $\log(m-1)$  over all splits $m\to (i,m-i)$.
So
\[ \Ex [\hat{s}(n)] = \sum_{m=3}^{n}  \frac{n}{m} a(n,m) \log(m-1) . \]
Again the sum is convergent, so we expect
\[ \lim_n  n^{-1}  \Ex [\hat{s}(n)] = \sum_{m=3}^{n}  \frac{1}{m} a(m) \log(m-1) < \infty .
\]

\subsubsection{Total cophenetic index}
\label{sec:coph} 
The ``cophenetic values" of a pair of leaves is the discrete height of their branchpoint.
The {\em total cophenetic index} 
$\Phi$ is the sum of the cophenetic values over all different pairs of leaves.
So
\[ \Ex[\Phi(n)] = {n \choose 2} \times \Ex[D_{n,2}]  \]
for $D_{n,2}$ as in Proposition \ref{P:roots} later.
There it is shown that $\Ex [D_{n,2}] \sim \log n$, and so
\[ \Ex[\Phi(n)] \sim \sfrac{1}{2} n^2 \log n .
\]

\subsubsection{Variance of leaf heights}
\label{sec:vard}

Our Theorem \ref{T:varL} (\cite{beta1} Theorem 1.2) shows that the discrete leaf height $L_n$
has unconditional variance 
$\sim \tfrac{2\zeta(3)}{3\zeta^3(2)}\log^3 n$.
In the present context we are concerned with the conditional variance given the realization of the tree.
However the ``asymptotic uncorrelation" result 
\eqref{rhon} for $\alpha = 1$ \cite[Theorem 2.6]{beta1}
implies that the 
expectation of the conditional variance has the same first-order asymptotics.
%$\sigma_N^2$ variance of the leaf depth.

\subsubsection{Rooted quartet index}
\label{sec:rQI}
A version of the rooted quartet index $rQI$  counts the number of 4-leaf-sets whose induced subtree is the completely-balanced 
tree $\chi$ of size 4.
By the consistency property, this is just ${n \choose 4}$ times the probability $q(4,2) = 3/11$ that $\DTCS(4)$ is completely balanced:
\[ \Ex [rQI(n)] = \frac{3}{11} {n \choose 4} .\]

%By considering the size $m$ of the subclade rooted at the root of the induced subtree, 
%we have the exact formula.
%\[ \Ex [rQI(n)] = \sum_{m=4}^{n} \frac{n}{m} a(n,m) \sum_{i=2}^{m-2} q(m,i)        {i \choose 2} {m-i \choose 2} .
%\]
%However we already know the asymptotics from Figure \ref{small_clades}: 
%in the fringe $p(\chi) = 1/\pi^2$ so
%\[ \Ex [rQI(n)] \sim {n \choose 4} p(\chi) \sim \frac{n^4}{24 \pi^2} . \]

\begin{OP}
\label{OP:indices}
Study the distribution of these and other indices for $\DTCS(n)$ in more detail.
\end{OP}

Because our data studies of splitting and of the fringe distributions  (Figure \ref{small_clades})
were small-scale:
\begin{OP}
\label{OP:more_data}
Repeat these data studies on a larger scale and for other indices.
\end{OP}

Though we do not  expect the model to provide quantitatively accurate matches to real data, 
the point is that  more elaborate biologically-motivated models of the kind described in  \cite{lambert,steel} 
typically have real-valued parameters fitted to the individual tree data;
how much do they improve on our zero-parameter model?
In this context our asymptotics are irrelevant -- one can just simulate $\DTCS(n)$ numerically.

\subsection{Notes on fringe terminology}
\label{sec:fringeterm}
In the context of a large tree, the word {\em fringe} is informally used to mean the part of the tree near the leaves, rather than near the root. 
In this article, the specific {\em fringe distribution} is defined as the local weak limit of $\DTCS(n)$ relative to a uniform random leaf.  
 See \cite{me-fringe,bhamidi2025,fringe} for general accounts of local weak convergence.
The fringe distribution is
formally a probability distribution on the space of binary trees with a countable number of leaves (one leaf a distinguished ``root") and with a unique ``end", that is an asymptotic path to infinity.
This is essentially a special case of the notion of local weak convergence 
for sparse random graphs, modified because there one uses a uniform random vertex.\footnote{There is a straightforward connection between these two conventions.}
Such limits of graphs are examples of {\em unimodular} graphs, 
so our fringe distribution is a small modification of a unimodular tree.
However, in our model the left-right ordering of leaves provides extra structure, and 
we can label the leaves as $\{\ldots, -2, -1, 0 , 1, 2,  \ldots \}$
with the root as $0$.

One should visualize a realization of such a tree as in Figure \ref{Fig:cladogram}, but with leaves labelled as 
$\ldots, -2, -1, -0, 1, 2, \ldots$,
and with branches randomly positioned left/right instead of the biology convention 
of usually positioning the larger clade to the right.  
So a {\em fringe tree} $\TT$ is a 
random infinite tree whose distribution is the specific fringe distribution.
By re-rooting a realization of the fringe tree  at leaf $i$, 
we get a stationary random process $(\TT_i, - \infty < i < \infty)$ taking values in the tree-space.
Viewed this way one could call it a {\em fringe  process}.
Arguing as in \cite{me-fringe} one could show that this fringe process is ergodic, thereby 
obtaining a.s. limit theorems for averages of suitable functionals $m^{-1} \sum_{i = 0}^{m-1} \Phi(\TT_i) $
from the classical ergodic theorem.
The balance indices  in section \ref{sec:balance_ex} would be examples of such functionals.

For this article, we are interested in the fringe distribution in the context of
limits of finite $\DTCS(n)$ trees.
One can regard the ``fringe distribution" as determined by the collection of distributions over all finite trees, as 
indicated in Figure \ref{small_clades}.
By definition, the fringe distribution gives first order limits, in the sense that, 
writing $(\TT^{(n)}_i, 1 \le i \le n)$ for a realization of $\DTCS(n)$ rooted at each leaf $i$,
we have 
\[ \Ex[ n^{-1} \sum_{i = 1}^{n} \Phi(\TT^{(n)}_i) ] \to \Ex[ \Phi(\TT)] \]
for suitable functionals $\Phi$.
But can we get second-order limits also?
\begin{OP}
\label{OP:secondorder}
If we know that, for a given functional $\Phi$, a CLT holds for the fringe process
$ \sum_{i = 0}^{m-1} \Phi(\TT_i) $, does the same CLT necessarily also hold for $\DTCS(n)$, that is for
$\sum_{i = 1}^{n} \Phi(\TT^{(n)}_i)$? 
\end{OP}

 \section{The consistency property and the exchangeability representation}
\label{sec:consistent-exch}
\subsection{The consistency property}\label{sec:consistency}

%{\color{blue} Copied from previous version of paper 2: very similar to paper 3.}

The interval-splitting construction of $\CTCS(n)$ does  implicitly assign leaf-labels $\{1,2,\ldots,n\}$ 
but conceptually we are thinking of recursively splitting a set of objects which have labels but 
without any prior structure on the label-set. 
As mentioned in section \ref{sec:unordered},
 it is convenient to re-define $\CTCS(n)$ by applying a uniform random permutation to these leaf-labels.\footnote{This of course yields a certain type of (finite) exchangeability, suggesting a limit structure involving infinite exchangeability, described in Section \ref{sec:paintbox}.}
This does not affect earlier results, except for the ``correlation" feature in Section \ref{sec:depends}.
So our ``path to a uniform random leaf" is equivalent (in distribution) to ``path to leaf $1$".
And ``delete a uniform random leaf" is equivalent to ``delete leaf $n$".
Now we can define a ``delete a leaf, and prune" operation, illustrated in Figure \ref{Fig:dpo}.

 Note that the length of horizontal edges in the figure has no significance; these edges serve only to indicate 
 which are the left and right branches.

\begin{figure}
\setlength{\unitlength}{0.033in}
\begin{picture}(60,94)(0,-5)

%\put(65,0){\circle*{0.9}}
\put(3,3){\line(1,0){27}}
\put(28.5,3){\line(0,-1){3}}

\put(1.5,14){\line(1,0){4.5}}
\put(3,3){\line(0,1){11}}
\put(6,14){\circle*{0.9}}

\put(0,27){\line(1,0){3}}
\put(1.5,14){\line(0,1){13}}
\put(0,27){\circle*{0.9}}
\put(3,27){\circle*{0.9}}

\put(30,3){\line(0,1){3}}
\put(9,6){\line(1,0){25.5}}
\put(9,6){\circle*{0.9}}

\put(34.5,6){\line(0,1){5}}
\put(16.5,11){\line(1,0){24}}
\put(16.5,11){\line(0,1){9}}

\put(16.5,20){\line(1,0){4.5}}
\put(21,20){\circle*{0.9}}
\put(16.5,20){\line(-1,0){1.5}}

\put(15,20){\line(0,1){5}}
\put(15,25){\line(-1,0){1.5}}
\put(15,25){\line(1,0){3}}
\put(18,25){\circle*{0.9}}
\put(13.5,25){\circle*{0.9}}

%\put(13.5,25){\line(0,1){15}}
%\put(12,40){\line(1,0){3}}
%\put(12,40){\circle*{0.9}}
%\put(15,40){\circle*{0.9}}

\put(40.5,11){\line(0,1){5}}
\put(40.5,16){\line(1,0){16.5}}
\put(57,16){\circle*{0.9}}
\put(40.5,16){\line(-1,0){1.5}}

\put(39,16){\line(0,1){5}}
\put(39,21){\line(1,0){13.5}}
\put(39,21){\line(-1,0){3}}

\put(36,21){\line(0,1){5}}
\put(36,26){\line(1,0){7.5}}
\put(36,26){\line(-1,0){6}}

\put(30,26){\line(0,1){7}}
\put(30,33){\line(1,0){4.5}}
\put(30,33){\line(-1,0){3}}

\put(27,33){\line(0,1){7}}
\put(27,40){\line(1,0){3}}
\put(27,40){\line(-1,0){1.5}}
\put(30,40){\circle*{0.9}}
%\put(30,40){\circle{1.8}}

\put(25.5,40){\line(0,1){15}}
\put(24,55){\line(1,0){3}}
\multiput(24,55)(3,0){2}{\circle*{0.9}}

\put(34.5,33){\line(0,1){11}}
\put(33,44){\line(1,0){3}}
\multiput(33,44)(3,0){2}{\circle*{0.9}}

\put(43.5,26){\line(0,1){9}}
\put(42,35){\line(1,0){3}}
\put(45,35){\line(0,1){5}}

\put(42,40){\line(1,0){4.5}}
\put(42,40){\circle*{0.9}}
\put(46.5,40){\line(0,1){9}}
\multiput(45,49)(3,0){2}{\circle*{0.9}}
\put(45,49){\line(1,0){3}}

\put(52.5,21){\line(0,1){12}}
\multiput(51,33)(3,0){2}{\circle*{0.9}}
\put(51,33){\line(1,0){3}}

\put(39,35){\circle*{0.9}}
\put(42,35){\line(-1,0){3}}

\put(26,-3){leaf $b$ deleted}

%%%%%%%%%%%%%%%%%%%%%%%%%%%%%%%%%%%%%%%

\put(68,3){\line(1,0){27}}
\put(93.5,3){\line(0,-1){3}}

\put(66.5,14){\line(1,0){4.5}}
\put(68,3){\line(0,1){11}}
\put(71,14){\circle*{0.9}}

\put(65,27){\line(1,0){3}}
\put(66.5,14){\line(0,1){13}}
\put(65,27){\circle*{0.9}}
\put(68,27){\circle*{0.9}}

\put(95,3){\line(0,1){3}}
\put(74,6){\line(1,0){25.5}}
\put(74,6){\circle*{0.9}}

\put(99.5,6){\line(0,1){5}}
\put(81.5,11){\line(1,0){24}}
\put(81.5,11){\line(0,1){9}}

\put(81.5,20){\line(1,0){4.5}}
\put(86,20){\circle*{0.9}}
\put(81.5,20){\line(-1,0){1.5}}

\put(80,20){\line(0,1){5}}
\put(80,25){\line(-1,0){1.5}}
\put(80,25){\line(1,0){3}}
\put(83,25){\circle*{0.9}}

\put(78.5,25){\line(0,1){15}}
\put(77,40){\line(1,0){3}}
\put(77,40){\circle*{0.9}}
\put(80,40){\circle*{0.9}}

\put(105.5,11){\line(0,1){5}}
\put(105.5,16){\line(1,0){16.5}}
\put(122,16){\circle*{0.9}}
\put(105.5,16){\line(-1,0){1.5}}

\put(104,16){\line(0,1){5}}
\put(104,21){\line(1,0){13.5}}
\put(104,21){\line(-1,0){3}}

\put(101,21){\line(0,1){5}}
\put(101,26){\line(1,0){7.5}}
\put(101,26){\line(-1,0){6}}

\put(95,26){\line(0,1){7}}
\put(95,33){\line(1,0){4.5}}
\put(95,33){\line(-1,0){3}}

\put(92,33){\line(0,1){7}}
\put(92,40){\line(1,0){3}}
\put(92,40){\line(-1,0){1.5}}
\put(95,40){\circle*{0.9}}
%\put(95,40){\circle{1.8}}

\put(90.5,40){\line(0,1){15}}
\put(89,55){\line(1,0){3}}
\multiput(89,55)(3,0){2}{\circle*{0.9}}

\put(99.5,33){\line(0,1){11}}
\put(98,44){\line(1,0){3}}
\multiput(98,44)(3,0){2}{\circle*{0.9}}

\put(108.5,26){\line(0,1){9}}
\put(107,35){\line(1,0){3}}
\put(110,35){\line(0,1){5}}

\put(107,40){\line(1,0){4.5}}
\put(107,40){\circle*{0.9}}
\put(111.5,40){\line(0,1){9}}
\multiput(110,49)(3,0){2}{\circle*{0.9}}
\put(110,49){\line(1,0){3}}

\put(117.5,21){\circle*{0.9}}

\put(104,35){\circle*{0.9}}
\put(107,35){\line(-1,0){3}}

\put(91,-3){leaf $c$ deleted}

%%%%%%%%%%%%%%%%%%%%%%%%%%%%%%%%%%%%%%%%%

\put(3,73){\line(1,0){27}}
\put(28.5,73){\line(0,-1){3}}

\put(1.5,84){\line(1,0){4.5}}
\put(3,73){\line(0,1){11}}
\put(6,84){\circle*{0.9}}

\put(0,97){\line(1,0){3}}
\put(1.5,84){\line(0,1){13}}
\put(0,97){\circle*{0.9}}
\put(3,97){\circle*{0.9}}

\put(30,73){\line(0,1){3}}
\put(9,76){\line(1,0){25.5}}
\put(9,76){\circle*{0.9}}

\put(34.5,76){\line(0,1){5}}
\put(16.5,81){\line(1,0){24}}
\put(16.5,81){\line(0,1){9}}

\put(16.5,90){\line(1,0){4.5}}
\put(21,90){\circle*{0.9}}
\put(16.5,90){\line(-1,0){1.5}}

\put(15,90){\line(0,1){5}}
\put(15,95){\line(-1,0){1.5}}
\put(15,95){\line(1,0){3}}
\put(18,95){\circle*{0.9}}

\put(13.5,95){\line(0,1){15}}
\put(12,110){\line(1,0){3}}
\put(12,110){\circle*{0.9}}
\put(15,110){\circle*{0.9}}
\put(11.0,110.9){b}

\put(40.5,81){\line(0,1){5}}
\put(40.5,86){\line(1,0){16.5}}
\put(57,86){\circle*{0.9}}
\put(40.5,86){\line(-1,0){1.5}}

\put(39,86){\line(0,1){5}}
\put(39,91){\line(1,0){13.5}}
\put(39,91){\line(-1,0){3}}

\put(36,91){\line(0,1){5}}
\put(36,96){\line(1,0){7.5}}
\put(36,96){\line(-1,0){6}}

\put(30,96){\line(0,1){7}}
\put(30,103){\line(1,0){4.5}}
\put(30,103){\line(-1,0){3}}

\put(27,103){\line(0,1){7}}
\put(27,110){\line(1,0){3}}
\put(27,110){\line(-1,0){1.5}}
\put(30,110){\circle*{0.9}}
%\put(30,110){\circle{1.8}}
\put(29.2,110.8){a}

\put(25.5,110){\line(0,1){15}}
\put(24,125){\line(1,0){3}}
\multiput(24,125)(3,0){2}{\circle*{0.9}}

\put(34.5,103){\line(0,1){11}}
\put(33,114){\line(1,0){3}}
\multiput(33,114)(3,0){2}{\circle*{0.9}}

\put(43.5,96){\line(0,1){9}}
\put(42,105){\line(1,0){3}}
\put(45,105){\line(0,1){5}}

\put(42,110){\line(1,0){4.5}}
\put(42,110){\circle*{0.9}}
\put(46.5,110){\line(0,1){9}}
\multiput(45,119)(3,0){2}{\circle*{0.9}}
\put(45,119){\line(1,0){3}}

\put(52.5,91){\line(0,1){12}}
\multiput(51,103)(3,0){2}{\circle*{0.9}}
\put(51,103){\line(1,0){3}}
\put(53.5,103.8){c}

\put(39,105){\circle*{0.9}}
\put(42,105){\line(-1,0){3}}

\put(26,67){root}

%%%%%%%%%%%%%%%%%%%%%%%%%%%%%%%%%%%%%%%

%\put(65,0){\circle*{0.9}}
\put(68,73){\line(1,0){27}}
\put(93.5,73){\line(0,-1){3}}

\put(66.5,84){\line(1,0){4.5}}
\put(68,73){\line(0,1){11}}
\put(71,84){\circle*{0.9}}

\put(65,97){\line(1,0){3}}
\put(66.5,84){\line(0,1){13}}
\put(65,97){\circle*{0.9}}
\put(68,97){\circle*{0.9}}

\put(95,73){\line(0,1){3}}
\put(74,76){\line(1,0){25.5}}
\put(74,76){\circle*{0.9}}

\put(99.5,76){\line(0,1){5}}
\put(81.5,81){\line(1,0){24}}
\put(81.5,81){\line(0,1){9}}

\put(81.5,90){\line(1,0){4.5}}
\put(86,90){\circle*{0.9}}
\put(81.5,90){\line(-1,0){1.5}}

\put(80,90){\line(0,1){5}}
\put(80,95){\line(-1,0){1.5}}
\put(80,95){\line(1,0){3}}
\put(83,95){\circle*{0.9}}

\put(78.5,95){\line(0,1){15}}
\put(77,110){\line(1,0){3}}
\put(77,110){\circle*{0.9}}
\put(80,110){\circle*{0.9}}

\put(105.5,81){\line(0,1){5}}
\put(105.5,86){\line(1,0){16.5}}
\put(122,86){\circle*{0.9}}
\put(105.5,86){\line(-1,0){1.5}}

\put(104,86){\line(0,1){5}}
\put(104,91){\line(1,0){13.5}}
\put(104,91){\line(-1,0){3}}

\put(101,91){\line(0,1){5}}
\put(101,96){\line(1,0){7.5}}
\put(101,96){\line(-1,0){6}}

\put(95,96){\line(0,1){7}}
\put(95,103){\line(1,0){4.5}}
\put(95,103){\line(-1,0){3}}

\put(92,103){\line(0,1){7}}
%\put(92,110){\line(1,0){3}}
%\put(92,110){\line(-1,0){1.5}}
%\put(95,110){\circle*{0.9}}
%\put(95,110){\circle{1.8}}

\put(92,110){\line(0,1){15}}
\put(90.5,125){\line(1,0){3}}
\multiput(90.5,125)(3,0){2}{\circle*{0.9}}

\put(99.5,103){\line(0,1){11}}
\put(98,114){\line(1,0){3}}
\multiput(98,114)(3,0){2}{\circle*{0.9}}

\put(108.5,96){\line(0,1){9}}
\put(107,105){\line(1,0){3}}
\put(110,105){\line(0,1){5}}

\put(107,110){\line(1,0){4.5}}
\put(107,110){\circle*{0.9}}
\put(111.5,110){\line(0,1){9}}
\multiput(110,119)(3,0){2}{\circle*{0.9}}
\put(110,119){\line(1,0){3}}

\put(117.5,91){\line(0,1){12}}
\multiput(116,103)(3,0){2}{\circle*{0.9}}
\put(116,103){\line(1,0){3}}

\put(104,105){\circle*{0.9}}
\put(107,105){\line(-1,0){3}}

\put(91,67){leaf $a$ deleted}

 \end{picture}
\caption{The delete and prune operation: effect of deleting leaf $a$ or $b$ or $c$ from the top left tree.
%{\color{blue} Same figure in paper 3.}
}
\label{Fig:dpo}
\end{figure}
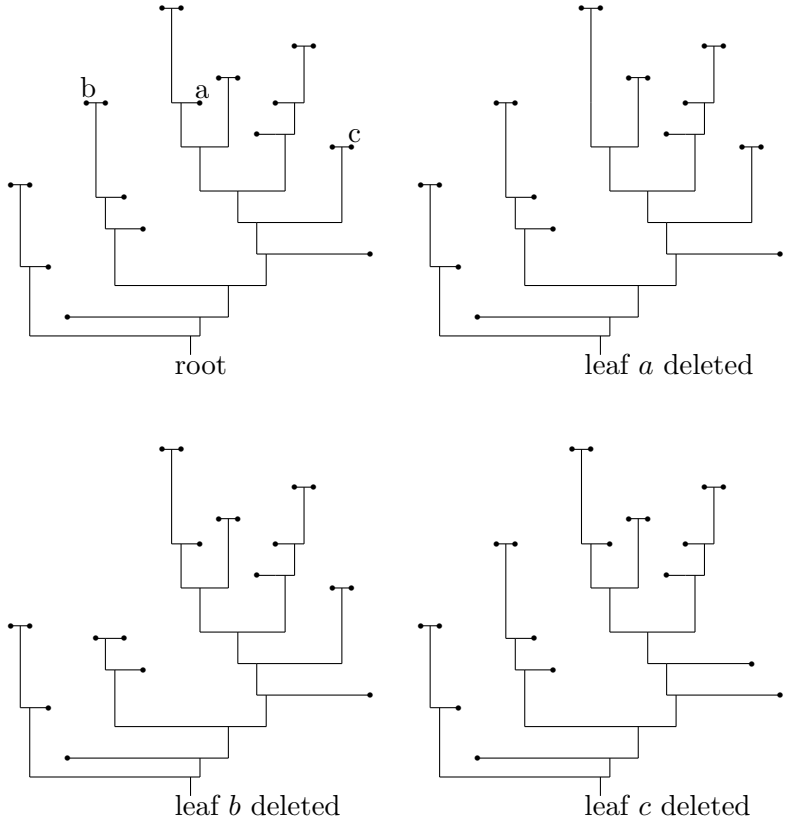

 We can now state the {\em consistency property of CTCS}.

 \begin{Theorem} 
 \label{T:consistent}
 The operation ``delete and prune leaf $n+1$ from $\CTCS(n+1)$" gives a tree distributed as $\CTCS(n)$.
 \end{Theorem} 
 So we can construct an infinite {\em consistent growth process} 
 $(\CTCS(n), n = 1,2,3,\ldots)$ such that, for each $n$,  
 ``delete and prune leaf $n+1$ from a realization of $\CTCS(n+1)$" gives exactly a realization of $\CTCS(n)$.
 In particular, the joint distribution $(\CTCS(n+1), \CTCS(n))$
will determine the associated conditional distribution of $\CTCS(n+1)$ given  $\CTCS(n)$, 
which turns out to be described by an explicit {\em growth algorithm}, stated below.

We know two proofs of Theorem \ref{T:consistent}. A proof via explicit formulas for the distributions, which will immediately provide
the required conditional distributions and growth algorithm, is in \cite[Appendix A]{beta3-arxiv}.
An alternative, more conceptual  proof with fewer calculations, is in \cite[section 2.4]{beta3-arxiv}.
Because $\DTCS(n)$ is embedded in $\CTCS(n)$, we automatically see\footnote{In previous versions it was incorrectly stated that the consistency property does not hold in discrete time.  Sorry.} 
that the consistency property holds in discrete time.  
However, the growth algorithm involves the real edge-lengths, and does not have any simple analog in discrete time.

In the context of {\em growth} of trees, it is more evocative to use the word {\em buds} instead of {\em leaves}, which we use in the following.
In Figure \ref{Fig:dpo} we see {\em side-buds} such as $a$, and end {\em
  bud-pairs} such as $b,c$.

We start with $\CTCS(1)$, which has a single bud at the root.

 \paragraph{The growth algorithm.}
   Given a realization of $\CTCS(n)$ for some $n \ge 1$:
 \begin{itemize}
 \item Pick a uniform random bud; move up the path from the root toward that bud.
 A ``stop" event occurs at rate = 1/(size of sub-clade from current position).
 \item If ``stop" before reaching the target bud, make a side-bud at that point, random on left or right.
 \item Otherwise, extend the target bud into a branch of Exponential(1) length to make a bud-pair.
\end{itemize}
  
\begin{Remark}\label{R:infty}
In our standard representations of the trees, we stop at each leaf.
In what follows, it is sometimes advantageous to consider an {\em extended} representation where
we add a vertical line to infinity from each leaf;
then each leaf lies on a unique path from the root continuing up to $\infty$.
Using that representation, the growth
algorithm has an even simpler description, where the two alternative cases above are
merged into one.
\end{Remark}
 
We note also that the consistency property in Theorem \ref{T:consistent}
implies that the subtree spanned by two random leaves in $\CTCS(n)$
(together with the root) has the same distribution as $\CTCS(2)$;
this gives another proof of  Proposition \ref{P:split}.
  
 \subsection{Exploiting the  growth algorithm}
 \label{sec:exploitgr}
One might expect to be able to exploit this inductive construction to prove asymptotic results, but we have been unable to do so, yet.
One possibility is outlined in Appendix \ref{sec:coincide}. 
Another possibility: the construction is reminiscent of other structures where martingales play a useful role, for instance  urn models \cite{mahmoud}
and branching process and branching random walk  \cite{shi}, so
\begin{OP}
\label{OP:martingale}
Is there a useful martingale associated with the inductive construction?
\end{OP}

\subsection{The exchangeable partitions representation}
\label{sec:paintbox}

%{\color{blue} Copied from previous version of paper 2: very similar to paper
 % 3.}

As mentioned above, a consequence of Theorem \ref{T:consistent}  is that we can construct a canonical {\em consistent growth process} 
$(\CTCS(n), n = 1,2,3,4,\ldots)$ of random trees in which, for each $n$, the realization of $\CTCS(n)$ is precisely the 
realization obtained from $\CTCS(n+1)$ via the ``delete leaf $n+1$ and prune" operation.
Intuitively, there must be some kind of limit object  $\CTCS(\infty) :=\bigcup_{n=1}^\infty\CTCS(n)$.
An insight is provided by Proposition \ref{P:split} 
that, in $\CTCS(n)$, the height of the branchpoint
between two distinct random leaves has exactly Exponential(1) distribution, for each $n \ge 2$.
 As $n \to \infty$ these branchpoints persist, and (by the exchangeability argument for \eqref{d2} below: analogous to the P\'{o}lya urn scheme) the proportion
 of leaves in each branch converges to a random non-zero limit.
 Here, as in Remark \ref{R:infty}, we are imagining the line representing a leaf as continuing up to $\infty$.
 So one could define the limit object  $\CTCS(\infty)$
 as a kind of tree in which the leaves have gone off 
 to $\infty$ and in which there is a unit flow from the root to infinity along the branches.
  However this is not  the usual kind of ``locally finite" infinite tree\footnote{Such as a supercritical Galton-Watson tree.},
 because a realization has a countable infinite dense set of branchpoints.\footnote{This follows from, for instance, Lemma \ref{L:speed} below.}

 Instead of seeking to formalize $\CTCS(\infty)$ as a random tree, 
 we will use an existing formalism via  Kingman's theory of exchangeable partitions.
 A standard reference is  \cite[Section 2.3]{bertoin}  -- see also \cite{Bertoin-PTRF} and  \cite[Chapter 2]{Pitman}.
Applications to tree models somewhat similar to ours, though emphasizing characterizations rather than our explicit calculations, 
have been given in \cite{haas-pitman} (see Section \ref{sec:CRT} for further discussion).
The key feature of this approach is the {\em paintbox theorem}, developed below.

Fix a level (time) $t\ge0$.
Cutting the tree $\CTCS(n)$ at level $t$ yields a partition $\Pi\nnn(t)$ of
$[n]:=\setn$ into the clades at time $t$;
that is, 
$i$ and $j$ are in the same part if and only if
the branchpoint separating the paths to leaves $i$ and $j$ has height $>t$. 
The consistent growth process serves to define
%The partitions $\Pi\nnn(t)$ for different $n$ are obviously compatible
%(using the inductive algorithm and (Svante Theorem 1.3 on inductive construction) %\refT{T2}),
%so they define 
a partition $\Pi(t)$ of $\bbN:=\set{1,2,\dots}$ into  clades at time $t$;
explicitly,
$i$ and $j$ (with $i,j\in\bbN$)
are in the same part if and only if
the branchpoint separating the paths to leaves $i$ and $j$ has height $>t$,
in $\CTCS(n)$ for any $n\ge\max(i,j)$.
In other words, $\Pi(t)$ is the partition of $\bbN$ into
the clades defined by the infinite tree
$\CTCS(\infty)$.

Because each $\CTCS(n)$ is exchangeable,  $\Pi(t)$ is an exchangeable random
partition of $\bbN$, so we can exploit the theory of exchangeable
partitions. 
Denote the clades at time $t$, that is the parts of $\Pi(t)$, by
$\Pi(t)_1,\Pi(t)_2,\dots$, enumerated in order of the least elements.
In particular, the clade of leaf 1 is $\Pi(t)_1$. 
The clades $\Pi(t)_\ell$ are thus subsets of $\bbN$, and the clades of 
$\CTCS(n)$ are the sets $\Pi(t)_\ell\cap[n]$ that are non-empty.

Writing $\left| \ \cdot \ \right|$ for cardinality, it is easy to show 
%(Section \ref{sec:proofviaexch})
\begin{Lemma}
\label{Lpe1a}
A.s., all clades $\Pi(t)_\ell$ are infinite, that is  
$|\Pi(t)_\ell|=\infty$ for every $\ell\ge1$.
\end{Lemma}
Write, for $\ell,n\ge1$,
\begin{align}\label{d1}
  \KX^{(n)}_{t,\ell}:=\bigabs{\Pi(t)_\ell\cap[n]};
\end{align}
the sequence $\KX^{(n)}_{t,1},\KX^{(n)}_{t,2},\dots$ is thus the  sequence of sizes of
the clades in $\CTCS(n)$, extended by 0's to an infinite sequence.
Lemma \ref{Lpe1a} shows that
$\KX^{(n)}_{t,\ell}\to\infty$ as \ntoo{} for every $\ell$.
By Kingman's fundamental result \cite[Theorem 2.1]{bertoin},
the asymptotic proportionate clade sizes,  
that is the limits
\begin{align}\label{d2}
  \XP_{t,\ell}:=\lim_\ntoo\frac{\KX^{(n)}_{t,\ell}}{n},
\end{align}
exist a.s.\ for every $\ell\ge1$.  Then
the random partition $\Pi(t)$ may be constructed (in distribution)
from the limits $(\XP_{t,\ell})_\ell$ by Kingman's paintbox construction, which we state as the
following theorem.
Note that
obviously 
$\XP_{t,\ell}\in\oi$, and $\sum_\ell \XP_{t,\ell}\le1$ (by Fatou's lemma);
part \ref{T:paintbox1} of the theorem below
follows since otherwise a more general version
of the paintbox construction would imply that
$|\Pi(t)_\ell|=1$ for some $\ell$ 
\cite[Proposition 2.8(ii,iii)]{bertoin},
which is ruled out by Lemma \ref{Lpe1a}.

\begin{Theorem}\label{T:paintbox}
  \begin{romenumerate}
  \item \label{T:paintbox1}
A.s.\ each $\XP_{t,\ell}\in(0,1)$, and $\sum_\ell \XP_{t,\ell}=1$.
\item\label{T:paintbox2}     
Given a realization of $(\XP_{t,\ell})_\ell$,
give each integer $i\in \bbN $ a random color $\ell$, 
with probability distribution $(\XP_{t,\ell})_\ell$,
independently for different $i$. These colors define a random partition of
$\bbN$, which has the same distribution as $\Pi(t)$.
  \end{romenumerate}
\end{Theorem}
Note that the paintbox construction in Theorem \ref{T:paintbox} starts with
the limits $\XP_{t,\ell}$, but gives as the result $\Pi(t)$ and thus also 
the partition $\Pi\nnn(t)$ for every finite $n$.

\subsection{The subordinator within $\CTCS(\infty)$}
\label{sec:sub}
%{\color{blue} Edited from paper 3.}

The conclusion of the discussion above is that the intuitive idea of a limit  continuum tree $\CTCS(\infty)$ 
can be formalized as the process $(\Pi(t), t \ge 0)$ of partitions of $\bbN$, in the spirit of
the formalization of  {\em fragmentation processes} in \cite{bertoin}. 
As with the Brownian continuum random tree context
(Section \ref{sec:CRT}) one can study $\CTCS(\infty)$ 
as an object in itself, or as a way to prove $n \to \infty$ limits of aspects of $\CTCS(n)$.

For given $n$ the process
$(  \KX^{(n)}_{t,1} ,  t \ge 0)$ at \eqref{d1} 
is the harmonic descent chain (Section \ref{sec:HD}) $(X^{[n]}_t, t \ge 0)$ 
started at state $n$.
We have several times exploited the approximation \eqref{approx} of this $(  \KX^{(n)}_{t,1} ,  t \ge 0)$ 
in terms of the subordinator 
 $(Y_t, 0 \le t < \infty)$ with  L\'{e}vy measure
  $\psi_\infty$ and corresponding $\sigma$-finite density $f_\infty$ on
  $(0,\infty)$ defined in \eqref{muinf}, which we for convenience repeat:
\begin{equation}
 \psi_\infty[a, \infty) :=  - \log (1 - e^{-a}); 
\quad  f_\infty(a) :=  \sfrac{e^{-a}}{1 - e^{-a}}, \quad
  \ 0 < a < \infty .
  \label{muinf3}
  \end{equation} 
As suggested by \eqref{d2}, this becomes exact in the $n \to \infty$ limit.
\begin{Theorem}[\cite{beta3-arxiv} Theorem 4.5]
\label{T:exact}
Define $Y_t := - \log  \XP_{t,1}$.
Then $(Y_t, 0 \le t < \infty)$ is the subordinator 
given by \eqref{muinf3}.
Moreover, for $t\ge0$ and complex $s$ with $\Re s>-1$,
\begin{align}\label{b5}
\Ex [\cXt^s] =\Ex [ e^{-s Y_t}]  
=e^{-t(\psi(s+1)-\psi(1))}
\end{align}
where $\psi(z):=\gG'(z)/\gG(z)$ 
is the digamma function. % and $\gamma$ is Euler's gamma. 
\end{Theorem}
This is proved in \cite{beta3-arxiv} by calculating moments.

Regarding $\CTCS(\infty)$ as a tree, the process $(\XP_{t,1}, t \ge 0)$ is the proportionate size of the subclade at time $t$, as one moves a speed $1$
down the path to a uniform random leaf on the infinite boundary.

\subsubsection{The subordinator and $\CTCS(n)$ for finite $n$}\label{sec:subn}
The subordinator above leads also to the following description of the
subclades along the path
to a given leaf, say leaf 1, in a finite tree $\CTCS(n)$, 
and thus of the HD chain and the hop-height $L_n$, and also of the height $D_n$.
(This is implicit in \cite{iksanovHD,iksanovCLT}, which has inspired the
description below.)
If we condition on the process $(P_{t,1}, t\ge0)$, then, by
the paintbox construction, any other leaf $j$, belongs to the same
clade as 1 at time $t$ with probability $P_{t,1}$.  
Define 
\begin{align}
T_{[1,j]}:=\inf\set{t:\text{1 and $j$ are in different parts in }\Pi(t)},
\qquad j\ge2,
\end{align}
 i.e., the time that the paths to 1 and $j$ in a finite tree $\CTCS(n)$
 (with $n\ge j)$ diverge.
Then it follows that conditioned on $(P_{t,1},t\ge0)$,
\begin{align}\label{tp1}
  \Pr(T_{[1,j]} >t)= P_{t,1},
\end{align}
so the conditional distribution function of $T_{[1,j]}$ is $1-P_{t,1}$.
Furthermore, still conditioned on $(P_{t,1},t\ge0)$, the variables $T_{[1,j]}$,
$j\ge2$, are i.i.d.
Hence, there exist random Uniform$(0,1)$ variables $U_j$,
 independent of $(P_{t,1}, t\ge0)$ and of each other,
such that
\begin{align}\label{tp2}
  T_{[1,j]}:=\inf\set{t:P_{t,1}\le U_j},
\qquad j\ge2.
\end{align}
The closed range of the subordinator $(Y_t,t\ge0)$ has a.s.\
Lebesgue measure 0, and thus
the same holds for the closed range of $P_{t,1}=e^{-Y_t}$.
The complement of the closed range is  the union of 
an infinite set of disjoint open intervals which we call gaps;
thus a.s.\ every $U_j$ falls in one of the gaps.
It follows from \eqref{tp2} that  $U_i$ and $U_j$
fall in the same gap if and only if $T_{[1,i]}=T_{[1,j]}$ and thus $i$ and $j$ leave the
path to 1 at the same time. In other words, for any finite $n$,
the branchpoints on the path to leaf 1 in $\CTCS(n)$
correspond to the gaps in the closed range of
$P_{t,1}$ that are occupied by at least one of $U_2,\dots,U_{n}$.
In particular, $L_n$, the hop-height of leaf 1, equals the number of
occupied gaps, and $D_n$, the height of leaf 1,
equals the time that $P_{t,1}$ jumps across the leftmost occupied gap.
Furthermore, for $k=1,\dots,L_n$,
the size of the $k$th clade containing leaf 1 equals 
1 + the total number of points $U_2,\dots,U_j$ in the leftmost $L_n+1-k$ gaps.

Since $P_{t,1}=e^{-Y_t}$,
this description of $L_n$, $D_n$, and the HD chain 
can  by a change of variables
be given in an equivalent form with i.i.d.\
Exponential(1) points $E_2,\dots,E_n$
thrown into the gaps of the closed range of the
subordinator $(Y_t,t\ge0)$.
This description
was found by
Iksanov  \cite{iksanovHD,iksanovCLT} in a different way,
using results from the theory of regenerative compositions,
and used by him to give proofs of 
Theorem \ref{T:alimit} and the CLT parts of Theorems
\ref{TNorm} and \ref{Lnorm}.

\subsection{Roots of subtrees}
\label{steel}

The following remarkable result was discovered in \cite[Theorem 1(i) and Theorem 2(ii)]{steel2025}.

\begin{Proposition}
\label{P:roots}
For $2 \le k < n$, consider $\DTCS(k)$ as the subtree of the tree $\DTCS(n)$ induced by $k$ random leaves.
Let $D_{n,k}$ be the discrete height, within $\DTCS(n)$, of the first split of $\DTCS(k)$.
So the event $\{D_{n,k} = 0\}$ is the event that the first split in the subtree occurs at the first split of the tree.
Then\\
(i) $\Pr(D_{n,k} = 0) = h_{k-1}/h_{n-1}$.\\
(ii) $ \frac{h_{k-1} D_{n,k}}{\log n} \to_d \Exp(1) \mbox{ as $n \to \infty$ for fixed $k \ge 2$.}$
\end{Proposition}
\begin{proof}
Assertion (i) is conceptually straightforward: 
by conditioning on the first split of the tree
\[ \Pr(D_{n,k} = 0) = \sum_{i=1}^{n-1} q(n,i) (1 - b(i,k) - b(n-i,k))
\]
for $b(i,k) = \frac{i(i-1) \cdot \cdot (i-k+1)}{n(n-1) \cdot \cdot (i-n+1)}$.
But it is not so simple to evaluate the sum. 
To illustrate use of the structures described above,
we give a quick proof based on the process $(P_{t,1}, t \ge 0)$.

For both cases we consider the discrete trees as embedded in $\CTCS(\infty)$ as the induced subtrees on leaves $1,\ldots,k$ or $1,\ldots,n$.
In $\CTCS(n)$, the initial rate of splits, until the first split occurs, is
$h_{n-1}$. When the first split occurs, it will with probability
$p:=\Pr(D_{n,k}=0)$ split the subset
$\set{1,\dots,k}$ of vertices and thus be the first split in $\CTCS(k)$.
Hence, the initial rate of splits in $\CTCS(k)$ is $ph_{n-1}$.
But this rate is $h_{k-1}$, and thus $ph_{n-1}=h_{k-1}$.

For (ii), consider the path from the root to leaf 1
in $\CTCS(n)$,
and let $B_{n,t}$ be the number of branchpoints (i.e.\ splits) that have been
passed at time $t$.
(Thus $B_{n,t}$ is the discrete height, in $\DTCS(n)$ of the last branchpoint
before or at time $t$.)
Recall that the height $H_k$  of the first branchpoint in $\CTCS(k)$ has $\Exp(h_{k-1})$ distribution (by the continuising construction).
Now
\[ D_{n,k} = B_{n,H_k} \]
and so assertion (ii) follows using Lemma \ref{L:speed} below.
\end{proof}

\begin{Lemma}
\label{L:speed}
For any fixed $t_0$, as \ntoo,
\begin{align}
  \sup_{t\le t_0}\lrabs{\frac{B_{n ,t}}{\log n}-t}\pto0.
\end{align}
[In other words, $B_{n,t}/\log n \pto t$ in the space $D[0,\infty)$.]
\end{Lemma}
\begin{proof}
Note that if $N\nn_t$ is a Poisson counting process 
on $[0,\infty)$ with constant rate $\gl_n$,
for some sequence $\gl_n\to\infty$, then 
$N\nn_t\eqd N_{\gl_n t}$ (as processes), 
where $N_t $ is a rate 1 Poisson counting process,
and since $N_t/t\asto 1$ as $\ttoo$ by the law of large numbers, it follows
easily that
for every fixed $t_0>0$, 
\begin{align}\label{slmath}
\sup_{t\le t_0}|N\nn_t/\gl_n-t|\pto0.  
\end{align}
The jumps in $B_{n ,t}$ do not occur at a fixed rate, but for time $t\in[0,t_0]$,
the rate is between $h_{n-1}=\log n + O(1)$ and $h_{M(n,t_0) -1}$, where $M(n,t_0)  := X^{(n)}_{t_0}$ is the
size of the 
clade at $t_0$. 
Furthermore, by \eqref{d2}, $M(n,t_0) /n\asto P_{t_0,1}$, 
and since $P_{t_0,1}>0$ a.s., it follows
that $\log M(n,t_0)  / \log n \asto 1$ and thus $h_{M(n,t_0) -1}/\log n\asto 1$.
Hence the result follows by conditioning on $P_{t_0,1}$ and 
sandwiching $B_{n,t}$ between two Poisson counting
processes with rates $\log n$ and  $(1-\eps)\log n$ for a fixed $\eps>0$,
and then letting $\eps\to0$.
 \end{proof}

\begin{Remark}
This argument also shows that the branchpoints in $\CTCS(\infty)$ are dense, as mentioned earlier.
\end{Remark}

\subsection{Proving Theorem \ref{T:alimit} via study of $\CTCS(\infty)$}
\label{sec:surprise}

%{\color{blue} Copied from previous version of paper 2: very similar to paper 4.}

Having the exchangeable formalization of $\CTCS(\infty)$ does not help with our first foundational result (the CLT for leaf-heights: Theorem \ref{TNorm}),
but (somewhat surprisingly) it does lead to an alternate proof of the second (the occupation measure: Theorem \ref{T:alimit}).
This is surprising because convergence of $\CTCS(n)$ to  $\CTCS(\infty)$ seems a kind of ``global" convergence, whereas 
 the asymptotic fringe is a ``local" limit.
The central idea of the proof is to define an infinite measure $\gU$ on $\oi$ by
\begin{align}\label{e7}
  \gU :=\intoo \cL(\XP_{t,1})\dd t.
\end{align}
 Formula \eqref{b5} tells us the moments of the measure $\gU$: 
\begin{align}
\label{m1}
%\label{e11}
  \intoi x^{s-1}\dd\gU(x)
=\intoo \Ex \XP_{t,1}^{s-1}
= \frac{1}{\psi(s)-\psi(1)},
\qquad \Re s>1.
\end{align}
So this is the Mellin transform of $\gU$.
We do not know how to invert the transform to obtain an explicit formula for $\gU$, but what is relevant to us here is the behavior of $\gU$ near $0$, as follows.
\begin{Lemma}[\cite{beta3-arxiv} Lemma 6.1 and \cite{beta4-arxiv} Lemma 6.1]
\label{LM}
  Let $\gU$ be the infinite
measure on $\oio$ having the Mellin transform
\eqref{m1}.
Then $\gU$ is absolutely continuous, with a continuous density $f(x)$ on $(0,1)$
that satisfies
\begin{align}\label{lm}
f(x)
%= \frac{6}{\pi^2x}+O\bigpar{x^{-s_1}\bigpar{1+|\log x|\qw}},
= \frac{6}{\pi^2x}+O\bigpar{x^{-s_1}+x^{-s_1}|\log x|\qw},
\end{align}
uniformly for $x\in(0,1)$, 
where $s_1\doteq -0.567$ is the largest negative root of $\psi(s)=\psi(1)$.
In particular, 
for $x\in(0,\frac12)$ say,
\begin{align}\label{lmo}
f(x)
= \frac{6}{\pi^2x}+O\bigpar{x^{-s_1}} \mbox{ as } x \downarrow 0
.\end{align}
\end{Lemma}
Using the consistent sampling property of $\CTCS(n)$ and size-biasing, one
can derive%
%\footnote{See \cite[Section 6]{beta3-arxiv}  and \cite[Section 5]{beta4-arxiv} for details of the following.}
\footnote{The details are in {\cite[Section 6]{beta3-arxiv}} except for
  this formula for $a(n,j)$ which is only implicit in
\cite{beta3-arxiv} but stated in \cite[Section 5]{beta4-arxiv} (which refers to
\cite{beta3-arxiv} for proof).}
an exact formula: for  $j \ge 2$
\[
a(n,j) = \sfrac{j}{n} h_{j-1} {n \choose j} 
\int_0^1   x^{j-1}(1-x)^{n-j}  d\gU(x) .
\]
 Combining with \eqref{lmo}, some calculus is sufficient  to prove $a(n,j) \to a(j)$.

 \section{Further aspects and open problems}
 \label{sec:misc}
 
 %{\color{blue} Material through Section \ref{sec:powers} only slightly edited from previous version: not in other papers.}
 
There is an extensive literature (see e.g.\ \cite{drmota,janson21,legall,lyons}) 
on many different aspects of many different models of random trees.
In addition to the specific Open Problems mentioned already,
there are many further aspects of our model that could be studied.
We outline a few in this section.

\subsection{Inspiration from the drawn cladogram representation}
\label{sec:drawn}
A perhaps novel aspect of random trees arises from considering how cladograms are actually drawn on paper,
as illustrated in Figure \ref{Fig:cladogram}.
%a typical convention is that the larger subclade is (mostly) drawn on the right side of the split.
In the familiar models of random trees starting with the Galton-Watson tree, it is natural to study
the {\em width profile process}, the number of vertices at each height from the root  \cite{drmota97}.
In contrast a cladogram is drawn with all the leaves at the same ``level zero".  
So one could measure ``height" with reference to that level, but this depends on precisely how one draws
the cladogram.

There is in fact a convention implicit in Figure  \ref{Fig:cladogram}.
Each clade-split $\chi \to (\chi_1,\chi_2)$,
is represented by a horizontal line at some {\em draw-height} $\dh(\chi)$.
The draw-height depends on the shape of the subtree at $\chi$, not merely on its size $|\chi|$. 
For the usual convention, 
setting $\dh(\chi) = 0$ for a leaf (where $|\chi| = 1$), we define $\dh$
inductively%
\footnote{The ``maximum" in this rule is somewhat reminiscent of the classical 
{\em Horton–Strahler} statistic \cite{devroye21} in river networks, though we do not see any precise relation.  See \cite{horton1} for a recent connection 
with the Brownian CRT  in the context of asymptotics of uniform binary trees.}
for $|\chi| >1$:%
\begin{equation} 
\chi \to (\chi_1,\chi_2): \quad \dh(\chi) = 1 + \max(\dh(\chi_1), \dh(\chi_2))   . 
\label{htrec}
\end{equation}   
In particular, if $|\chi| = 2$ then $\dh(\chi) = 1$, and
if $|\chi| = 3$ then $ \dh(\chi) = 2$, but for larger clades,
$\dh(\chi)$ is not determined by the size:
a clade of size $4$ may have draw-height = 2 or 3, 
and a clade of size $8$ may have draw-height = 3 or 4 or 5 or 6 or 7.

The description above is clearly the minimal way to draw a cladogram such that each vertical edge length is a natural number.
It is easy to see that  the draw-height $\dh(\bt)$ of a finite clade tree $\bt$, that is the draw-height of the root split,
equals the height of $\bt$ in its discrete  representation, 
that is the largest number of edges in the  path from the root to a leaf. 
(Indeed the recursion for tree height is exactly \eqref{htrec}.)
For a leaf at this maximal height, the draw-heights upwards from the leaf 
take successive integer values $0, 1, 2, \ldots,  \dh(\bt)$.
For a leaf at lesser height, the difference of its height from the maximal height equals the number of missing
integers in the draw-heights along the path from that leaf.

\subsubsection{Heuristics: Drawn length and drawn width}
\label{sec:drawn2}

This conventional way of drawing a cladogram suggests other questions that apparently have not been studied.
One feature of interest is the {\em drawn length} $\dl(\chi)$ of the cladogram representation $\chi$ of a tree,
that is the sum of lengths of the vertical lines in the cladogram.
This satisfies a recursion: for a split $\chi_m \to (\chi_i,\chi_{m-i})$,
\begin{equation}
 \dl(\chi_m) = \dl(\chi_i) + \dl(\chi_{m-i}) + 2 + |\dh(\chi_{m-i}) - \dh(\chi_{i})| .
\label{rec:dl}
\end{equation}
What is the drawn length $\dl(\DTCS(n))$ in our model?

Here is a heuristic analysis of  the expectation $\overline{L}_n := \Ex [\dl(\DTCS(n))]$.  
Write  
%\SJm{The notation might be confusing since in Open Problem \ref{OPmax2},
%we use  $L^*_n$ for $\dh(\DTCS(n))$; here its expectation is 
%$\overline{H}_n$ while $\overlain{L}_n$ is something different.
%But I have no alternative suggestion.}
\begin{align}
\overline{H}_n := \Ex [\dh(\DTCS(n))] \sim c \log^2 n
\end{align}
as in Open Problem \ref{OPmax2}, because $\overline{H}_n$ here is a re-naming of $\Ex[L^*_n]$ there.
In our model the increment \eqref{rec:dl} is dominated by the contribution from uneven splits, so for $i < m/2$ we 
 approximate the last term in  \eqref{rec:dl} as $ |\dh(\chi_{m-i}) - \dh(\chi_{i})| \approx  \dh(\chi_{m-i}) - \dh(\chi_{i})$.
Taking expectations and using \eqref{rec:dl} recursively leads roughly to
\begin{align}
  n^{-1} \overline{L}_n 
&\approx 2 \sum_{m=2}^n a(n,m)  
 \sum_{i=1}^{m/2} q(m,i) (2 + \overline{H}_{m-i} - \overline{H}_i  )
\notag\\&
 \approx 4c \sum_{m=2}^n m^{-1} \log m \ 
  \sum_{i=1}^{m/2} q(m,i) (2 + c (\log^2 (m-i) - \log^2 i))
\notag\\&
  \approx 2c \sum_{m=2}^n   \sum_{i=1}^{m/2}  \sfrac{1}{i(m-i)} (2 + c (\log^2 (m-i) - \log^2 i))
\notag\\&
\approx c^* \sum_{m=2}^n   m^{-1} \log^3 m
.\end{align}
This leads to
\begin{OP}
\label{OP:overL}
Prove that $\overline{L}_n$ grows roughly like $n \log^4 n$.
\end{OP}
In more detail, one could consider the analog of the width process mentioned earlier, illustrated in Figure \ref{Fig:2d}.
\begin{OP}
\label{OP:drawn_width}
What can we say about the {\bf drawn width profile process} $(W(h), h \ge 0)$ for $\DTCS(n)$, for the number $W(h)$ of vertical lines that cross an interval $(h,h+1)$, 
that is  the number of clades with height $\le h$ that arise as a split of a clade with height $\ge h+1$?
\end{OP}

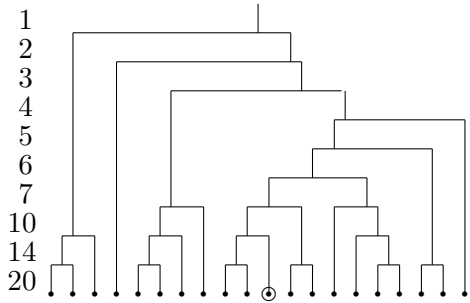
\begin{figure}[ht]
\setlength{\unitlength}{0.038in}
\begin{picture}(60,50)(-40,-5)
\multiput(0,0)(3,0){20}{\circle*{0.9}}
\put(30,0){\circle{1.8}}
\put(30,0){\line(0,1){8}}
\put(24,0){\line(0,1){4}}
\put(27,0){\line(0,1){4}}
\put(24,4){\line(1,0){3}}
\put(25.5,4){\line(0,1){4}}
\put(25.5,8){\line(1,0){4.5}}
\put(27,8){\line(0,1){4}}

\put(33,0){\line(0,1){4}}
\put(36,0){\line(0,1){4}}
\put(33,4){\line(1,0){3}}
\put(34.5,4){\line(0,1){8}}

\put(27,12){\line(1,0){7.5}}

\put(39,0){\line(0,1){12}}
\put(42,0){\line(0,1){8}}
\put(45,0){\line(0,1){4}}
\put(48,0){\line(0,1){4}}
\put(45,4){\line(1,0){3}}
\put(46.5,4){\line(0,1){4}}
\put(42,8){\line(1,0){4.5}}
\put(45,8){\line(0,1){4}}
\put(39,12){\line(1,0){6}}
\put(30,12){\line(0,1){4}}
\put(43.5,12){\line(0,1){4}}
\put(30,16){\line(1,0){13.5}}

\put(51,0){\line(0,1){4}}
\put(54,0){\line(0,1){4}}
\put(51,4){\line(1,0){3}}
\put(52.5,4){\line(0,1){16}}

\put(36,16){\line(0,1){4}}
\put(36,20){\line(1,0){16.5}}

\put(57,0){\line(0,1){24}}
\put(39,20){\line(0,1){4}}
\put(39,24){\line(1,0){18}}
\put(40.5,24){\line(0,1){4}}

\put(18,0){\line(0,1){8}}
\put(12,0){\line(0,1){4}}
\put(15,0){\line(0,1){4}}
\put(12,4){\line(1,0){3}}
\put(13.5,4){\line(0,1){4}}
\put(13.5,8){\line(1,0){4.5}}
\put(15,8){\line(0,1){4}}
\put(21,0){\line(0,1){12}}
\put(15,12){\line(1,0){6}}
\put(16.5,12){\line(0,1){16}}
\put(16.5,28){\line(1,0){23.5}}

\put(9,0){\line(0,1){32}}
\put(34.5,28){\line(0,1){4}}
\put(9,32){\line(1,0){25.5}}
\put(33,32){\line(0,1){4}}

\put(6,0){\line(0,1){8}}
\put(0,0){\line(0,1){4}}
\put(3,0){\line(0,1){4}}
\put(0,4){\line(1,0){3}}
\put(1.5,4){\line(0,1){4}}
\put(1.5,8){\line(1,0){4.5}}
\put(3,8){\line(0,1){28}}
\put(3,36){\line(1,0){30}}
\put(28.5,36){\line(0,1){4}}

\put(-6,0.5){20}
\put(-6,4.5){14}
\put(-6,8.5){10}
\put(-4.5,12.5){7}
\put(-4.5,16.5){6}
\put(-4.5,20.5){5}
\put(-4.5,24.5){4}
\put(-4.5,28.5){3}
\put(-4.5,32.5){2}
\put(-4.5,36.5){1}

  \end{picture}
\caption{Drawn width profile for the cladogram in Figure \ref{Fig:2}.}
\label{Fig:2d}
\end{figure}

\subsection{Powers of subtree sizes}
\label{sec:powers}
Another aspect of random trees that has been studied in other models
(for instance in \cite{fill-janson,SJ371}
for the model of conditioned Galton–Watson trees conditioned on total size $n$)
is the sum of $p$-powers of subtree sizes. 
Our work provides some results and conjectures for that quantity $\SS_n^{(p)}$ in our model,
that is 
\[  \SS_n^{(p)} := \sum_{j=2}^n N_n(j) j^p     \]
where $N_n(j)$ is the number of size-$j$ clades that ever arise in our model.
By \eqref{jun12} and Theorem \ref{T:alimit} we have 
\[ 
\Ex[ N_n(j)] 
= n a(n,j)/j
\sim n a(j)/j .
\]
So for $- \infty < p < 1$ we expect that
\begin{equation}  
n^{-1} \Ex[\SS_n^{(p)}]  
= \sum_{j \ge 2} a(n,j) j^{p-1}
\to \sum_{j \ge 2} a(j) j^{p-1}  < \infty.
\label{power1}
\end{equation}
For $p = 1$ we have the identity, conditioning on the random tree $\tree_n$,
\[ \SS_n^{(1)} = n \cdot \Ex [L_n \mid \tree_n] \]
and so by Theorem \ref{TL1}
\begin{equation}
\Ex[\SS_n^{(1)}]     \sim \tfrac{n}{2\zeta(2)}  \log^2 n 
\label{power2}
\end{equation}
as observed\footnote{So $n^{-1} \Ex[\SS_n^{(1)}]$ is the average size of a random subtree; it is noteworthy that, in any tree model with a fringe distribution limit, 
this average size $\to \infty$ as $n \to \infty$ \cite{me-fringe}.}
 in  \cite[Theorem 2.16]{beta1}.
For $p = 2$ we are dealing with the discrete time analog ($Q^{hop}_n(t), t = 0,1,2,\ldots)$
of the sum of squares of clade sizes in Section \ref{sec:Exp1}.
Instead of the exact formulas there, we have an approximation
\[
\Ex [Q_n(t) - Q_n(t+1) \vert \FF_t] \approx Q_n(t)/\log n, \quad t = O(\log n)
\]
leading to
\[
\Ex [Q_n(t)] \approx n^2 \exp(- t/\log n),  \quad t = O(\log n) .
\]
So heuristically
\begin{equation}   
\Ex[\SS_n^{(2)}] \approx \sum_t (\Ex [Q_n(t)] - n ) \approx n^2 \log n .
\label{power3}
\end{equation}
In fact \cite{fill-janson,SJ371} study also complex powers $\alpha$, so we note
\begin{OP}
\label{OP:SS}
Give a detailed analysis of $\SS_n^{(\alpha)}$ in our model.
\end{OP}

 \subsection{Analogies with and differences from the Brownian CRT}
 \label{sec:CRT}
 
 %{\color{blue} Copy of previous version; a shorter edited version is in Paper 3 sec 7.2.}
 
 The best known continuous limit of finite random tree models is the Brownian 
continuum random tree (CRT) \cite{crt2,crt3,evans,goldschmidt2}, 
which is a scaling limit of conditioned Galton-Watson trees and other ``uniform random tree" models.
How does that compare with our $\CTCS(\infty)$ model?
 
 \noindent (a)  The most convenient formalization of the Brownian CRT is as a random {\em measured metric space}, with the
Gromov-Hausdorff-Prokhorov topology \cite{ADH} on the set of all such spaces.  So one automatically has a notion of convergence in distribution.
Our formalization of $\CTCS(\infty)$ via exchangeable partitions is less amenable to rephrasing as a random element of some metric space.
 
 \noindent  (b) Our consistency  result, that $\CTCS(n)$ is consistent as $n$ increases, and exchangeable over the random leaves, 
constitutes one general approach to the construction of continuum random trees (CRTs) \cite{crt3,evans}.

 \noindent (c) Our explicit inductive construction is analogous to the line-breaking construction of the Brownian CRT \cite{crt2} and stable trees \cite{linebreak}.

 \noindent (d)  Haas et al \cite{haas-pitman} and subsequent work such as \cite{haas-miermont}
have given a detailed general treatment of self-similar fragmentations via  exchangeable partitions, though the focus there
is on characterizations and on models like the 
$-2 < \beta < -1$ case of the beta-splitting model \eqref{rule-0}.
On the range $-2 < \beta < -1$ , such models have limits which are qualitatively analogous to the Brownian continuum random tree, 
which is the case $\beta = -2$.
But how this general abstract theory applies to explicit quantitative aspects of our specific $\beta = -1$ tree model seems a little hard to extract.

 \noindent (e) We do not know if there is any relation between  $\CTCS(\infty)$ and the {\em stable trees} whose construction is studied in \cite{stable1,linebreak}, or between the class of self-similar trees studied in \cite{selfsimilarmarkov}.

%However some qualitative properties of $\tree$ are rather different from those of the  Brownian CRT, as originally outlined in \cite{crt2}.
%The tree $\tree$ is a scaling limit of $\CTCS(n)$ as seen from the root, analogous to the Brownian CRT as a scaling limit of conditioned Galton-Watson trees.
 \noindent (f) The Brownian CRT has a certain ``local and global limits are consistent" property, as follows.
That CRT is the scaling limit of certain discrete random tree models, and is encoded by Brownian excursion, 
and the local weak limit of those discrete models  is a discrete infinite tree encoded by random walk-like processes.
However these two limit processes are consistent in the following sense: the local behavior of the CRT around a typical point is another continuum tree encoded by 
the two-sided Bessel(3) process  on $\Reals$, and this process is also the scaling limit of the discrete infinite tree arising as the local weak limit.
In our CTCS model, the relationship between $\CTCS(\infty)$ as a scaling limit, and the fringe distribution as a local limit, is rather harder to describe
(cf.\ the Section \ref{sec:surprise} comment that one can derive the latter from the former).
It is intuitively clear that there is a
scaling limit of the discrete fringe process itself, the limit being
representable as a point process of branchpoint positions.
\begin{OP}
\label{OP:rescale_fringe}
Study that rescaled process.
\end{OP}

%In out context, it is not at all clear whether our two limit processes -- 
%the continuum fragmentation process and the discrete fringe process -- 
%can be viewed as consistent in an analogous way.
%Because there is a substantial literature on coding trees as walks, a starting point is to ask
%\begin{OP}
%\label{OP:consist2}
%Can one code the fringe process as a discrete stationary process $(V_n, n \in \Ints)$ whose scaling limit is an identifiable process $(\bar{V}_t, t \in \Reals)$?
%\end{OP}
%In view of \eqref{cauchy1}, the scaling limit should be something like a reflecting Cauchy process.

 \noindent (g) It is implausible that $\CTCS(\infty)$ is as ``universal" a limit as the Brownian CRT has proved to be, but nevertheless one can ask
\begin{OP}
\label{OP:universal}
Are there superficially different discrete models whose limit is the same $\CTCS(\infty)$?
\end{OP}
The key feature of our model seems to be subordinator approximation \eqref{approx}: can this arise in some other discrete tree model?

\subsection{Relations to the $beta(2,1)$ coalescent}
\label{sec:coalescent}
%{\color{blue} \SJm{NEW. Should we say suggested by referee?}
There is in fact\footnote{Thanks to Jim Pitman and an anonymous referee for pointing out this relation.} another closely related continuous model, the $beta(2,1)$
coalescent \cite{pitman1999}. 
In that model, as in $\CTCS(\infty)$, we have for every $t\ge0$ an
exchangeable random partition of $\bbN$, but 
the process 'runs in the opposite direction' and
the partitions get coarser as time increases, with blocks merging.
The $beta(2,1)$ coalescent is defined by a particular rate 
for merging of different blocks.
We see two connections to $\CTCS(\infty)$, indicating a close relation
in spite of the fact that $\CTCS(\infty)$ is a fragmentation process
while the $beta(2,1)$ coalescent is a coalescent process.

First, \cite[Proposition 26]{pitman1999} shows that
in the beta(2,1) coalescent, the frequency of singletons at time $t$
is $e^{-Y_t}$ (as a process in $t\ge0$), 
where $Y_t$ is the same subordinator as in 
Section \ref{sec:sub}; hence
by Theorem \ref{T:exact},
this frequency equals (in distribution)
$P_{t,1}$, which is the frequency of integers that
belong to the same clade in $\CTCS(\infty)$ as leaf 1.
Note, however, that this exact correspondence does not carry over to finite
$n$: The singletons at time $t$ in the beta(2,1) coalescent restricted to $[n]$
are not just the elements of $[n]$ that are singletons in the entire
infinite partition, but also any other elements of $[n]$ that lie in blocks
with no other element in common with $[n]$;
on the other hand, as we have seen in 
Section \ref{sec:consistent-exch}, the clade of leaf 1 in $\CTCS(n)$
at time $t$ is exactly the intersection of $[n]$ and the corresponding
clade in $\CTCS(\infty)$.

Secondly, 
the number of collisions in that coalescent started with $n$ singletons 
obeys the same CLT  \cite{iksanov2009} 
as does our $L_n$ in \cite[Theorem 1.7]{beta1}.
Again, this relation is not exact for finite $n$;
as discussed in \cite[Remark 1.2]{iksanov2009},
the asymptotics of the first and second moments of
this number of collisions (there denoted $X_n$) 
has the same leading term as the variable $Y_n$ there, 
which as discussed in Section \ref{sec:subn}
has the same distribution as $L_n$,
but the second-order terms differ.

This connection clearly warrants further study.

\begin{OP}
\label{OP:coalesce}
Elucidate the precise connections between $\CTCS(\infty)$ and the $beta(2,1)$ coalescent.
\end{OP}

\subsection{Methodology comments}
\label{sec:method}
We have described  methods used in this project, so what about methods not used?

\smallskip \noindent
(i) Stochastic calculus is used only briefly 
(Section  \ref{sec:bdheight} and
Appendices  \ref{sec:Exp1} and  \ref{sec:Proof1}), and only in elementary ways.
And as stated in Open Problem \ref{OP:martingale},
is there a useful martingale associated with the inductive construction?

\smallskip \noindent
(ii) We believe there should be some ``soft" proof of consistency and the exchangeable representation based on the fact
 \cite{me_clad} that the distribution $q(n,\cdot)$ arises via the first split of $n$ i.i.d.\ Uniform$(0,1)$
 points when the interval is split according to the (improper) density $1/(x(1-x))$.
But we have been unable to produce a satisfactory argument along those lines.  
See \cite{hollering} for a recent discussion of consistency for random tree models.

\paragraph{Acknowledgments.}
Thanks to Boris Pittel for extensive interactions regarding this project.
Thanks to Serte Donderwinkel for pointing out a gap in an early version,
 and to Jim Pitman and David Clancy and Prabhanka Deka for helpful comments on  early versions.
For recent (May 2024) alternative proofs mentioned in the text we thank  Brett Kolesnik,   Luca Pratelli and Pietro Rigo,
and in particular Alexander Iksanov, whose observation of the connection with  regenerative composition structures 
may lead to interesting further results.
 Thanks especially to B\'{e}n\'{e}dicte Haas for her careful explanation of how our setting fits into the general theory of exchangeable random partitions,
 which is the basis of our Section \ref{sec:paintbox}.
%\SJm{Thank referee for pointing out connections to beta(2,1)-coalescent, Section \ref{sec:coalescent}?}

\begin{appendices}

%Appendix: Alternative proofs

%\section{Alternative proofs}
% Because one of our themes is to compare different methodologies, here we give some alternate proofs not published elsewhere.

 \section{Stochastic analysis proof of branchpoint height}
 \label{sec:Exp1}
 
 %{\color{blue} Copied from earlier version.}
 
 Here is a direct proof of Proposition \ref{P:split}.
 In $\CTCS(n)$ write 
$(X_n(i,t), i \ge 1)$ for the clade sizes at time $t$ and consider
\[
Q_n(t) = \sum_i X_n^2(i,t).
\]
Note that, when a size-$m$ clade is split, the effect on sum-of-squares of clade sizes has expectation
\begin{equation}
\sum_{i=1}^{m-1}  (m^2 - i^2 - (m-i)^2) \ q(m,i) 
= \frac{m}{2h_{m-1}}  \sum_{i=1}^{m-1}  2 = \frac{m(m-1)}{h_{m-1}}  .
\label{id1}
\end{equation}
If we chose some arbitrary rates $r(m,n)$ for splitting a size-$m$ clade, then
\[ \Ex [Q_n(t) - Q_n(t+dt) \vert \FF_t] = \sum_i r(X_n(i,t),n) \  \frac{X_n(i,t) (X_n(i,t)-1)}{h_{X_n(i,t)-1}}  \ dt .
\]
So by choosing $r(m,n) = h_{m-1}$ we obtain
\[ 
\Ex [Q_n(t) - Q_n(t+dt) \vert \FF_t] = (Q_n(t) - n) \ dt . 
\]
Because $Q_n(0) = n^2$ we obtain the exact formula
\begin{equation}
 \Ex [Q_n(t)] = n + (n^2  - n)e^{-t}, \ 0 \le t < \infty . 
 \label{EQn}
  \end{equation}
Now we are studying the height $B_n$ of the branchpoint between the paths to two uniform random distinct leaves of $\CTCS(n)$.
The conditional probability that both sampled leaves are in clade $i$ at time $t$ equals
$\sfrac{1}{n(n-1)} X_n(i,t) (X_n(i,t)-1) $.  So
\begin{eqnarray*}
\Pr(B_n > t) &=&
\sfrac{1}{n(n-1)} \Ex[\sum_i  X_n(i,t) (X_n(i,t)-1) ]\\
&=& \sfrac{1}{n(n-1)} \Ex[ Q_n(t) - n] \\
&=& e^{-t} \mbox{ by } \eqref{EQn} .
\end{eqnarray*}

 \section{Proof of CLT for $D_n$ via weak convergence and the martingale CLT}
 \label{sec:Proof1}
 
%{\color{blue} Not edited from previous version.}
 
\subsection{The weak law of large numbers}
\label{sec:WLLN}
Assume we know the result $ \Ex [D_n] \sim \sfrac{6}{\pi^2}\log n $, which can be proved by a simple recurrence argument as in Section \ref{sec:recurse}.
Next we need the ``weak law"
\begin{Lemma}
\label{L:WLLN}
$\frac{D_n}{ \log n} \to_p 6/\pi^2 \mbox{ as } n \to \infty $.
\end{Lemma}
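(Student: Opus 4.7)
The plan is to reduce the WLLN directly to the LLN~\eqref{LLN} for the subordinator $Y$ via the coupling sketched in Section~\ref{sec:outline}; no variance computation is needed if the coupling is done carefully. Combined with Proposition~\ref{thm1} (which is already available in this section), this avoids any fresh recurrence analysis.

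Set $Z^{(n)}_t := \log n - \log X^{(n)}_t$. When $X^{(n)}_t = m$, the rate at which $Z^{(n)}$ jumps by at least $a$ equals $\sum_{j \ge \lceil m(1-e^{-a}) \rceil} 1/j$, which agrees with $\hat{\psi}_\infty[a,\infty)$ up to an $O(1/m)$ harmonic-sum-versus-log discrepancy. I would realise both $Z^{(n)}$ and $Y$ as functionals of a single Poisson random measure of intensity $\hat{f}_\infty(a)\,da\,dt$, with $Z^{(n)}$ accepting an atom $(a,s)$ iff its discrete image $j=\lceil X^{(n)}_{s-}(1-e^{-a})\rceil$ lies in the admissible range $\{1,\ldots, X^{(n)}_{s-}-1\}$. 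Splitting $D_n = T_{n,K} + R_{n,K}$ at the first entry into $\{1,\ldots,K\}$, the strong Markov property and Proposition~\ref{thm1} give $\Ex[R_{n,K}] = O(\log K)$, so for slowly growing $K = K(n)$ we have $R_{n,K}/\log n \to 0$ in probability. Meanwhile the coupling pins $T_{n,K}$ within $o_p(\log n)$ of the hitting time $\tau(\log(n/K)) := \inf\{t : Y_t \ge \log(n/K)\}$, and by~\eqref{LLN} $\tau(s)/s \to 1/\rho = 6/\pi^2$ in probability as $s \to \infty$. Combining, $D_n/\log n \to 6/\pi^2$ in probability.

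The main obstacle is quantifying the coupling error, i.e.\ showing that the cumulative $O(1/X^{(n)}_t)$ rate discrepancy between $Z^{(n)}$ and $Y$ translates to only $o_p(\log n)$ difference between $T_{n,K}$ and $\tau(\log(n/K))$. Concretely, this amounts to an $L^1$ bound of the form $\int_0^{T_{n,K}} dt/X^{(n)}_t = O(\log \log n)$, which can be obtained layer-by-layer using Proposition~\ref{thm1} after each dyadic threshold crossing. Alternatively --- and in the direction of Open Problem~\ref{OP:simplify} --- one might look for an exact (rather than approximate) coupling, perhaps by embedding $X^{(n)}$ into a time-changed version of $Y$, which would clean up the bookkeeping entirely.
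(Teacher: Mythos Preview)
Your overall strategy --- couple with the subordinator and invoke its LLN --- is the paper's approach, but the paper avoids the two-sided coupling you attempt. It proves only a \emph{lower} bound via coupling: after spreading the point masses of $\hat\psi_j$ into a density $g_j$ dominated (for $j>j(\eps)$) by a slightly inflated L\'evy density $f^\eps_\infty$, one obtains $\log X^{(n)}_t \ge \log n - Y^\eps_t$ while the right side exceeds $\log j(\eps)$, and the LLN for $Y^\eps$ yields $\liminf_n D_n/\log n \ge 1/\rho^\eps$ a.s.; letting $\eps\downarrow 0$ gives $\liminf_n D_n/\log n \ge 6/\pi^2$ a.s. The matching upper bound comes \emph{not} from coupling but from Proposition~\ref{thm1} directly: once $\Pr(D_n/\log n < 6/\pi^2 -\delta)\to 0$ for every $\delta>0$, the mean estimate $\Ex[D_n]\sim \tfrac{6}{\pi^2}\log n$ forces $\Pr(D_n/\log n > 6/\pi^2+\eps)\to 0$ as well (take $\delta\downarrow 0$ in the obvious Markov-type inequality). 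This entirely bypasses the $O(1/m)$ rate-discrepancy bookkeeping you identify as the main obstacle; your splitting $D_n=T_{n,K}+R_{n,K}$ is not needed.

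Your specific Poisson construction also has a defect as written. If ``accept $(a,s)$'' means $X$ jumps by $j=\lceil X_{s-}(1-e^{-a})\rceil$, then $\{j=1\}=\{0<a\le -\log(1-1/m)\}$, which carries \emph{infinite} $\hat\psi_\infty$-mass (since $\hat\psi_\infty[a,\infty)=-\log(1-e^{-a})\uparrow\infty$ as $a\downarrow 0$), so the $m\to m-1$ transition would fire at infinite rate instead of the correct rate~$1$. If instead $Z^{(n)}$ jumps by $a$ upon acceptance, the resulting process is just $Y$ with jumps exceeding $\log X_{s-}$ suppressed, which is not the law of $\log n-\log X^{(n)}$. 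More fundamentally, neither of $\hat\psi_m$, $\hat\psi_\infty$ dominates the other uniformly in $a$ (for instance $\hat\psi_2[\log 2,\infty)=1>\log 2=\hat\psi_\infty[\log 2,\infty)$, whereas $\hat\psi_m[a,\infty)\le h_{m-1}<\hat\psi_\infty[a,\infty)$ for $a$ small), so a single Poisson random measure cannot yield a clean two-sided sandwich. The paper's one-sided device --- replacing $\hat\psi_j$ by a dominating $\hat\nu_j$ with density below $f^\eps_\infty$ --- is precisely what makes the comparison tractable, at the cost of giving only the lower half of the WLLN.
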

This follows from the variance estimate in Theorem \ref{TvarD1}, or the weaker version found in \cite[Theorem 1.1]{beta1} by the recursion method.
Here now is our ``probability" proof of the CLT.
% which spells out the calculation behind the subordinator approximation.

\begin{proof}
First we repeat and expand upon the earlier discussion of ``the approximation calculation" in Section \ref{sec:approxc}.
The process  $\log \bX $ is itself Markov with transition rates described below.
A jump\footnote{Note these are {\em downward}  jumps, so take negative values.} of $\bX$ from $j$ to $j - i$ has height $-i$, which corresponds to a
jump of $\log \bX$ from $\log j$ having height  $\log (j-i)  - \log j =  \log (1 - i/j)$.
Define the measure $\chpsi_j$ on $(-\infty, 0)$ as the measure assigning weight $1/i$ to point $\log (1 - i/j)$, for each $1 \le i \le j-1$.
So this measure $\chpsi_j$ specifies the  heights and rates of the downward jumps of $\log \bX$ from $\log j$. 
Writing
\begin{align}
 \chpsi_j(-\infty, a] = \sum_{i = j (1-e^{a})}^{j-1} 1/i  
\end{align}
shows that there is a $j \to \infty$ limit measure in the sense
\begin{equation}
  \chpsi_j(-\infty, a]  \to  \chpsi_\infty(-\infty, a] \mbox{ as } j \to \infty, \  -\infty < a < 0  
  \label{mumu2}
  \end{equation}
where the limit $\sigma$-finite measure $\chpsi_\infty$ on $(-\infty,0)$  is the ``reflected" version of the measure $\psi_\infty$ on $(0,\infty)$ at (\ref{muinf}):
\begin{equation}
 \chpsi_\infty(-\infty, a] := - \log (1 - e^a), \quad
\chf_\infty(a) := \sfrac{e^a}{1 - e^a}, \ -\infty < a < 0  .
\label{phiphi2}
\end{equation}
In fact we use only a one-sided bound in (\ref{mumu2}), which we will get by coupling, in two stages.
We first define, for $j \ge 2$, a measure $\chnu_j$ on $(-\infty,0)$, whose total mass $h_{j-1}$ is the same as the total mass of $\chpsi_j$,
and where the reflected measures on $(0,\infty)$ satisfy the usual stochastic ordering 
$\psi_j \preceq \nu
_j$ on the line, that is to say
\[ \psi_j [0,b] \ge  \nu
_j[0,b], \ 0 < b < \infty , \quad j \ge 2. \]
To define $\chnu_j$ we simply take the mass $1/i$ of $\chpsi_j$ at point $\log (1 - i/j)$, for each $1 \le i \le j-1$,  and spread the mass over the interval $[\log (1 - (i+1)/j),  \ \log (1 - i/j)]$ with density proportional
to $\chf_\infty$.
This procedure gives a measure $\chnu_j$ with density
\[ \chg_j(u) = b_i
\chf_\infty (u), \quad u \in [ \log (1 - (i+1)/j),  \ \log (1 - i/j)], 
\quad 1\le i \le j-1 \]
on $-\infty < u < \log(1 - 1/j)$, 
and $\chg_j(u)=0$ on $\log(1 - 1/j)<u<0$,
where
\[ b_i := \frac{1}{i (\log (i+1) - \log i)}, \ i \ge 1 .\]
Clearly we have the stochastic ordering $\psi_j \preceq \nu
_j$ of the reflected measures.
Define a kernel density, for $a > 0$ and $u < 0$,
\[ \kappa(a,u) := \chg_j(u) \mbox{ on } \log (j -1) < a \le \log j,
\qquad j\ge2 ;\]
let also $\kappa(a,u)=0$ for $a\le 0$.
Now write $(Z^{(n)}_t, t \ge 0)$ for the decreasing Markov process on $(0,\infty)$, starting at $Z^{(n)}_0 =  \log n$, for which 
the  heights $u$ and rates $\kappa$ of the downward jumps from $a$ are given by $\kappa(a,u)$.
The stochastic ordering relation  $\psi_j \preceq \nu_j$  between the driving measures of the processes $\log \bX^{(n)}$ and $\bZ^{(n)}$,
together with the fact that 
$\log \bX^{(n)}$  
is stochastically monotone, imply that
we can couple the two processes so that 
\begin{equation}
 \log X^{(n)}_t \ge Z^{(n)}_t 
%\mbox{ while } Z^{(n)}_t  > 0 
. 
 \label{Xnt}
 \end{equation}
Now fix small $\eps > 0$ and define a density $\chf^\eps_\infty$ on $(-\infty, 0)$ by
\begin{eqnarray}
\chf^\eps_\infty(u) &=& 2 \chf_\infty(u),    \ - \eps < u < 0 \\
                          &=& (1 + \eps)  \chf_\infty(u), \ -\infty < u \le -\eps .
\end{eqnarray}
Because $2 > b_i \downarrow 1$ as $i \to \infty$, there exists $j(\eps)$ such that
\[ \chg_j \le \chf^\eps_\infty \mbox{ for all } j >j(\eps)  \]
and therefore 
\begin{equation}
 \kappa(a,u) \le \chf_\infty^\eps(u), \quad a \ge \log j(\eps) . 
 \label{kappabd}
 \end{equation}
 Now consider the subordinator $\bY^\eps$ with  L\'{e}vy density
 $f^\eps_\infty(u):=\chf^\eps_\infty(-u)$ on $(0,\infty)$.
 The inequality (\ref{kappabd}) implies that we can couple
 $\bZ^{(n)}$ and $\bY^\eps$ as
 \begin{equation} 
  Z^{(n)}_t \ge \log n - Y^\eps_t \mbox{ while }  \log n - Y^\eps_t \ge j(\eps) .
  \label{Znt}
  \end{equation}
  Now the strong law of large numbers for $\bY^\eps$ is
  \[ t^{-1} Y^\eps_t \to \rho^\eps := \int_0^\infty u f^\eps_\infty(u)  du .
  \]
 Combining this with (\ref{Xnt}, \ref{Znt}) and noting that
 $\rho^\eps \downarrow \rho = \pi^2/6$ by \eqref{psirho}, 
 it is straightforward to deduce
that, for the coupling used here,
   \[ 
 \liminf_n D_n/\log n \ge 6/\pi^2 \mbox{ a.s. }
 \]
 Together with the upper bound on $\Ex[D_n]$ from Theorem \ref{thm1},
 this implies that $D_n/\log n \to_p 6/\pi^2$. 
 \qed
\end{proof}

\subsection{The Gaussian limit}
\label{sec:Gaussian}
Recall that $D_n$ is the time that our size-biased chain $(X^{(n)}_t)$ is
absorbed at 1.
Recalling (\ref{approx}) and (\ref{LLN}), the first-order approximation for
$(X^{(n)}_t)$ is 
\[
\log X^{(n)}_t \approx  \log n -   \rho t, \quad 0 \le t \le \rho^{-1} \log n
\]
where $\rho =  \mu^{-1} = \pi^2/6$.
To study the second-order structure, we standardize as follows. 
Subtract the first order approximation, divide by $\sqrt{\log n}$  
(the desired order of the s.d.)\ and speed up by $\log n$ (the order of $\Ex[D_n]$).
So the standardized process is
\begin{equation}
\widetilde{S}^{(n)}_s := \frac{ \log X^{(n)}_{s \log n}  - \log n + \rho  s \log n}{\sqrt{\log n}}, \quad 0 \le s \le \rho^{-1}
\label{tildeS}
\end{equation}
and essentially we want to show this converges in distribution to Brownian motion.

The first step is that the rates (\ref{lambda-rates}) determine the
infinitesimal drift rate $a(j)$ and the variance rate $b(j)$ of $\log X_t$
when $X_t = j$, as follows.
\begin{equation}
a(j) := \sum_{1 \le i \le j-1} \frac{\log i - \log j}{j-i} ; \quad 
b(j) := \sum_{1 \le i \le j-1} \frac{(\log i - \log j)^2}{j-i}  .
\label{def:ab0}
\end{equation}
Approximating the sums by integrals, 
\begin{equation}
a(j) \to -  \rho \mbox{ and }  b(j) \to \int_0^1 \frac{\log^2 y}{1- y } \ dy 
=: 
\sigma^2 = 2 \zeta(3)
\mbox{ as } j \to \infty .
\label{def:ab}
\end{equation}
%The final equality above is a special case of \eqref{2.495} later.
We will need a bound on the former rate of convergence, but we do not need a bound for $b(j)$.
Applying  Euler's summation formula\footnote{Variants of this formula play a central role in the precise estimates in \cite{beta1}.} (Graham, Knuth, and Patashnik \cite{GKP}, (9.78)) for a smooth function $f$,
\[
\sum_{a\le i<b}\!f(i)\!=\!\int_a^b\!f(x)\,dx-\tfrac{1}{2}f(x)\Big|_{a}^{b}+\tfrac{1}{12}f'(x)\Big|_a^b+O\biggl(\int_a^b |f^{''}(x)|\,dx\!\biggr),
\]
to $f_j(x) = \frac{\log x - \log j}{j - x}$, 
one can show
\begin{equation}
|a(j) + \rho | = O(j^{-1} \log j) .
\label{aj}
\end{equation}

To start a proof of convergence, we need to stop the process before $X_t = O(1)$, so take the stopping time
\[
T_n : = \min\{t: \log X^{(n)}_t \le \log^{1/3} n \} 
\]
and replace \eqref{tildeS} by the stopped process
\[
S^{(n)}_s := \frac{ \log X^{(n)}_{\min(s \log n, T_n)}  - \log n + \rho  \min(s \log n,T_n)}{\sqrt{\log n}}, \quad 0 \le s < \infty .
\]
The central issue is to prove the following.
Write $(B_s, 0 \le s < \infty)$ for standard Brownian motion.
Recall $\mu = \rho^{-1} = 6/\pi^2$ and $\sigma^2 = 2 \zeta(3)$.

\begin{Proposition}
\label{PB}
$(S^{(n)}_s, 0 \le s <\infty) \to_d (\sigma B_{\min(s,\mu)}, 0 \le s < \infty)$
in the usual Skorokhod topology.
\end{Proposition}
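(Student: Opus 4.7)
The plan is to apply a martingale functional central limit theorem to the Markov pure-jump process $\log \bX^{(n)}$. By Dynkin's formula, I would decompose
$$\log X^{(n)}_t = \log n + M^{(n)}_t + \int_0^t a(X^{(n)}_u)\,du,$$
where $M^{(n)}$ is a local martingale whose predictable quadratic variation equals $\langle M^{(n)}\rangle_t = \int_0^t b(X^{(n)}_u)\,du$, with $a(j),b(j)$ the infinitesimal drift and variance rates defined at \eqref{def:ab0}. Substituting into the definition of $S^{(n)}_s$ gives
$$S^{(n)}_s = \frac{M^{(n)}_{s\log n \wedge T_n}}{\sqrt{\log n}} + \frac{1}{\sqrt{\log n}}\int_0^{s\log n \wedge T_n}\bigl(a(X^{(n)}_u) + \rho\bigr)\,du,$$
so the task reduces to (a) killing the drift-correction integral, and (b) identifying the Gaussian limit of the stopped, rescaled martingale.

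For (a), observe that by the definition of $T_n$ we have $X^{(n)}_u \ge \exp(\log^{1/3} n)$ on $[0,T_n]$, so the bound \eqref{aj} gives $|a(X^{(n)}_u) + \rho| = O(e^{-\log^{1/3} n}\log^{1/3} n)$; integrating against $du$ over a window of length $O(\log n)$ and dividing by $\sqrt{\log n}$ yields a term that vanishes uniformly in $s$.

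For (b), I would invoke the standard martingale FCLT (as in Ethier--Kurtz Chapter 7 or Jacod--Shiryaev Chapter VIII) applied to the stopped martingale $M^{(n)}_{\cdot\log n \wedge T_n}/\sqrt{\log n}$. Two hypotheses must be checked. The predictable QV convergence $(\log n)^{-1}\langle M^{(n)}\rangle_{s\log n\wedge T_n} \to \sigma^2\min(s,\mu)$ in probability follows because $b(j)\to\sigma^2$ uniformly on $[\exp(\log^{1/3}n),\infty)$, so the $b$-integral differs from $\sigma^2(s\log n\wedge T_n)$ by $o(\log n)$, and a time-change bound (discussed below) converts the stopped clock into $\min(s,\mu)$. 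The Lindeberg (large-jump) condition asks that for each $\eps>0$ the contribution to $\langle M^{(n)}\rangle$ from jumps of size $>\eps\sqrt{\log n}$ is $o_p(\log n)$; such jumps from state $j$ come from targets $i<j e^{-\eps\sqrt{\log n}}$, and the estimate $\sum_{i<j e^{-\eps\sqrt{\log n}}}(\log j-\log i)^2/(j-i) = O(\log n\cdot e^{-\eps\sqrt{\log n}})$ (via the substitution $x=j e^{-y}$) integrated in time and normalized by $\log n$ vanishes.

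The main obstacle is showing $T_n/\log n \to_p \mu$ with enough precision to upgrade convergence of $M^{(n)}_{s\log n}/\sqrt{\log n}$ to the stopped process $\sigma B_{\min(s,\mu)}$ in the Skorokhod $J_1$ topology. This would follow by bracketing $\log X^{(n)}_t$ between two subordinators in the spirit of the proof of Lemma \ref{L:WLLN}: the reflected lower coupling \eqref{Xnt}--\eqref{Znt} gives $T_n \ge (1-\eps)\mu\log n$ w.h.p., and a symmetric upper coupling obtained by thinning the L\'{e}vy density $f_\infty^\eps$ downward for large $j$ (handling the small-$j$ regime separately, which is absorbed into $o(\log n)$ by the choice of cutoff $\log^{1/3} n$) gives $T_n\le(1+\eps)\mu\log n$ w.h.p. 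Because the limit process $\sigma B_{\min(s,\mu)}$ is constant on $[\mu,\infty)$, the standard fact that stopping times converging in probability yield Skorokhod convergence of the stopped processes then completes the proof.
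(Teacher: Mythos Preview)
Your proposal is correct and follows essentially the same route as the paper: Doob--Meyer decompose $S^{(n)}$, kill the drift via the bound \eqref{aj}, and apply a martingale FCLT (the paper cites Helland rather than Ethier--Kurtz/Jacod--Shiryaev) after checking the quadratic-variation and Lindeberg conditions. The one place you work harder than necessary is the ``main obstacle'' $T_n/\log n \to_p \mu$: rather than building a two-sided subordinator sandwich, the paper simply combines Lemma~\ref{L:WLLN} (so $D_n/\log n \to_p \mu$) with the strong Markov estimate $\Ex[D_n - T_n] = O(\log^{1/3} n)$ at \eqref{EDT}, which follows immediately from $\Ex[D_m] \sim \mu\log m$ applied at the stopped state.
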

\noindent
Granted Proposition \ref{PB}, we proceed as follows.  Clearly 
$T_n \le D_n$
and from $\Ex [D_m] \sim \mu \log m$ we have
\begin{equation}
\Ex[D_n - T_n] = O(\log^{1/3} n).
\label{EDT}
\end{equation}
Combining this with Lemma \ref{L:WLLN}, that $D_n/\log n \to_p \mu$,
we have 
\begin{equation}
  \label{Tnlim}
T_n/\log n \to_p \mu .  
\end{equation}
From Proposition  \ref{PB} at $s = T_n/\log n$ we have
\[ S^{(n)}_{T_n/\log n} \to_d \sigma B_\mu =_d \mathrm{Normal}(0,\mu \sigma^2) \]
and then from the definition of $S^{(n)}_s$
\[
\frac{T_n - \mu \log n}{\mu \sqrt{\log n}} \to_d  \mathrm{Normal}(0,\mu \sigma^2).
 \] 
Using (\ref{EDT})  again, we can replace $T_n$ by $D_n$, and then rewrite as
\[ 
\frac{D_n - \mu \log n}{\sqrt{\log n}} \to_d  \mathrm{Normal}(0,\mu^3 \sigma^2)
\]
as in  Theorem \ref{TNorm}.

\bigskip
{\bf Proof of Proposition  \ref{PB}.} 
Recall the infinitesimal rates $a(j)$ and $b(j)$ at \eqref{def:ab}.
Consider the Doob-Meyer decomposition 
$S^{(n)} = A^{(n)} + M^{(n)}$
in which $A^{(n)}$ is a continuous process and $M^{(n)}$ is a martingale.
In this decomposition 
$S^{(n)}_0 = A^{(n)}_0 =  M^{(n)}_0 = 0$ and $A^{(n)}_t = \int_0^t dA^{(n)}_s$ and
one readily sees that
\[
dA^{(n)}_s = ( \log^{1/2} n) \ (a(X^{(n)}_{s \log n}) + \mu^{-1}) \ ds .
\]
Here and in what follows we need only consider  $s < T_n/\log n$.

The increasing process
$<M^{(n)}>_t$ associated with $M^{(n)}$, that is the continuous component of the Doob-Meyer decomposition of $(M^{(n)})^2$,
is
\begin{equation}
d <M^{(n)}>_s = b(X^{(n)}_{s \log n}) \ ds .
\label{def:inc}
\end{equation}
To prove  Proposition \ref{PB}, it will suffice to prove \\
(i)  $A^{(n)}$ converges to the zero process \\
(ii) $M^{(n)}$ converges to the stopped Brownian motion process $(\sigma B_{\min(s,\mu)}, 0 \le s < \infty)$.

 \medskip
 \noindent
For (i) it is enough to show
\begin{equation}
(\log^{1/2} n)  \ \int_0^{T_n/\log n} | a(X^{(n)}_{s \log n}) + \mu^{-1} | \ ds \to_p  0 \mbox{ as } n \to \infty 
\label{TaX}
\end{equation}
and (because 
$X^{(n)}_{s \log n} \ge \exp(\log^{1/3} n)$ on the interval of integration)
the bound 
$|a(j) + \mu^{-1}| = O(j^{-1} \log j) $
from (\ref{aj}) is,
together with \eqref{Tnlim},
more than sufficient to prove (\ref{TaX}).

By one version of the classical martingale CLT
(Helland \cite{helland} Theorem 5.1(a)),
to prove (ii) it suffices to show that for each $t < \mu$
\begin{equation}
<M^{(n)} >_t \to_p \sigma^2 t
\label{Mnt}
\end{equation}
\begin{equation}
\rho^\eps[M^{(n)}]_t := \sum_{u \le t}  |\Delta M^{(n)}(u)|^2 1_{\{  |\Delta M^{(n)}(u)| > \eps \}} \to_{L^1} 0
\label{Lind}
\end{equation}
where the sum is over jumps $\Delta M^{(n)}(u) := M^{(n)}(u) - M^{(n)}(u-)$.
In fact,
\cite[Theorem 5.1(a)]{helland} uses instead of \eqref{Lind} the assumption
that the compensator of $\rho^\eps[M^{(n)}]_t$ tends to 0 in probability for
each $t$; this is a weaker assumption, since an increasing process and its
compensator have the same expectation, and thus \eqref{Lind} implies
convergence of the compensator to 0 in $L^1$ and thus in probability.

Now (\ref{Mnt}) is immediate from (\ref{def:ab}) and (\ref{def:inc}).
To prove (\ref{Lind}), we require only very crude bounds.
The jumps of $M^{(n)}$ are the jumps of $S^{(n)}$ which are the jumps of $(\log^{-1/2} n) \log X^{(n)}$.
So $0 > \Delta M^{(n)}(u) \ge  - \log^{1/2} n$, and it suffices to show that for fixed $\eps > 0$, 
the number of large jumps satisfies
\[
(\log n ) \ \Ex [  | \{u \le T_n/\log n: \ \Delta M^{(n)}(u) \le - \eps \}| ] \to 0 .
\]
In other words, it suffices to show
\begin{equation}
(\log n ) \ \Ex [  | \{u\le T_n:  \log X^{(n)}_{u-} - \log X^{(n)}_{u} \ge  \eps  \log^{1/2} n\}| ] \to 0 .
\label{jumps}
\end{equation}
Now from the transition rates (\ref{lambda-rates}) for $X_t$, we have
\begin{quote}
for $1 \le i \le j/2$, the rate of jumps from $j$ to some $k \le i$  \\
equals $\sum_{k=1}^i 1/(j-k) \le 2i/j$.
\end{quote}
Jumps in (\ref{jumps}) are from some state $j$ to a state below $i$ where $i/j = \exp(- \eps  \log^{1/2} n)$,
and so (for large $n$) occur at rate at most $\alpha_n:= 2  \exp(- \eps  \log^{1/2} n)$.
So the expectation in (\ref{jumps}) is at most
$\Ex [T_n] \alpha_n \sim (\mu \log n )\alpha_n$.
Now $(\mu \log^2 n) \alpha_n \to 0$ as required to establish (\ref{jumps}).

\section{Length of $\CTCS(n)$: probability proof}
\label{sec:Clength2}
We re-state Proposition \ref{P:length}:
\[ \lim_n n^{-1}  \Ex [\Lambda_n] = \sfrac{6}{\pi^2}\]
where $\Lambda_n$ is the length of $\CTCS(n)$.

\begin{proof}
We need to justify the implicit interchange of limits in the argument in Section \ref{sec:Clength1}.
Of course Fatou's lemma 
and \eqref{ELn}--\eqref{iii}
tell us that
\begin{equation}
 \liminf n^{-1} \Ex[\Lambda_n] \ge 6/\pi^2.
 \label{fatou}
 \end{equation}
We will use several pieces of previous theory.
In the context of the consistency property, Figure  \ref{Fig:dpo} illustrated the ``delete and prune" 
operation.  Deletion of each possible type of leaf ($a,b,c$ in the Figure) decreases the number of edges by $1$,
but only (b) and (c) reduce the length of the tree.
In fact  in the inductive construction, essentially the inverse
of the ``cut and prune" operation,  at each step the total length is either unchanged 
or is increased by an Exponential(1) amount.  So in particular
\begin{equation}
 \Ex [\Lambda_n] \le \Ex [\Lambda_{n+1}] \le \Ex [\Lambda_n] + 1 . 
 \label{ELn2}
 \end{equation}
 We need a fact from the analysis of the HD chain in  \cite{HDchain}.
 For $n \ge 2$
\begin{equation}
a(i) =  \sum_{j=i}^n  \hat{b}_n(j)  a(j,i), \ i \le n 
\label{bdec3}
\end{equation}
where, with $q^*(m,j)$ from \eqref{qstar},
\[ \hat{b}_n(j) := \sum_{m>n} a(m)q^*(m,j), \ 1 \le j \le n, \]
is the overshoot distribution, that is the distribution of the state where the chain enters $[[1,n]]$.
Dividing \eqref{bdec3} by $i h_{i-1}$ and summing over $i$
\[
\sum_{i=2}^n  \sum_{j=i}^n  \hat{b}_n(j)  \frac{a(j,i)}{ih_{i-1}} 
= \sum_{i =2}^n  \frac{ a(i)}{i h_{i-1}} 
\]
and then from \eqref{ELn} for $\Ex [\Lambda_j]$ and the summation at \eqref{iii}
\begin{equation}
 \sum_{j=2}^n  \hat{b}_n(j) \frac{\Ex [\Lambda_j]}{j} = \frac{6}{\pi^2} (1 - \frac{1}{n}) .
\label{bnj}
\end{equation}
As a final ingredient, the overshoot distribution $\hat{b}_n :=
\mathrm{dist}(V_n)$ has a scaling limit%
\footnote{Explicitly, $V$ has density 
$f_V(v) = 6 \pi^{-2} \int_1^\infty  \frac{1}{x(x-v)} dx 
= 6\pi^{-2}\,\frac{-\log(1-v)}{v} $.}
$n^{-1}V_n \to_d V$ where $V$ has support $[0,1]$.

To complete a proof by contradiction, suppose
\[  \limsup n^{-1} \Ex[\Lambda_n] > 6/\pi^2.\]
Then, using \eqref{ELn2}, there exist $\eps > 0$ and infinitely many $n_0$ such that 
$j^{-1} \Ex[\Lambda_j] \ge 6/\pi^2 + \eps$
for all $n_0(1-\eps) \le j \le n_0$. 
But this and \eqref{fatou} and the scaling limit for $\hat{b}_n$ imply
\[ \limsup_{n}  \sum_{j=2}^n  \hat{b}_n(j) \frac{\Ex [\Lambda_j]}{j} \ge \frac{6}{\pi^2} + \eps \Pr(1-\eps < V < 1) > \frac{6}{\pi^2} \]
contradicting \eqref{bnj}.
\qed
\end{proof}

\section{A hidden symmetry?}
\label{sec:coincide}

%{\color{blue} Copied from previous version.}

From Proposition \ref{P:length} and \eqref{talimit} we see that
$\ell := \lim_n n^{-1} \Ex [\Lambda_n]$
and $a(2)$ are both equal to $6/\pi^2$.
There are two different implications of ``$a(2) = \ell$".
First, it implies that (asymptotically) exactly half of the total length is in the ``terminal" edges to a bud-pair.
Second,
in the inductive construction we expect that as $n \to \infty$ there are limit probabilities 
for the three types of placement of the new bud:
\begin{itemize}
\item $p^\uparrow$ is the probability of a  branch extension
\item $p^\to$ is the probability of a side-bud addition
\item $p^\nearrow$   is the probability of a side-bud extension.
\end{itemize}
Now observe
\[  p^\uparrow + p^\nearrow = \ell \]
because these are the cases where the tree length increases by a mean length $1$.
And
\[ 2p^\nearrow = a(2) \]
because this is the only case where the number of buds in pairs increases, by $2$. 

So the assertion $\ell = a(2)$ is equivalent to the assertion $p^\uparrow = p^\nearrow$.
\begin{OP}
\label{OP:symmetry}
Is the fact  $p^\uparrow = p^\nearrow$ a consequence of some kind of symmetry  for the shape of the tree?
\end{OP}

\paragraph{A variance heuristic.}
Assuming the limit probabilities above exist, then in the
 inductive construction we are adding an edge of Exponential (1) length in a proportion $\ell$ of the steps, which strongly 
suggests  $\var(\Lambda_n) \sim \ell n= \frac{6}{\pi^2} n$,
as mentioned in Open Problem \ref{OP:varL}.

\section{List of Open Problems}
\label{sec:OPlist}

%{\bf Open Problem} \ref{OP:Bn}.
%Determine the asymptotic behavior of $B_2(n)$. 
%Section \ref{sec:balance}).

%\noindent
%{\bf Open Problem}  \ref{OP:bC}.
%Improve the bounds on $B_2(n)$ in Corollary \ref{C:B2}.
%Is there some ``obvious" proof of the upper bound?
%(Section \ref{sec:BB2}).

\noindent
{\bf Open Problem}  \ref{OP:alpha}.
Prove that, for  $\frac{\log r_n}{\log n} \to \alpha \in [0,1]$, we have (for correlation between heights of leaves at distance $r_n$ apart)
\[
\rho(n,r_n) \to 1 - \alpha . %\label{rhon}
\]
(Section \ref{sec:depends}).

\noindent 
{\bf Open Problem}  \ref{OP:original}.
In the original interval-splitting model, analyze the distribution of the height of the 
leaf $i(n)$ in $\DTCS(n)$ and $\CTCS(n)$.
(Section \ref{sec:depends}).

\noindent
{\bf Open Problem} \ref{OPmax}. 
Show that the height of $\CTCS(n)$ satisfies $D^*_n \sim c \log n$ in probability, and identify the constant $c$.
(Section \ref {sec:leaf-ht}).

\noindent
{\bf Open Problem} \ref{OPmax2}.
Show that the height of $\DTCS(n)$ satisfies $L^*_n \sim c \log^2 n$ in probability, and identify the constant $c$.
(Section \ref{sec:htDT}).

%\noindent
%{\bf Open Problem} 
 % \ref{OPexplicit}.
  %Find explicit bounds for $|a(n,i) - a(i)|$. 
  %(Section \ref{sec:obstacle}).

\noindent
{\bf Open Problem}  \ref{OP:varNn}.
Prove that $n^{-1} \var(N_n(\chi))$ converges to 
some limit $\sigma^2(\chi)$ and that the corresponding CLT holds.
(Section \ref{sec:propfringe}).

\noindent
{\bf Open Problem} \ref{OP:varL}. 
Prove that $n^{-1} \var(\Lambda_n)$ converges to $6/\pi^2$ and that the corresponding CLT holds.
(Section \ref {sec:Clength1}).

\noindent
{\bf Open Problem} \ref{OP:Nnchi}.   
Study combinatorial properties of fringe clades, for instance
\mbox{ } 
\begin{itemize}
 \item The number $K_n := \sum_\chi 1_{(N_n(\chi) \ge 1)}$ of different-shape clades within (a realization of) $\DTCS(n)$.
\item The largest clade that appears more than once within $\DTCS(n)$.
\item The smallest clade that does not appear within $\DTCS(n)$.
  \end{itemize}
(Section \ref{sec:combin}).
  
  \noindent
{\bf Open Problem}  \ref{OP:an}.   
Find explicit bounds for $|a(n,m) - a(m)|$.
In particular, prove the following Ansatz.
(Section \ref{sec:obstacle}).
\\
{\em For a non-negative sequence $(f(j), j \ge 2)$ such that $f(j) = O(j^k)$ for some $k < \infty$:

\noindent
(i) If $\sum_{i = 2}^\infty a(i)f(i) < \infty$ then $\sum_{i = 2}^n a(n,i)f(i) \to \sum_{i = 2}^\infty a(i)f(i)$.

\noindent
(ii) If $\sum_{i = 2}^\infty a(i)f(i) = \infty$ then $\sum_{i = 2}^n a(n,i)f(i) \sim \sum_{i = 2}^n a(i)f(i)$.
}

\noindent
{\bf Open Problem} \ref{OP:tree_bal}. 
Write $N^{(n)}_m$ for the number of size-$m$ clades in $\DTCS(n)$.
Study the joint distribution of $(N^{(n)}_m, 2 \le m \le n)$ in such a way that one can calculate covariances and deduce CLTs.
(Section \ref{sec:tree_balance}).

\noindent
{\bf Open Problem} \ref{OP:indices}. 
Study the distribution of these and other indices for $\DTCS(n)$ in more detail.
(Section \ref{sec:rQI}).

\noindent
{\bf Open Problem} \ref{OP:more_data}.  
Repeat the data studies of empirical fringe distributions on a larger scale.
(Section \ref{sec:rQI}).

\noindent
{\bf Open Problem} 
\ref{OP:secondorder}
If we know that, for a given functional $\Phi$, the CLT holds for the fringe process
$ \sum_{i = 0}^{m-1} \Phi(\TT_i) $, does the CLT necessarily also hold for $\DTCS(n)$, that is for
$\sum_{i = 1}^{n} \Phi(\TT^{n}_i)$? 
(Section \ref{sec:fringeterm}).

\noindent
{\bf Open Problem}  \ref{OP:martingale}.   
Is there a useful martingale associated with the inductive construction?
(Section \ref{sec:exploitgr}).

%\noindent
%{\bf Open Problem } \ref{OP:study_more}.
%Study the distribution of other indices for $\DTCS(n)$.

\noindent
{\bf Open Problem} \ref{OP:overL}. 
Prove that $\overline{L}_n$ grows roughly like $n \log^4 n$.
(Section \ref {sec:drawn2}).

\noindent
{\bf Open Problem} \ref{OP:drawn_width}.  
What can we say about the {\bf drawn width profile process} $(W(h), h \ge 0)$ for $\DTCS(n)$, for the number $W(h)$ of vertical lines that cross an interval $(h,h+1)$, 
that is  the number of clades with height $\le h$ that arise as a split of a clade with height $\ge h+1$?
(Section \ref{sec:drawn2}).

\noindent
{\bf Open Problem} \ref{OP:SS}.  
Give a detailed analysis of powers $\SS_n^{(\alpha)}$ in our model.
(Section \ref{sec:powers}).

\noindent
{\bf Open Problem} \ref{OP:rescale_fringe}.    
Study the (point process) scaling limit of branchpoints in the fringe  process.
(Section \ref{sec:CRT}).

\noindent
{\bf Open Problem} \ref{OP:universal}.  
Are there superficially different discrete models whose limit is the same $\CTCS(\infty)$?
(Section \ref{sec:CRT}).

\noindent
{\bf Open Problem} \ref{OP:coalesce}.
Elucidate the precise connections between $\CTCS(\infty)$ and the $beta(2,1)$ coalescent.
(Section \ref{sec:coalescent}).

\noindent
{\bf Open Problem} \ref{OP:symmetry}.  
Is the fact  $p^\uparrow = p^\nearrow$ a consequence of some kind of symmetry  for the shape of the tree?
(Appendix\SJ \ref{sec:coincide}).

  \end{appendices}

% \newpage
% \bibliographystyle{plain}
% \bibliography{CC}

 \end{document}